\providecommand{\tabularnewline}{\\}
\numberwithin{equation}{section}
\numberwithin{figure}{section}
\theoremstyle{plain}
\newtheorem{thm}{\protect\theoremname}[section]
  \theoremstyle{plain}
  \newtheorem{prop}[thm]{\protect\propositionname}
  \theoremstyle{plain}
  \newtheorem{lem}[thm]{\protect\lemmaname}
  \theoremstyle{plain}
  \newtheorem{cor}[thm]{\protect\corollaryname}
\numberwithin{equation}{section} 
\newcommand{\ddt}[1]{\frac{\partial}{\partial t}#1}
\newcommand{\inp}[2][]{\left(#1, #2\right)}
\newcommand{\gnp}[2][]{\langle#1, #2\rangle}
\def\<{\langle}
\def\>{\rangle}
\def\Tc{\mathcal{T}}
\def\X{\mathbb{X}}
\def\W{\mathbb{Q}}
\def\R{\mathbb{R}}
\def\M{\mathbb{M}}
\def\S{\mathbb{S}}
\def\N{\mathbb{N}}
\def\a{\alpha}
\def\s{\sigma}
\def\t{\tau}
\def\g{\gamma}
\def\l{\lambda}
\def\lah{\lambda_h}
\def\m{\mu}
\def\Pit{\tilde{\Pi}}
\def\r{\mathbf{r}}
\def\tr{\operatorname{tr}}
\def\dvr{\operatorname{div}}  
\def\dt{\partial_t}
\def\Dt{\Delta t}
\def\O{\Omega}
\def\G{\Gamma}
\def\Gn{\Gamma_N}
\def\Gd{\Gamma_D}
\def\dO{{\partial\Omega}}
\def\l2o{{L^{2}(\Omega)}}
\def\K{{K^{-1}}}
\def\ud{\dot{u}}
\newtheorem{remark}{Remark}[section]
\def\ss{\sigma_{h,i}^{*,n+1}}
\def\zs{z_{h,i}^{*,n+1}}
\def\ps{p_{h,i}^{*,n+1}}
\def\ssj{\sigma_{h,j}^{*,n+1}}
\def\psj{p_{h,j}^{*,n+1}}
\def\sb{\bar{\sigma}_{h,i}^{n+1}}
\def\zb{\bar{z}_{h,i}^{n+1}}
\def\pb{\bar{p}_{h,i}^{n+1}}
\def\usd{\dot{u}_{h,i}^{*,n+1}}
\def\gsd{\dot{\g}_{h,i}^{*,n+1}}
\def\ubd{\bar{\dot{u}}_{h,i}^{n+1}}
\def\gbd{\bar{\dot{\g}}_{h,i}^{n+1}}
  \providecommand{\corollaryname}{Corollary}
  \providecommand{\lemmaname}{Lemma}
  \providecommand{\propositionname}{Proposition}
\providecommand{\theoremname}{Theorem}
\begin{document}

\title{Domain decomposition and partitioning methods for mixed finite element
discretizations of the Biot system of poroelasticity}

\author{Manu Jayadharan
  \thanks{Department of Mathematics, University of
    Pittsburgh, Pittsburgh, PA 15260, USA;~{\tt \{manu.jayadharan@pitt.edu, elk58@pitt.edu, yotov@math.pitt.edu\}}. Partially supported by NSF grant DMS 1818775.}
    \and Eldar Khattatov\footnotemark[1]
    \and Ivan Yotov\footnotemark[1]}
  
\maketitle
\begin{abstract}

We develop non-overlapping domain decomposition methods for the Biot
system of poroelasticity in a mixed form. The solid deformation is
modeled with a mixed three-field formulation with weak stress
symmetry. The fluid flow is modeled with a mixed Darcy formulation. We
introduce displacement and pressure Lagrange multipliers on the
subdomain interfaces to impose weakly continuity of normal stress and
normal velocity, respectively. The global problem is reduced to an
interface problem for the Lagrange multipliers, which is solved by a
Krylov space iterative method. We study both monolithic and split
methods.  In the monolithic method, a coupled displacement-pressure
interface problem is solved, with each iteration requiring the
solution of local Biot problems. We show that the resulting interface
operator is positive definite and analyze the convergence of the
iteration. We further study drained split and fixed stress Biot
splittings, in which case we solve separate interface problems
requiring elasticity and Darcy solves.  We analyze the
stability of the split formulations. Numerical experiments are
presented to illustrate the convergence of the domain decomposition
methods and compare their accuracy and efficiency.
\end{abstract}

\medskip
\noindent
{\bf Keywords:} domain decomposition, poroelasticity, Biot system, splitting methods, mixed finite elements

\section{Introduction}

In this paper we study several non-overlapping domain decomposition 
methods for the Biot system of poroelasticity \cite{biot1941general},
which models the flow of a viscous fluid through a poroelastic medium
along with the deformation of the medium. Such flow occurs in many
geophysics phenomena like earthquakes, landslides, and flow of oil
inside mineral rocks and plays a key role in engineering applications
such as hydrocarbon extraction through hydraulic or thermal
fracturing. We use the classical Biot system of poroelasticity with
the quasi-static assumption, which is particularly relevant in
geoscience applications. The model consists of an equilibrium equation
for the solid medium and a mass balance equation for the flow of the
fluid through the medium. The system is fully coupled, with the fluid
pressure contributing to the solid stress and the divergence of the
solid displacement affecting the fluid content.

The numerical solution of the Biot system has been extensively studied
in the literature. Various formulations have been considered,
including two-field displacement--pressure formulations
\cite{Gaspar-FD-Biot,murad1992improved,Zik-MINI}, three-field
displacement--pressure--Darcy velocity formulations
\cite{phillips2007coupling1,Zik-stab,Yi-Biot-locking,
  Zik-nonconf,Lee-Biot-three-field,Yi-Biot-nonconf,phillips-DG,Whe-Xue-Yot-Biot}, and
three-field displacement--pressure--total pressure formulations
\cite{Lee-Mardal-Winther,ORB}. More recently, fully-mixed formulations
of the Biot system have been studied. In \cite{Yi-Biot-mixed}, a
four-field stress--displacement--pressure--Darcy velocity mixed
formulation is developed. A posteriori error estimates for
this formulation are obtained in \cite{Ahmed-apost-CMAME}.
In \cite{Lee-Biot-five-field}, a weakly
symmetric stress--displacement--rotation elasticity formulation is
considered, which is coupled with a mixed pressure--Darcy velocity
flow formulation. Fully-mixed finite element approximations carry the
advantages of local mass and momentum conservation, direct computation
of the fluid velocity and the solid stress, as well as robustness and
locking-free properties with respect to the physical parameters. Mixed
finite element (MFE) methods can also handle discontinuous full tensor
permeabilities and Lam\'{e} coefficients that are typical in
subsurface flows. In our work we focus on the five-field
weak-stress-symmetry formulation from \cite{Lee-Biot-five-field},
since weakly symmetric MFE methods for elasticity allow for reduced
number of degrees of freedom. Moreover, a multipoint
stress--multipoint flux mixed finite element approximation for this
formulation has been recently developed in \cite{msfmfe-Biot}, which
can be reduced to a positive definite cell-centered scheme for
pressure and displacement only, see also a related finite volume
method in \cite{Jan-SINUM-Biot}. While our domain decomposition
methods are developed for the weakly symmetric formulation from
\cite{Lee-Biot-five-field}, the analysis carries over in a
straightforward way to the strongly symmetric formulation from
\cite{Yi-Biot-mixed}.

Discretizations of the Biot system of poroelasticity for practical
applications typically result in large algebraic systems of
equations. The efficient solution of these systems is critical for the
ability of the numerical method to provide the desired resolution. In
this work we focus on non-overlapping domain decomposition methods
\cite{Toselli-Widlund,QV}. These methods split the computational
domain into multiple non-overlapping subdomains with algebraic systems
of lower complexity that are easier to solve.  A global problem
enforcing appropriate interface conditions is solved iteratively to
recover the global solution. This approach naturally leads to scalable
parallel algorithms. Despite the abundance of works on discretizations
of the Biot system, there have been very few results on domain
decomposition methods for this problem.  In \cite{girault2011domain},
a domain decomposition method using mortar elements for coupling the
poroelastic model with an elastic model in an adjacent region is
presented. In that work, the Biot region is not decomposed into
subdomains. In \cite{HoracioNurbs1,FlorezDD}, an iterative coupling
method is employed for a two-field displacement--pressure formulation,
and classical domain decomposition techniques are applied separately
for the elasticity and flow equations. A monolithic domain
decomposition method for the two-field formulation of poroelasticity
combining primal and dual variables is developed in
\cite{GosseletMonoDD}. To the best of our knowledge, domain
decomposition methods for mixed formulations of poroelasticity have
not been studied in the literature.

In this paper we study both monolithic and split non-overlapping
domain decomposition methods for the five-field fully mixed
formulation of poroelasticity with weak stress symmetry from
\cite{Lee-Biot-five-field,msfmfe-Biot}. Monolithic methods require
solving the coupled Biot system, while split methods only require
solving elasticity and flow problems separately. Both methods have
advantages and disadvantages. Monolithic methods involve solving
larger and possibly ill-conditioned algebraic systems, but may be
better suitable for problems with strong coupling between flow and
mechanics, in which case split or iterative coupling methods may
suffer from stability or convergence issues and require sufficiently
small time steps. Our methods are motivated by the non-overlapping
domain decomposition methods for MFE discretizations of Darcy flow
developed in
\cite{glowinski1988domain,cowsar1995balancing,arbogast2000mixed} and
the non-overlapping domain decomposition methods for MFE
discretizations of elasticity developed recently in
\cite{eldar_elastdd}.

In the first part of the paper we develop a monolithic domain
decomposition method.  We employ a physically heterogeneous Lagrange
multiplier vector consisting of displacement and pressure variables to
impose weakly the continuity of the normal components of stress and
velocity, respectively. The algorithm involves solving at each time
step an interface problem for this Lagrange multiplier vector. We show
that the interface operator is positive definite, although it is not
symmetric in general. As a result, a Krylov space solver such as GMRES
can be employed for the solution of the interface problem. Each
iteration requires solving monolithic Biot subdomain problems with
specified Dirichlet data on the interfaces, which can be done in
parallel.  We establish lower and upper bounds on the spectrum of
the interface operator, which allows us to perform analysis of the
convergence of the GMRES iteration using field-of-values estimates.

In the second part of the paper we study split domain decomposition
methods for the Biot system. Split or iterative coupling methods for
poroelasticity have been extensively studied due to their 
computational efficiency. Four widely used sequential methods are
drained split, undrained split, fixed stress split, and fixed strain
split. Decoupling methods are prone to stability issues and a detailed
stability analysis of the aforementioned schemes using finite volume
methods can be found in \cite{KimDS2011,KimFS2011}, see also
\cite{bukacSplit} for stability analysis of several split methods
using displacement--pressure finite element discretizations.
Iterative coupling methods are based on similar splittings and involve
iterating between the two sub-systems until convergence. Convergence
for non-mixed finite element methods is analyzed in
\cite{Mikelic2013}, while convergence for a four-field mixed finite
element discretization is studied in \cite{YiSplit}. An accelerated
fixed stress splitting scheme for a generalized non-linear
consolidation of unsaturated porous medium is studied in
\cite{RaduAnderson}. Studies of the optimization and acceleration of
the fixed stress decoupling method for the Biot consolidation model,
including techniques such as multirate or adaptive time stepping and
parallel-in-time splittings have been presented in
\cite{Ahmed-FS-JCAM,RaduFS1,RaduFS2,RaduFS3,Almani-multirate}.

In our work we consider drained split (DS) and fixed stress (FS)
decoupling methods in conjunction with non-overlapping domain
decomposition. In particular, at each time step we solve sequentially
an elasticity and a flow problem in the case of DS or a flow and an
elasticity problem in the case of FS splitting. We perform stability
analysis for the two splittings using energy estimates and show that they
are both unconditionally stable with respect to the time step and the
physical parameters. We then employ separate non-overlapping domain
decomposition methods for each of the decoupled problems, using the
methods developed in \cite{glowinski1988domain,arbogast2000mixed} for
flow and \cite{eldar_elastdd} for mechanics. 

In the numerical section we present several computational experiments
designed to verify and compare the accuracy, stability, and
computational efficiency of the three domain decomposition methods for
the Biot system of poroelasticity. In particular, we study the
discretization error and the number of interface iterations, as well
as the effect of the number of subdomains. We also illustrate the
performance of the methods for a physically realistic heterogeneous
problem with data taken from the Society of Petroleum Engineers 
10th Comparative Solution Project.

The rest of the paper is organized as follows. In
Section~\ref{sec:model} we introduce the mathematical model and its
MFE discretization.  The monolithic domain decomposition method is
developed and analyzed in
Section~\ref{subsec:Monolithic-MFE-Domain}. In Section~\ref{sec:split}
we perform stability analysis of the DS and FS decoupling methods and
present the DS and FS domain decomposition methods. The
numerical experiments are presented in Section~\ref{sec:numer},
followed by conclusions in Section~\ref{sec:concl}.

\section{Model problem and its MFE discretization}\label{sec:model}

Let $\Omega\subset\mathbb{R}^{d}$, $d=2,3$ be a simply connected
domain occupied by a linearly elastic porous body. We use the notation
$\M$, $\S$, and $\N$ for the spaces of $\ensuremath{d\times d}$
matrices, symmetric matrices, and skew-symmetric matrices respectively,
all over the field of real numbers, respectively. Throughout the paper,
the divergence operator is the usual divergence for vector fields,
which produces vector field when applied to matrix field by taking
the divergence of each row. For a domain $G
\subset \R^d$, the $L^2(G)$ inner product and norm for scalar, vector,
or tensor valued functions are denoted $\inp[\cdot]{\cdot}_G$ and
$\|\cdot\|_G$, respectively. The norms and seminorms of the Hilbert
spaces $H^k(G)$ are denoted by $\|\cdot\|_{k,G}$ and $| \cdot
|_{k,G}$, respectively. We omit $G$ in the subscript if $G = \O$. For
a section of the domain or element boundary $S \subset \R^{d-1}$ we
write $\gnp[\cdot]{\cdot}_S$ and $\|\cdot\|_S$ for the $L^2(S)$ inner
product (or duality pairing) and norm, respectively. We will also use
the spaces
\begin{align*}
	&H(\dvr; \O) = \{q \in L^2(\O, \R^d) : \dvr q \in L^2(\O)\}, \\
	&H(\dvr; \O, \M) = \{\t \in L^2(\O, \M) : \dvr \t \in L^2(\O,\R^d)\},
\end{align*}    
equipped with the norm
$$
\|\t\|_{\dvr} = \left( \|\t\|^2 + \|\dvr\t\|^2 \right)^{1/2}.
$$
The partial derivative operator with respect to time, $\frac{\partial
}{\partial t}$, is often abbreviated to $\partial_t$.  Finally, $C$
denotes a generic positive constant that is independent of the
discretization parameters $h$ and $\Delta t$.

Given a vector field $f$ representing body
forces and a source term $g$, the quasi-static Biot system of poroelasticity is
\cite{biot1941general}:
\begin{align}
-\dvr\s(u) & =f, & \text{in \ensuremath{\Omega \times (0,T]}},\label{biot-1}\\
\K z+\nabla p & =0, & \text{in \ensuremath{\Omega \times (0,T]}},\label{biot-2}\\
\ddt(c_{0}p+\a\dvr u)+\dvr z & =g, & \text{in \ensuremath{\Omega \times (0,T]}},\label{biot-3}
\end{align}
where $u$ is the displacement, $p$ is the fluid pressure, $z$ is the 
Darcy velocity, and $\sigma$ is the poroelastic stress, defined as
\begin{equation}\label{stress-comb}
\sigma = \sigma_e - \alpha p I.
\end{equation}
Here $I$ is the $d\times d$ identity matrix, $0 < \alpha \le 1$ is the
Biot-Willis constant, and $\sigma_e$ is the elastic stress satisfying the
stress-strain relationship
\begin{equation}\label{stress-strain}
A\s_e = \epsilon(u), \quad \epsilon(u):=\frac{1}{2}\left(\nabla u+(\nabla u)^{T}\right),
\end{equation}
where $A$ is the compliance tensor, which is a symmetric,
bounded and uniformly positive definite linear operator acting from
$\ensuremath{\S\to\S}$, extendible to $\M\to\M$. In the special case of homogeneous
and isotropic body, $A$ is given by,
\begin{equation}
A\sigma=\frac{1}{2\mu}\left(\sigma-\frac{\lambda}{2\mu+d\lambda}\operatorname{tr}(\sigma)I\right),\label{A-defn}
\end{equation}
where $\mu>0$ and $\lambda\ge0$
are the Lam\'e coefficients. In this case, $\sigma_{e}(u)=2\mu\epsilon(u)+\lambda\dvr u\,I$.
Finally, $K$ stands for the permeability
tensor, which is symmetric, bounded, and uniformly positive definite, and
$c_{0} \ge 0$ is the mass storativity.
To close the system, we impose the boundary conditions 
\begin{align}
  u & =g_{u} & \text{on \ensuremath{\Gd^u \times (0,T]}} & , & \s n & =0
    & \text{on \ensuremath{\Gn^\sigma  \times (0,T]}},\label{biot-bc-1}\\
      p & =g_{p} & \text{on \ensuremath{\Gd^p \times (0,T]}} & , & z\cdot n
        & =0 & \text{on \ensuremath{\Gn^z \times (0,T]}},\label{biot-bc-2}
\end{align}
where $\ensuremath{\Gd^u\cup\Gn^\sigma=\Gd^p\cup\Gn^z=\partial\O}$ and 
$n$ is the outward unit normal vector field on $\partial \O$,
along with the initial condition $p(x,0)=p_0(x)$ in $\Omega$.
Compatible initial data for the rest of the variables can be obtained from
($\ref{biot-1}$) and ($\ref{biot-2}$) at $t=0$. 
Well posedness analysis for this system can be found in \cite{SHOWALTER2000310}.

We consider a mixed variational formulation for
($\ref{biot-1}$)--($\ref{biot-bc-2}$) with weak stress symmetry,
following the approach in \cite{Lee-Biot-five-field}. The motivation
is that MFE elasticity spaces with weakly symmetric stress tend to
have fewer degrees of freedom than strongly symmetric MFE
spaces. Moreover, in a recent work, a multipoint stress--multipoint
flux mixed finite element method has been developed for this
formulation that reduces to a positive definite cell-centered scheme
for pressure and displacement only \cite{msfmfe-Biot}. Nevertheless,
the domain decomposition methods in this paper can be employed for
strongly symmetric stress formulations, with the analysis carrying
over in a straightforward way. We introduce a rotation 
Lagrange multiplier $\gamma :=\frac{1}{2}\left(\nabla u - \nabla u^T\right) \in \N$,
which is used to impose weakly symmetry of the stress tensor $\s$. We rewrite 
(\ref{stress-strain}) as
\begin{equation}
A\left( \s + \alpha p I \right)=\nabla u - \gamma \label{stress-strain-non-sym}.
\end{equation}
Combining (\ref{stress-strain}) and (\ref{stress-comb}) gives
$\dvr u = \tr(\epsilon(u)) = \tr(A\sigma_e) = 
\tr A(\s + \alpha p I)$, which can be used to rewrite (\ref{biot-3}) as
\begin{equation}
    \dt(c_{0}p+\a \tr A\left(\s + \alpha p I\right))+\dvr z  =g. \label{biot-3-new} 
\end{equation}
The combination of (\ref{stress-strain-non-sym}), (\ref{biot-1}), (\ref{biot-2}), and
(\ref{biot-3-new}), along with the boundary conditions
\eqref{biot-bc-1}--\eqref{biot-bc-2},
leads to the  variational formulation:
find $\ensuremath{(\sigma,u,\gamma,z,p):[0,T]\to\X\times V\times\W\times Z\times W}$
such that $p(0)=p_0$ and for a.e. $t\in (0,T)$,
\begin{align}
 & \inp[A(\sigma + \a p I)]{\tau} + \inp[u]{\dvr{\tau}}+\inp[\gamma]{\tau}=\gnp[g_{u}]{\t\,n}_{\Gamma_{D}^u}, & \forall\tau\in\X,\label{eq:cts1}\\
 & \inp[\dvr{\sigma}]{v}=-\inp[f]{v}, & \forall v\in V,\label{eq:cts2}\\
 & \inp[\sigma]{\xi}=0, & \forall\xi\in\W,\label{eq:cts3}\\
 & \inp[\K z]{q}-\inp[p]{\dvr{q}}=-\gnp[g_{p}]{q\cdot n}_{\Gamma_{D}^p}, & \forall q\in Z,\label{eq:cts4}\\
  & c_{0}\inp[\dt{p}]{w}+\a\inp[\dt A(\sigma + \a pI)]{wI}
  +\inp[\dvr{z}]{w}=\inp[g]{w}, & \forall w\in W,\label{eq:cts5}
\end{align}
where
\begin{align*}
  &\X = \big\{ \t\in H (\dvr;\Omega,\M) : \t\,n = 0 \text{ on } \Gn^\sigma  \big\},
  \quad V = L^2 (\Omega, \R^d), \quad \W = L^2 (\Omega, \N), \\
  &Z = \big\{ q\in H (\dvr;\Omega) : q\cdot n = 0 \text{ on } \Gn^z  \big\},
  \quad W = L^2 (\Omega).
\end{align*}
It was shown in \cite{msfmfe-Biot} that the system
(\ref{eq:cts1})--(\ref{eq:cts5}) is well posed.

Next, we present the MFE discretization of
(\ref{eq:cts1})--(\ref{eq:cts5}). For simplicity we assume that $\Omega$ is a Lipshicz
polygonal domain. Let $\Tc_{h}$ be a shape-regular quasi-uniform
finite element partition of $\Omega$, consisting of simplices or quadrilaterals,
with $h=\text{max}_{E\in \Tc_{h}} \text{diam}(E)$. The MFE method for solving 
(\ref{eq:cts1})--(\ref{eq:cts5}) is: find
$\ensuremath{(\sigma_{h},u_{h},\gamma_{h},z_{h},p_{h})}:[0,T]\to\X_{h}\times
V_{h}\times\W_{h}\times Z_{h}\times W_{h}$ such that, for a.e. $t\in (0,T)$,
\begin{align}
 & \inp[A(\sigma_{h} + \a p_h I)]{\tau}+\inp[u_{h}]{\dvr{\tau}}+\inp[\gamma_{h}]{\tau}=\gnp[g_{u}]{\t\,n}_{\Gamma_{D}^u}, & \forall\tau\in\X_{h},\label{eq:fe1}\\
 & \inp[\dvr{\sigma_{h}}]{v}=-\inp[f]{v}, & \forall v\in V_{h},\label{eq:fe2}\\
 & \inp[\sigma_{h}]{\xi}=0, & \forall\xi\in\W_{h},\label{eq:fe3}\\
  & \inp[\K z_{h}]{q}-\inp[p_{h}]{\dvr{q}}=-\gnp[g_{p}]{q\cdot n}_{\Gamma_{D}^p},
  & \forall q\in Z_{h},\label{eq:fe4}\\
  & c_{0}\inp[\dt{p_{h}}]{w}+\a\inp[\dt A(\sigma_{h} + \a p_h I)]{wI}
  +\inp[\dvr{z_{h}}]{w}=\inp[g]{w}, & \forall w\in W_{h},\label{eq:fe5}
\end{align}
with discrete initial data obtained as the elliptic projection of the
continuous initial data.  Here $\X_{h}\times V_{h}\times\W_{h}\times
Z_{h}\times W_{h}\subset\X\times V\times\W\times Z\times W$ is a
collection of suitable finite element spaces. In particular,
$\X_{h}\times V_{h}\times\W_{h}$
could be chosen from any of the known stable triplets for 
linear elasticity with weak stress symmetry, e.g.
\cite{stenberg1988family,Amara-Thomas,ArnAwaQiu,arnold2007mixed,Awanou-rect-weak,cockburn2010new,gopalakrishnan2012second,brezzi2008mixed,BBF-reduced,FarFor,msmfe-quads,lee2016towards}, satisfying the inf-sup condition
\begin{align}
  \forall v\in V_{h}, \xi\in\W_{h}, \quad
  \|v\|+\|\xi\| & \le C\sup_{0\ne\tau\in\mathbb{X}_{h}}\frac{\inp[v]{\dvr{\tau}}+\inp[\xi]{\tau}}{\|\tau\|_{\text{div}}}.\label{eq:inf-sup-elast}
\end{align}
For the flow part, $Z_{h}\times W_{h}$ could be chosen from any
of the known stable velocity-pressure pairs of MFE spaces such as the Raviart-Thomas
($\mathcal{RT}$)
or Brezzi-Douglas-Marini ($\mathcal{BDM}$) spaces, see \cite{brezzi1991mixed}, satisfying
the inf-sup condition
\begin{align}
  \forall w \in W_h, \quad \|w\| & \le
  C\sup_{0\ne q\in Z_{h}}\frac{(\dvr q,w)}{\|q\|_{\text{div}}}\label{eq:inf-sup-darcy}.
\end{align}

\section{Monolithic domain decomposition method}
  \label{subsec:Monolithic-MFE-Domain}

Let $\O=\cup_{i=1}^{m}\Omega_{i}$ be a union of non-overlapping
shape-regular polygonal subdomains, where each subdomain is a union
of elements of $\Tc_h$. Let
$\G_{i,j}=\dO_{i}\cap\dO_{j},\,\G=\cup_{i,j=1}^{m}\G_{i,j},$ and
$\G_{i}=\dO_{i}\cap\G=\dO_{i}\setminus\dO$ denote the interior
subdomain interfaces. Denote the restriction of the spaces $\X_{h}$,
$V_{h}$, $\W_{h}$, $Z_{h}$, and $W_{h}$ to $\O_{i}$ by $\X_{h,i}$,
$V_{h,i}$, $\W_{h,i}$, $Z_{h,i}$, and $W_{h,i}$, respectively. Let
$\Tc_{h,i,j}$ be a finite element partition of $\Gamma_{i,j}$ obtained
from the trace of $\Tc_{h}$, and let $n_{i,j}$ be a unit normal vector
on $\Gamma_{i,j}$ with an arbitrarily fixed direction. In the domain
decomposition formulation we utilize a vector Lagrange
multiplier $\lambda_{h}=(\lambda_{h}^u,\lambda_{h}^p)^{T}$
approximating the displacement and the pressure on the interface and
used to impose weakly the continuity of the normal components of the
poroelastic stress tensor $\s$ and the velocity vector $z$,
respectively. We define the Lagrange multiplier space on $\Tc_{i,j}$
and $\cup_{i<j}\Tc_{i,j}$ as follows:
$$
\Lambda_{h,i,j} := \begin{pmatrix}\Lambda_{h,i,j}^{u}\\
\Lambda_{h,i,j}^{p}
\end{pmatrix} := \begin{pmatrix}\X_{h}\,n_{i,j}\\
Z_{h}\cdot n_{i,j}
\end{pmatrix}, \quad
\Lambda_{h}^{u} := \bigoplus_{1\le i < j\le m}\Lambda_{h,i,j}^{u}, \quad
\Lambda_{h}^{p} := \bigoplus_{1\le i < j\le m}\Lambda_{h,i,j}^{p}, \quad
\Lambda_{h} := \begin{pmatrix}\Lambda_{h}^{u}\\
\Lambda_{h}^{p}
\end{pmatrix}.
$$
The domain decomposition formulation for the mixed Biot problem
in a semi-discrete form reads as follows: for $1\le i\le m$, find
$(\sigma_{h,i},u_{h,i},\g_{h,i},z_{h,i},p_{h,i},\lambda_{h}):[0,T]\to \X_{h,i}\times V_{h,i}\times\W_{h,i}\times Z_{h,i}\times W_{h,i}\times\Lambda_{h}$
such that, for a.e. $t\in (0,T)$,
\begin{align}
  & \inp[A(\sigma_{h,i} + \a p_{h,i}I)]{\tau}_{\O_{i}}
  +\inp[u_{h,i}]{\dvr{\tau}}_{\O_{i}}
  +\inp[\gamma_{h,i}]{\tau}_{\O_{i}} \nonumber \\
  & \qquad\quad =\gnp[g_{u}]{\t\,n_i}_{\dO_i\cap\Gd^{u}} + \gnp[\lambda_{h}^{u}]{\t\,n_i}_{\Gamma_{i}},
  && \forall\tau\in\X_{h,i},\label{eq:dd1-mfe1}\\
 & \inp[\dvr{\sigma_{h,i}}]{v}_{\O_{i}}=-\inp[f]{v}_{\O_{i}}, &  & \forall v\in V_{h,i},\label{eq:dd1-mfe2}\\
 & \inp[\sigma_{h,i}]{\xi}_{\O_{i}}=0, &  & \forall\xi\in\W_{h,i},\label{eq:dd1-mfe3}\\
 & \inp[\K z_{h,i}]{q}_{\O_{i}}-\inp[p_{h,i}]{\dvr{q}}_{\O_{i}}=-\gnp[g_{p}]{q\cdot n_i}_{\dO_i\cap\Gd^p}-\gnp[\lambda_{h}^{p}]{q\cdot n_i}_{\Gamma_{i}}, &  & \forall q\in Z_{h,i},\label{eq:dd1-mfe4}\\
  & c_{0}\inp[\dt{p_{h,i}}]{w}_{\O_{i}}
  +\a\inp[\dt A(\sigma_{h,i} + \a p_{h,i}I)]{wI}_{\O_{i}}
  +\inp[\dvr{z_{h,i}}]{w}_{\O_{i}}=\inp[g]{w}_{\O_{i}}, &  & \forall w\in W_{h,i},\label{eq:dd1-mfe5}\\
 & \sum_{i=1}^{m}\gnp[\sigma_{h,i}\,n_{i}]{\mu^{u}}_{\G_{i}}=0, &  & \forall\mu^{u}\in\Lambda_{h}^{u},\label{eq:dd1-mfe6}\\
 & \sum_{i=1}^{m}\gnp[z_{h,i}\cdot n_{i}]{\mu^{p}}_{\G_{i}}=0, &  & \forall\mu^{p}\in\Lambda_{h}^{p},\label{eq:dd1-mfe7}
\end{align}
where $n_{i}$ is the outward unit normal vector field on $\O_{i}$.
We note that both the elasticity and flow subdomain problems in the above
method are of Dirichlet type. It is easy to check that
\eqref{eq:dd1-mfe1}--\eqref{eq:dd1-mfe7}
is equivalent to the global formulation \eqref{eq:fe1}--\eqref{eq:fe5} with
$(\sigma_{h},u_{h},\gamma_{h},z_{h},p_{h})|_{\Omega_i} = (\sigma_{h,i},u_{h,i},\g_{h,i},z_{h,i},p_{h,i})$.

\subsection{Time discretization}

For time discretization we employ the backward Euler method.
Let $\{t_n\}_{n=0}^N$, $t_n = n \Delta t$, $\Delta t = T/N$, be a uniform partition of
$(0,T)$. The fully discrete problem corresponding to (\ref{eq:dd1-mfe1})--(\ref{eq:dd1-mfe7}) reads as follows: for $0 \le n \le N-1$ and
$1\le i\le m$, find
$(\sigma_{h,i}^{n+1},u_{h,i}^{n+1},\g_{h,i}^{n+1},z_{h,i}^{n+1},p_{h,i}^{n+1},\lambda_{h}^{n+1})
\in\X_{h,i}\times V_{h,i}\times\W_{h,i}\times Z_{h,i}\times W_{h,i}\times\Lambda_{h}$
such that:

\begin{align}
  & \inp[A(\sigma_{h,i}^{n+1} + \a p_{h,i}^{n+1}I)]{\tau}_{\O_{i}}
  +\inp[u_{h,i}^{n+1}]{\dvr{\tau}}_{\O_{i}}+\inp[\gamma_{h,i}^{n+1}]{\tau}_{\O_{i}}\nonumber \\
 & \qquad =\gnp[g_{u}^{n+1}]{\t\,n_i}_{\dO_i\cap\Gd^u}+\gnp[\lambda_{h}^{u,n+1}]{\t\,n_i}_{\Gamma_{i}}, &  & \forall\tau\in\X_{h,i},\label{eq:dd1-mfe1-dsc}\\
 & \inp[\dvr{\sigma_{h,i}^{n+1}}]{v}_{\O_{i}}=-\inp[f^{n+1}]{v}_{\O_{i}}, &  & \forall v\in V_{h,i},\label{eq:dd1-mfe2-dsc}\\
 & \inp[\sigma_{h,i}^{n+1}]{\xi}_{\O_{i}}=0, &  & \forall\xi\in\W_{h,i},\label{eq:dd1-mfe3-dsc}\\
 & \inp[\K z_{h,i}^{n+1}]{q}_{\O_{i}}-\inp[p_{h,i}^{n+1}]{\dvr{q}}_{\O_{i}}=-\gnp[g_{p}^{n+1}]{q\cdot n_i}_{\dO_i\cap\Gd^p}-\gnp[\lambda_{h}^{p,n+1}]{q\cdot n_i}_{\Gamma_{i}}, &  & \forall q\in Z_{h,i},\label{eq:dd1-mfe4-dsc}\\
 & c_{0}\inp[\frac{p_{h,i}^{n+1}-p_{h,i}^{n}}{\Delta t}]{w}_{\O_{i}}+\a\inp[\frac{A\left(\sigma_{h,i}^{n+1}-\sigma_{h,i}^{n}\right)}{\Delta t}]{wI}_{\O_{i}}\nonumber \\
 & \qquad +\a\inp[A\a\frac{p_{h,i}^{n+1}-p_{h,i}^{n}}{\Delta t}I]{w I}_{\O_{i}}+\inp[\dvr{z_{h,i}^{n+1}}]{w}_{\O_{i}}=\inp[g^{n+1}]{w}_{\O_{i}}, &  & \forall w\in W_{h,i},\label{eq:dd1-mfe5-dsc}\\
 & \sum_{i=1}^{m}\gnp[\sigma_{h,i}^{n+1}\,n_{i}]{\mu^{u}}_{\G_{i}}=0, &  & \forall\mu^{u}\in\Lambda_{h}^{u},\label{eq:dd1-mfe6-dsc}\\
 & \sum_{i=1}^{m}\gnp[z_{h,i}^{n+1}\cdot n_{i}]{\mu^{p}}_{\G_{i}}=0, &  & \forall\mu^{p}\in\Lambda_{h}^{p}.\label{eq:dd1-mfe7-dsc}
\end{align}

\begin{remark}\label{rem:init}
We note that the scheme requires initial data $p_{h,i}^0$ and
$\sigma_{h,i}^0$. Such data can be obtained by taking $p_{h,i}^0$ to
be the $L^2$-projection of $p_0$ onto $W_{h,i}$ and solving a mixed elasticity
domain decomposition problem obtained from \eqref{eq:dd1-mfe1-dsc}--\eqref{eq:dd1-mfe3-dsc}
and \eqref{eq:dd1-mfe6-dsc} with $n = -1$.
\end{remark}

\subsection{Time-differentiated elasticity formulation}
In the monolithic domain decomposition method we will utilize a
related formulation in which the first elasticity equation is
differentiated in time. The reason for this will become clear in the
analysis of the resulting interface problem.
We introduce new variables $\dot{u} = \dt u$ and $\dot{\g} = \dt \g$ representing the
time derivatives of the displacement and the rotation, respectively. The time-differentiated
equation \eqref{eq:cts1} is
\begin{equation*}\label{eq:cts1-dt}
\inp[\dt A(\sigma + \a p I)]{\tau} + \inp[\dot{u}]{\dvr{\tau}}
+\inp[\dot{\gamma}]{\tau}=\gnp[\dt g_{u}]{\t\,n}_{\Gamma_{D}^u}, \quad \forall \, \tau \in \X.
\end{equation*}
The semi-discrete equation \eqref{eq:fe1} is replaced by
\begin{equation*}\label{eq:fe1-dt}
\inp[\dt A(\sigma_h + \a p_h I)]{\tau} + \inp[\dot{u}_h]{\dvr{\tau}}
+\inp[\dot{\gamma}_h]{\tau}=\gnp[\dt g_{u}]{\t\,n}_{\Gamma_{D}^u}, \quad \forall \, \tau \in \X_h.
\end{equation*}
We note that the original variables $u_h$ and $\gamma_h$ can be recovered
easily from the solution of the time-differentiated problem. In particular, given
compatible initial data $\sigma_{h,0}$, $u_{h,0}$, $\gamma_{h,0}$ that satisfy
\eqref{eq:fe1}, the expressions
$$
u_h(t) = u_{h,0} + \int_0^t \dot{u}_h(s) \, ds,
\quad \gamma_h(t) = \gamma_{h,0} + \int_0^t \dot{\gamma}_h(s) \, ds,
$$
provide a solution to \eqref{eq:fe1} at any $t \in (0,T]$.

In the domain decomposition formulation we now consider the Lagrange
multiplier $\lambda_h = (\lambda_h^{\dot{u}},\lambda_h^p) \in
\Lambda_h$, where $\lambda_h^{\dot{u}} \in \Lambda_h^u$ approximates
the trace of $\dot{u}$ on $\Gamma$. Then the semi-discrete domain decomposition
equation \eqref{eq:dd1-mfe1} is replaced by
\begin{align*}
  & \inp[\dt A(\sigma_{h,i} + \a p_{h,i}I)]{\tau}_{\O_{i}}
  +\inp[\dot{u}_{h,i}]{\dvr{\tau}}_{\O_{i}}
  +\inp[\dot{\gamma}_{h,i}]{\tau}_{\O_{i}} =\gnp[\dt g_{u}]{\t\,n_i}_{\dO_i\cap\Gd^{u}}
  + \gnp[\lambda_{h}^{\dot{u}}]{\t\,n_i}_{\Gamma_{i}}, \quad
  \forall\tau\in\X_{h,i}. 
\end{align*}
Finally, the fully discrete equation \eqref{eq:dd1-mfe1-dsc} is replaced by
\begin{align}
  & \inp[A(\sigma_{h,i}^{n+1} + \a p_{h,i}^{n+1}I)]{\tau}_{\O_{i}}
  + \Delta t \inp[\dot{u}_{h,i}^{n+1}]{\dvr{\tau}}_{\O_{i}}
  + \Delta t \inp[\dot{\gamma}_{h,i}^{n+1}]{\tau}_{\O_{i}}\nonumber \\
  & \qquad = \Delta t \gnp[\dt g_{u}^{n+1}]{\t\,n_i}_{\dO_i\cap\Gd^u}
  + \Delta t \gnp[\lambda_{h}^{\dot{u},n+1}]{\t\,n_i}_{\Gamma_{i}}
+ \inp[A(\sigma_{h,i}^{n} + \a p_{h,i}^{n}I)]{\tau}_{\O_{i}}
  , &  & \forall\tau\in\X_{h,i}.\label{eq:dd1-mfe1-dsc-dt}
\end{align}

The original variables can be recovered from
\begin{equation}\label{init-recover}
u_h^n = u_h^0 + \Delta t \sum_{k=1}^n \dot{u}_h^k, \quad \gamma_h^n
= \gamma_h^0 + \Delta t  \sum_{k=1}^n \dot{\gamma}_h^k, \quad
\lambda_h^{u,n} = \lambda_h^{u,0} + \Delta t \sum_{k=1}^n \dot \lambda_h^{u,k}.
\end{equation}

\subsection{Reduction to an interface problem}

The non-overlapping domain decomposition algorithm for the solution of
\eqref{eq:dd1-mfe1-dsc-dt},
\eqref{eq:dd1-mfe2-dsc}--\eqref{eq:dd1-mfe7-dsc} at each time step is based on
reducing it to an interface problem for the Lagrange multiplier $\lambda_h$.
To this end, we introduce two sets of complementary subdomain problems.
The first set of problems reads as follows: for $1\le i\le m$, find
$(\sb,\ubd,\gbd,\zb,\pb)\in\X_{h,i}\times V_{h,i}\times\W_{h,i}\times Z_{h,i}\times W_{h,i}$
such that
\begin{align}
  & \inp[A(\sb + \a\pb I)]{\tau}_{\O_{i}}
  + \Delta t \inp[\ubd]{\dvr{\tau}}_{\O_{i}} + \Delta t \inp[\gbd]{\tau}_{\O_{i}}
  \nonumber \\
  & \quad\qquad = \Delta t \gnp[\dt g_{u}^{n+1}]{\t\,n_i}_{\dO_i\cap\Gd^u}
  + \inp[A(\sigma_{h,i}^{n} + \a p_{h,i}^{n} I)]{\tau}_{\O_{i}},
  &  & \forall\tau\in\X_{h,i},\label{eq:dd1-mfe1-bar}\\
 & \inp[\dvr{\sb}]{v}_{\O_{i}}=-\inp[f^{n+1}]{v}_{\O_{i}}, &  & \forall v\in V_{h,i},\label{eq:dd1-mfe2-bar}\\
 & \inp[\sb]{\xi}_{\O_{i}}=0, &  & \forall\xi\in\W_{h,i},\label{eq:dd1-mfe3-bar}\\
 & \inp[\K\zb]{q}_{\O_{i}}-\inp[\pb]{\dvr{q}}_{\O_{i}}=-\gnp[g_{p}^{n+1}]{q\cdot n_i}_{\dO_i\cap\Gd^p}, &  & \forall q\in Z_{h,i},\label{eq:dd1-mfe4-bar}\\
 & c_{0}\inp[\pb]{w}_{\O_{i}}+\a\inp[A(\sb + \a\pb I)]{wI}_{\O_{i}}
  +\Delta t\inp[\dvr{\zb}]{w}_{\O_{i}}\nonumber \\
  & \quad\qquad = \Delta t\inp[g^{n+1}]{w}_{\O_{i}}+c_{0}\inp[p_{h,i}^{n}]{w}_{\O_{i}}
  +\a\inp[A(\sigma_{h,i}^{n} + \a p_{h,i}^{n} I)]{wI}_{\O_{i}}, &  & \forall w\in W_{h,i}.\label{eq:dd1-mfe5-bar}
\end{align}
These subdomain problems have zero Dirichlet data on the interfaces and
incorporate the true source terms $f$ and $g$
and outside boundary conditions $g_u$ and $g_p$, as well as initial data
$\sigma_{h,i}^n$ and $p_{h,i}^n$.

The second problem set reads as follows: given $\lambda_h \in
\Lambda_h$, for $1\le i\le m$, find
$((\ss(\lambda_h),\usd(\lambda_h), \linebreak
\gsd(\lambda_h),\zs(\lambda_h),\ps(\lambda_h))\in\X_{h,i}\times
V_{h,i}\times\W_{h,i}\times Z_{h,i}\times W_{h,i}$ such that:
\begin{align}
  & \inp[A\big(\ss(\lambda_h) + \a\ps(\lambda_h)I \big) ]{\tau}_{\O_{i}}
  + \Delta t \inp[\usd(\lambda_h)]{\dvr{\tau}}_{\O_{i}}\nonumber \\
  & \qquad\quad + \Delta t \inp[\gsd(\lambda_h)]{\tau}_{\O_{i}}
  = \Delta t \gnp[\lambda_{h}^{\dot{u}}]{\t\,n_i}_{\Gamma_{i}},
  &  & \forall\tau\in\X_{h,i},\label{eq:dd1-mfe1-star}\\
 & \inp[\dvr{\ss(\lambda_h)}]{v}_{\O_{i}}=0, &  & \forall v\in V_{h,i},\label{eq:dd1-mfe2-star}\\
 & \inp[\ss(\lambda_h)]{\xi}_{\O_{i}}=0, &  & \forall\xi\in\W_{h,i},\label{eq:dd1-mfe3-star}\\
  & \inp[\K\zs(\lambda_h)]{q}_{\O_{i}}-\inp[\ps(\lambda_h)]{\dvr{q}}_{\O_{i}}
  =-\gnp[\lambda_{h}^{p}]{q\cdot n_i}_{\Gamma_{i}},
  &  & \forall q\in Z_{h,i},\label{eq:dd1-mfe4-star}\\
  & c_{0}\inp[\ps(\lambda_h)]{w}_{\O_{i}}
  +\a\inp[A\big(\ss(\lambda_h) + \a\ps(\lambda_h)I\big)]{wI}_{\O_{i}}\nonumber \\
  & \qquad\quad + \Delta t \inp[\dvr{\zs(\lambda_h)}]{w}_{\O_{i}}=0,
  &  & \forall w\in W_{h,i}.\label{eq:dd1-mfe5-star}
\end{align}
These problems have $\lambda_h$ as Dirichlet interface data, along with zero source terms,
zero outside boundary conditions, and zero data from the previous time step.

Define the bilinear forms $a_{i}^{n+1}:\Lambda_{h}\times\Lambda_{h}\to\R$,
$1\le i \le m$, $a^{n+1}:\Lambda_{h}\times\Lambda_{h}\to\R$, and
the linear functional $g^{n+1}:\Lambda_{h}\to\R$ for all $0\le n\le N-1$
by 
\begin{align}
  a_{i}^{n+1}(\lambda_{h},\mu) & = \gnp[\ss(\lambda_{h})\,n_{i}]{\mu^{u}}_{\Gamma_i}
  - \gnp[\zs(\lambda_{h})\cdot n_{i}]{\mu^{p}}_{\Gamma_i},
  \quad a^{n+1}(\lambda_{h},\mu) =\sum_{i=1}^{m}a_{i}^{n+1}(\lambda_{h},\mu),
  \label{eq:interface-operator}\\
  g^{n+1}(\mu) & = \sum_{i=1}^{m}\left(-\gnp[\sb\,n_{i}]{\m^{u}}_{\Gamma_i}
  + \gnp[\zb\cdot n_{i}]{\m^{p}}_{\Gamma_i} \right).
\end{align}
It follows from (\ref{eq:dd1-mfe6-dsc})--(\ref{eq:dd1-mfe7-dsc})
that, for each $0\le n\le N-1$, the solution to the global problem
\eqref{eq:dd1-mfe1-dsc-dt},
\eqref{eq:dd1-mfe2-dsc}--\eqref{eq:dd1-mfe7-dsc} 
is equivalent to solving
the interface problem for $\lambda_h^{n+1} \in \Lambda_h$:
\begin{align}
a^{n+1}(\lambda_{h}^{n+1},\mu) = g^{n+1}(\m),\quad\forall\m\in\Lambda_{h},\label{eq:interface-prob}
\end{align}
and setting
\begin{align*}
  \begin{aligned} & \s_{h,i}^{n+1}=\ss(\lambda_{h}^{n+1})+\sb,
    &  & \dot{u}_{h,i}^{n+1}=\usd(\lambda_{h}^{n+1})+\ubd,
    && \dot{\g}_{h,i}^{n+1} = \gsd(\lambda_{h}^{n+1})+\gbd,\\
 & z_{h,i}^{n+1}=\zs(\lambda_{h}^{n+1})+\zb, &  & p_{h,i}^{n+1}=\ps(\lambda_{h}^{n+1})+\pb.
\end{aligned}
\end{align*}

\subsection{Analysis of the interface problem}

We next show that the interface bilinear form $a^{n+1}(\cdot,\cdot)$
is positive definite, which implies that the interface problem
\eqref{eq:interface-prob} is well-posed and can be solved using a
suitable Krylov space method such as GMRES. We further obtain bounds
on the spectrum of $a^{n+1}(\cdot,\cdot)$ and establish rate of
convergence for GMRES. We start by obtaining an expression for
$a^{n+1}(\cdot,\cdot)$ in terms of the subdomain bilinear forms.

\begin{prop}
\label{prop:interface-formula} For $\lambda_{h},\m\in\Lambda_{h}$,
the interface bilinear form can be expressed as 
\begin{align}
  a^{n+1}(\lambda_{h},\m) & =
\frac{1}{\Delta t}
  \sum_{i=1}^{m}\bigg[\inp[A\ss(\m)]{\ss(\lambda_{h})}+2\inp[A\alpha\ps(\m)I]{\ss(\lambda_{h})}_{\O_{i}}\nonumber \\
  & +\inp[A\a\ps(\mu)I]{\a\ps(\lambda_{h})I}_{\O_{i}}
  +c_{0}\inp[\ps(\mu)]{\ps(\lambda_{h})}_{\O_{i}}\nonumber \\
 & + \Delta t \inp[K^{-1}\zs(\mu)]{\zs(\lambda_{h})}\bigg].\label{eq:interface-formula}
\end{align}
\end{prop}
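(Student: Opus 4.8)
The plan is to turn each of the two interface pairings in the definition~\eqref{eq:interface-operator} of $a_i^{n+1}$ into subdomain volume integrals by a cross-testing argument: I use the starred solution generated by one multiplier as a test function in the starred problem generated by the other, and then invoke the constraint equations to eliminate the auxiliary variables $\usd$ and $\gsd$. Recall that when $\mu=(\mu^u,\mu^p)$ is fed into the second problem set~\eqref{eq:dd1-mfe1-star}--\eqref{eq:dd1-mfe5-star}, the component $\mu^u$ plays the role of the elasticity data $\lambda_h^{\dot u}$ and $\mu^p$ that of the flow data $\lambda_h^p$.

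First I would handle the elasticity pairing $\gnp[\ss(\lambda_h)\,n_i]{\mu^u}_{\Gamma_i}$. Choosing $\tau=\ss(\lambda_h)$ in~\eqref{eq:dd1-mfe1-star} written for $\mu$ makes its right-hand side equal to $\Dt\,\gnp[\mu^u]{\ss(\lambda_h)\,n_i}_{\Gamma_i}$, so that
$$\Dt\,\gnp[\mu^u]{\ss(\lambda_h)\,n_i}_{\Gamma_i} = \inp[A(\ss(\mu)+\a\ps(\mu)I)]{\ss(\lambda_h)}_{\O_i} + \Dt\inp[\usd(\mu)]{\dvr\ss(\lambda_h)}_{\O_i} + \Dt\inp[\gsd(\mu)]{\ss(\lambda_h)}_{\O_i}.$$
Since $\usd(\mu)\in V_{h,i}$ and $\gsd(\mu)\in\W_{h,i}$, the constraints~\eqref{eq:dd1-mfe2-star} and~\eqref{eq:dd1-mfe3-star} for $\lambda_h$ force the last two terms to vanish, leaving only the $A$-weighted stress-pressure term. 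This is exactly why the time-differentiated elasticity formulation is used: pairing $\usd$ and $\gsd$ against the divergence-free, weakly symmetric stress $\ss$ makes the displacement and rotation disappear and produces a clean quadratic expression.

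Next I would treat the flow pairing. Testing~\eqref{eq:dd1-mfe4-star} for $\mu$ with $q=\zs(\lambda_h)$ gives
$$-\gnp[\mu^p]{\zs(\lambda_h)\cdot n_i}_{\Gamma_i} = \inp[\K\zs(\mu)]{\zs(\lambda_h)}_{\O_i} - \inp[\ps(\mu)]{\dvr\zs(\lambda_h)}_{\O_i},$$
and the only remaining coupling term $\inp[\ps(\mu)]{\dvr\zs(\lambda_h)}_{\O_i}$ I would eliminate by testing~\eqref{eq:dd1-mfe5-star} for $\lambda_h$ with $w=\ps(\mu)\in W_{h,i}$, which expresses $\Dt\inp[\dvr\zs(\lambda_h)]{\ps(\mu)}_{\O_i}$ through $c_0\inp[\ps(\lambda_h)]{\ps(\mu)}_{\O_i}$ and $\a\inp[A(\ss(\lambda_h)+\a\ps(\lambda_h)I)]{\ps(\mu)I}_{\O_i}$.

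Finally I would sum both identities over $i$, divide by $\Dt$, and collect terms, using the symmetry of $A$ to rewrite $\a\inp[A\ss(\lambda_h)]{\ps(\mu)I}_{\O_i}=\inp[A\a\ps(\mu)I]{\ss(\lambda_h)}_{\O_i}$ and $\a\inp[A\a\ps(\lambda_h)I]{\ps(\mu)I}_{\O_i}=\inp[A\a\ps(\mu)I]{\a\ps(\lambda_h)I}_{\O_i}$. The first of these combines with the stress-pressure term already produced in the elasticity step to give the factor $2$ in front of the mixed term $\inp[A\a\ps(\mu)I]{\ss(\lambda_h)}$, the second gives the pure pressure block, and the $c_0$ and $\K$ contributions carry over directly, yielding~\eqref{eq:interface-formula}. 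I do not expect a genuine obstacle here; the work is entirely bookkeeping, and the two points demanding care are the correct placement of $\mu^u,\mu^p$ into the two starred problems and the repeated use of the symmetry of $A$ to merge the two copies of the mixed term. Note that the resulting form is not symmetric in $\lambda_h$ and $\mu$, since the mixed term pairs $\ps(\mu)$ with $\ss(\lambda_h)$ asymmetrically, consistent with the subsequent assertion that $a^{n+1}$ is positive definite but generally nonsymmetric.
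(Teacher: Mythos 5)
Your proposal is correct and follows exactly the paper's (very terse) proof: test the starred problem driven by $\mu$ with $\tau=\ss(\lambda_h)$ and $q=\zs(\lambda_h)$, use the constraint equations \eqref{eq:dd1-mfe2-star}--\eqref{eq:dd1-mfe3-star} for $\lambda_h$ to annihilate the $\usd(\mu)$ and $\gsd(\mu)$ terms, eliminate $\inp[\ps(\mu)]{\dvr\zs(\lambda_h)}_{\O_i}$ via \eqref{eq:dd1-mfe5-star} with $w=\ps(\mu)$, and combine the two mixed terms using the symmetry of $A$. The bookkeeping, signs, and the closing observations about the role of the time-differentiated formulation and the nonsymmetry of the resulting form are all accurate.
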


\begin{proof}
To see this, consider the second set of complementary equations  \eqref{eq:dd1-mfe1-star}--\eqref{eq:dd1-mfe5-star}
with data $\mu$, use the test functions: $\ss(\lambda_{h})$ in (\ref{eq:dd1-mfe1-star})
and $\zs(\lambda_{h})$ in equation (\ref{eq:dd1-mfe4-star}) . 
\end{proof}

\begin{remark}
  The non-differentiated formulation results in a missing scaling of $\frac{1}{\Delta t}$
  in the term \linebreak
  $\inp[A\big(\ss(\lambda_h) + \a\ps(\lambda_h)I \big) ]{\tau}_{\O_{i}}$ in
  \eqref{eq:dd1-mfe1-star} compared to the similar term in \eqref{eq:dd1-mfe5-star}. Therefore
  the two terms cannot be combined, resulting in a non-coercive expression for
  $a^{n+1}(\cdot,\cdot)$.  
  \end{remark}

Recalling the properties of $A$ and $K$, there exist constants
  $0 < a_{\min} \le a_{\max} < \infty$ and $0 < k_{\min} \le k_{\max} < \infty$ such that
\begin{align}
  a_{\min}\|\t\|^{2}\le\inp[A\t]{\t}\le a_{\max}\|\t\|^{2}, \quad
\forall \, \t \in \X,\label{eq:coercivity-elast}\\
k_{\min}\|q\|^{2}\le\inp[K q]{q}\le k_{\max}\|q\|^{2}, \quad \forall \, q \in Z.
\label{eq:coercivity-flow}
\end{align}
We will also utilize suitable mixed interpolants in the finite element spaces $\X_{h,i}$ and
$Z_{h,i}$. It is shown in \cite{eldar_elastdd} that there exists an interpolant
$\Pit_i: H^{\epsilon}(\O_{i},\M)\cap\X_{i} \to \X_{h,i}$ for any $\epsilon > 0$ such that
for all $\sigma \in H^{\epsilon}(\O_{i},\M)\cap\X_{i}$, $\tau \in \X_{h,i}$, $v \in V_{h,i}$,
and $\xi \in \W_{h,i}$,
\begin{equation}\label{Pi-tilde-prop}
  (\dvr (\Pit_i \sigma - \sigma), v)_{\Omega_i} = 0, \quad
  (\Pit_i \sigma - \sigma, \xi)_{\Omega_i} = 0, \quad
  \langle(\Pit_i \sigma - \sigma)n_i,\tau \, n_i\rangle_{\partial \Omega_i} = 0,
\end{equation}
and
\begin{equation}\label{Pi-tilde-bound}
\|\Pit_i \sigma\|_{\Omega_i} \le C (\|\sigma\|_{\epsilon,\Omega_i} + \|\dvr \sigma\|_{\Omega_i}).
\end{equation}
For the Darcy problem we use the canonical mixed interpolant \cite{brezzi1991mixed},
$\Pi: H^{\epsilon}(\O_{i},\R^d)\cap Z_{i} \to Z_{h,i}$ such that for all
$z \in H^{\epsilon}(\O_{i},\R^d)\cap Z_{i}$, $q \in Z_{h,i}$, and $w \in W_{h,i}$,
\begin{equation}\label{Pi-prop}
  (\dvr (\Pi_i z - z), w)_{\Omega_i} = 0, \quad
  \langle(\Pi_i z - z)\cdot n_i,q \cdot n_i\rangle_{\partial \Omega_i} = 0,
\end{equation}
and
\begin{equation}\label{Pi-bound}
\|\Pi_i z\|_{\Omega_i} \le C (\|z\|_{\epsilon,\Omega_i} + \|\dvr z\|_{\Omega_i}).
\end{equation}

\begin{lem}
The interface bilinear form $a^{n+1}(\cdot,\cdot)$ is positive
definite over $\Lambda_{h}$. 
\end{lem}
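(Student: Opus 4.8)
The plan is to read off positive semidefiniteness from the closed form in Proposition~\ref{prop:interface-formula} and then promote it to definiteness using the two subdomain inf-sup conditions. First I would put $\mu = \lambda_h$ in \eqref{eq:interface-formula}; by the symmetry and bilinearity of $A$ the first three summands assemble into the completed square $\inp[A(\ss(\lambda_h)+\a\ps(\lambda_h)I)]{\ss(\lambda_h)+\a\ps(\lambda_h)I}_{\O_i}$, giving
\begin{align*}
a^{n+1}(\lambda_h,\lambda_h) = \frac{1}{\Delta t}\sum_{i=1}^{m}\Big[
& \inp[A(\ss(\lambda_h)+\a\ps(\lambda_h)I)]{\ss(\lambda_h)+\a\ps(\lambda_h)I}_{\O_i} \\
& + c_0\|\ps(\lambda_h)\|_{\O_i}^2
+ \Delta t\,\inp[\K\zs(\lambda_h)]{\zs(\lambda_h)}_{\O_i}\Big].
\end{align*}
Each bracketed term is nonnegative by \eqref{eq:coercivity-elast}, the positive definiteness of $K^{-1}$ equivalent to \eqref{eq:coercivity-flow}, and $c_0\ge 0$, so $a^{n+1}(\lambda_h,\lambda_h)\ge 0$. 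It is worth noting that this is precisely where the time-differentiated formulation is essential (cf.\ the preceding remark): only with the common $1/\Delta t$ scaling do the stress and pressure contributions combine into a square.

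For definiteness, assume $a^{n+1}(\lambda_h,\lambda_h)=0$. Coercivity of $A$ and $K^{-1}$ then forces $\ss(\lambda_h)+\a\ps(\lambda_h)I=0$ and $\zs(\lambda_h)=0$ in every $\O_i$. For the flow part I would glue the local solutions into global functions $z_h=0\in Z_h$ and $p_h|_{\O_i}=\ps(\lambda_h)\in W_h$ and sum \eqref{eq:dd1-mfe4-star} against a global $q\in Z_h$. Because $q$ has continuous normal trace, $n_i=-n_j$ on each $\G_{i,j}$, and $\lambda_h^p$ is single-valued there, the interface sum $\sum_i\gnp[\lambda_h^p]{q\cdot n_i}_{\Gamma_i}$ cancels in pairs; using $\zs(\lambda_h)=0$ this leaves $\inp[p_h]{\dvr q}_{\O}=0$ for all $q\in Z_h$, whence the Darcy inf-sup condition \eqref{eq:inf-sup-darcy} gives $p_h=0$. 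Thus $\ps(\lambda_h)=0$, and therefore $\ss(\lambda_h)=0$, in every subdomain. Substituting back into \eqref{eq:dd1-mfe4-star} yields $\gnp[\lambda_h^p]{q\cdot n_i}_{\Gamma_i}=0$ for all $q\in Z_{h,i}$; since the normal traces $q\cdot n_i|_{\Gamma_i}$ exhaust $\Lambda_h^p$ (by the definition $\Lambda_{h,i,j}^p=Z_h\cdot n_{i,j}$, realizable locally through $\Pi_i$), this forces $\lambda_h^p=0$.

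The main obstacle is the elasticity multiplier $\lambda_h^{\dot u}$, since the quadratic form controls neither $\usd(\lambda_h)$ nor $\gsd(\lambda_h)$: the vanishing of $a^{n+1}(\lambda_h,\lambda_h)$ gives no direct information about the displacement and rotation rates. I would recover them by the same gluing device. With $\ss(\lambda_h)=0$, equation \eqref{eq:dd1-mfe1-star} reduces to $\inp[\usd(\lambda_h)]{\dvr\tau}_{\O_i}+\inp[\gsd(\lambda_h)]{\tau}_{\O_i}=\gnp[\lambda_h^{\dot u}]{\tau\,n_i}_{\Gamma_i}$; gluing $\sigma_h=0\in\X_h$, $\dot u_h|_{\O_i}=\usd(\lambda_h)\in V_h$, and $\dot\g_h|_{\O_i}=\gsd(\lambda_h)\in\W_h$ and summing against a global $\tau\in\X_h$, the interface terms cancel exactly as before, leaving $\inp[\dot u_h]{\dvr\tau}_{\O}+\inp[\dot\g_h]{\tau}_{\O}=0$ for all $\tau\in\X_h$. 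The elasticity inf-sup condition \eqref{eq:inf-sup-elast} now yields $\dot u_h=0$ and $\dot\g_h=0$, so $\usd(\lambda_h)=\gsd(\lambda_h)=0$. Returning once more to \eqref{eq:dd1-mfe1-star} leaves $\gnp[\lambda_h^{\dot u}]{\tau\,n_i}_{\Gamma_i}=0$ for all $\tau\in\X_{h,i}$, and since these normal traces exhaust $\Lambda_h^{\dot u}$ (realizable locally through $\Pit_i$) we obtain $\lambda_h^{\dot u}=0$. Combined with $\lambda_h^p=0$, this gives $\lambda_h=0$ and hence positive definiteness.
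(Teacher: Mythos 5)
Your proof is correct, and it takes a genuinely different route from the paper's. You and the paper agree on the first half: setting $\mu=\lambda_h$ in \eqref{eq:interface-formula} and completing the square to obtain \eqref{eq:interface-coer-sym}, whence semidefiniteness. For definiteness, the paper constructs auxiliary \emph{continuous} elliptic problems on individual subdomains (an elasticity problem with $\psi_i^{\ud}n_i=\lambda_h^{\ud}$ on $\Gamma_i$ and a Darcy problem with $\psi_i^p\cdot n_i=\lambda_h^p$), invokes elliptic regularity so that the mixed interpolants $\Pit_i$, $\Pi_i$ are defined, tests \eqref{eq:dd1-mfe1-star} and \eqref{eq:dd1-mfe4-star} with these interpolants, and iterates outward from subdomains touching $\Gamma_D^u$ and $\Gamma_D^p$. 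This yields the \emph{quantitative} bounds \eqref{bound-lambda-u} and \eqref{bound-lambda-p}, i.e.\ $\|\lambda_h^{\ud}\|_\G^2\le (C/\Delta t)\,a^{n+1}(\lambda_h,\lambda_h)$ and $\|\lambda_h^p\|_\G^2\le C\,a^{n+1}(\lambda_h,\lambda_h)$, which the paper then reuses as the left inequalities of Theorem~\ref{thm:intr-op-bound}. Your argument instead exploits the equivalence between the interface problem and the global discrete problem: you glue the local solutions (legitimate, since $V_h$, $\W_h$, $W_h$ are discontinuous and the interface terms cancel against single-valued normal traces of global $\tau\in\X_h$, $q\in Z_h$), apply the global inf-sup conditions \eqref{eq:inf-sup-elast} and \eqref{eq:inf-sup-darcy} to kill $p_h$, $\dot u_h$, $\dot\gamma_h$, and then recover $\lambda_h^p=\lambda_h^{\ud}=0$ from the surjectivity of the normal-trace maps onto $\Lambda_h$. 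Your handling of the elasticity multiplier is a nice touch, since indeed neither $\usd$ nor $\gsd$ is controlled by the quadratic form. What you gain is elementarity: no auxiliary PDEs, no elliptic regularity, no subdomain iteration, and no assumption that each subdomain chain reaches the Dirichlet boundary. What you give up is the quantitative lower bound on the field of values, which the paper needs anyway for the GMRES convergence analysis; your argument proves the lemma as stated but would not substitute for the paper's proof in the subsequent theorem.
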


\begin{proof}
Using the representation of the interface bilinear form (\ref{eq:interface-formula}),
we get
\begin{multline}
  a^{n+1}(\lah,\lah) = \frac{1}{\Delta t} \sum_{i=1}^m \bigg[\inp[A(\ss(\lambda_{h})
      +\alpha\ps(\lambda_{h})I)]{\ss(\lambda_{h})
      +\alpha\ps(\lambda_{h})I}_{\O_{i}}\\
    +c_{0}\inp[\ps(\lambda_{h})]{\ps(\lambda_{h})}_{\O_{i}}\
    + \Delta t \inp[\K\zs(\lambda_{h})]{\zs(\lambda_{h})}_{\O_{i}}\bigg],
  \label{eq:interface-coer-sym}
\end{multline}
which, combined with \eqref{eq:coercivity-elast}--\eqref{eq:coercivity-flow}, gives
$a^{n+1}(\lah,\lah)\ge0$, and hence $a^{n+1}(\cdot,\cdot)$ is positive semidefinite.
We next show that $a(\lambda_{h},\lambda_{h})=0$ implies $\lambda_{h}=0$. We use a two-part
argument to control separately $\lambda_{h}^{\dot{u}}$ and $\lambda_h^p$.  
Let $\O_{i}$ be a domain adjacent to $\Gamma_{D}^u$ such that $|\dO_{i}\cap\G_{D}^u| > 0$
and let $(\psi^{\ud},\phi^{\ud})$
be the solution of the auxiliary elasticity problem
\begin{align*}
  & A\psi_{i}^{\ud}=\epsilon(\phi_{i}^{\ud}),\quad\dvr\psi_{i}^{\ud}=0\quad\text{in }\O_{i}, \\
 & \phi_{i}^{\ud}=0\quad\text{on }\dO_{i}\cap\G_{D}^u,\\
 & \psi_{i}^{\ud}\,n_{i}=\begin{cases}
0 & \mbox{on }\dO_{i}\cap\G_{N}^\sigma\\
\lambda_{h}^{\ud} & \mbox{on }\G_{i}.
\end{cases}
\end{align*}
Elliptic regularity \cite{Dauge} implies that
$\psi_{i}^{\ud}\in H^{\epsilon}(\O_{i},\M)\cap\X_{i}$ for some
$\epsilon > 0$, and therefore the mixed interpolant $\Pit_{i}\psi_{i}^{\ud}$
is well defined. Taking $\tau=\Pit_{i}\psi_{i}^{\ud}$ in \eqref{eq:dd1-mfe1-star}
and using \eqref{Pi-tilde-prop} and \eqref{Pi-tilde-bound} gives
\begin{align}
  \|\lambda_{h}^{\ud}\|_{\G_{i}}^{2} & =
  \gnp[\lambda_{h}^{\ud}]{\psi_{i}^{\ud}\,n_{i}}_{\G_{i}}
  = \gnp[\lambda_{h}^{\ud}]{\Pit\psi_{i}^{\ud}\,n_{i}}_{\G_{i}} \nonumber \\
  & = \frac{1}{\Delta t}\inp[A(\ss(\lambda_{h})
    + \a\ps(\lambda_{h})I)]{\Pit\psi_{i}^{\ud}}_{\O_{i}}
  + \inp[\usd(\lambda_h)]{\dvr{\Pit\psi_{i}^{\ud}}}_{\O_{i}}
  + \inp[\gsd(\lambda_h)]{\Pit\psi_{i}^{\ud}}_{\O_{i}} \nonumber \\
    & = \frac{1}{\Delta t}\inp[A^{1/2}(\ss(\lambda_{h})
    + \a\ps(\lambda_{h})I)]{A^{1/2}\Pit\psi_{i}^{\ud}}_{\O_{i}} \nonumber \\
  & \le \frac{C}{\Delta t} \|A^{1/2}(\ss(\lambda_{h})+\alpha\ps(\lambda_{h})I)\|_{\O_{i}}
  \|\psi_{i}^{\ud}\|_{\epsilon,\O_{i}} \nonumber \\
  & \le \frac{C}{\Delta t}\|A^{1/2}(\ss(\lambda_{h})+\alpha\ps(\lambda_{h})I)\|_{\O_{i}}
  \|\lambda_{h}^{\ud}\|_{\G_{i}}, \label{aux-bound-lambda-u}
\end{align}
where in the last inequality we used the elliptic regularity bound \cite{Dauge}
\begin{equation}\label{ell-reg}
  \|\psi_{i}^{\ud}\|_{\epsilon,\O_{i}} \le C \|\lambda_h\|_{\epsilon-1/2,\Gamma_i}.
\end{equation}
Using the
representation of the interface bilinear form \eqref{eq:interface-formula}, we obtain
\begin{equation}
  \|\lambda_{h}^{\ud}\|_{\G_{i}}^{2}\le \frac{C}{\Delta t} a_{i}^{n+1}(\lambda_{h},\lambda_{h})\,\,\,\,\,
  \forall \, \lambda_{h}\in\Lambda_{h}.\label{eq:coerc-ineq-sub}
\end{equation}
Next, consider an adjacent subdomain $\Omega_j$ such that $|\Gamma_{ij}| > 0$.
Let $(\psi_j^{\ud},\phi_j^{\ud})$ be the solution to
\begin{align*}
  & A\psi_{j}^{\ud}=\epsilon(\phi_{j}^{\ud}),\quad\dvr\psi_{j}^{\ud}=0\quad\text{in }\O_{j},\\
 & \phi_{i}^{\ud}=0\quad\text{on } \Gamma_{ij},\\
 & \psi_{i}^{\ud}\,n_{i}=\begin{cases}
0 & \mbox{on }\dO_{j}\cap\dO\\
\lambda_{h}^{\ud} & \mbox{on }\G_{j}\setminus\Gamma_{ij}.
\end{cases}
\end{align*}
Taking $\tau=\Pit_j\psi_{j}^{\ud}$ in \eqref{eq:dd1-mfe1-star}
and using \eqref{Pi-tilde-prop} gives
\begin{align*}
  \|\lambda_{h}^{\ud}\|_{\G_{j}\setminus\G_{ij}}^{2} & = \frac{1}{\Delta t}\inp[A(\ssj(\lambda_{h})
    + \a\psj(\lambda_{h})I)]{\Pit\psi_{j}^{\ud}}_{\O_{j}}
  - \langle \lambda_{h}^{\ud},\psi_{j}^{\ud}\, n_j \rangle_{\Gamma_{ij}} \\
  & \le C\left(\frac{1}{\Delta t}\|A^{1/2}(\ssj(\lambda_{h}) + \a\psj(\lambda_{h})I)\|_{\Omega_j}
  + \|\lambda_{h}^{\ud}\|_{\Gamma_{ij}}\right)\|\psi_{j}^{\ud}\|_{\epsilon,\Omega_j} \\
  & \le \frac{C}{\sqrt{\Delta t}}\left(a_{j}^{n+1}(\lambda_{h},\lambda_{h})^{1/2}
  + a_{i}^{n+1}(\lambda_{h},\lambda_{h})^{1/2}\right)
  \|\lambda_{h}^{\ud}\|_{\G_{j}\setminus\G_{ij}},
\end{align*}
where in the first inequality we used \eqref{Pi-tilde-bound} and the trace inequality
\cite{Mathew}
\begin{equation*}
  \langle \tau \, n_j, \mu \rangle_{\Gamma_{ij}} \le C (\|\tau\|_{\epsilon,\Omega_j}
  + \|\dvr \tau\|_{\Omega_j})\|\mu\|_{\Gamma_{ij}}, \quad \forall \,
  \tau \in H^{\epsilon}(\O_{j},\M)\cap\X_{j}, \, \mu \in L^2(\Gamma_{ij},\R^d),
\end{equation*}
and for the second inequality we used the representation \eqref{eq:interface-formula}
and the bound from $\Omega_i$ \eqref{eq:coerc-ineq-sub}, along with the elliptic regularity
bound \eqref{ell-reg}. Iterating over all subdomains in a similar fashion results in
\begin{equation}\label{bound-lambda-u}
  \|\lambda_{h}^{\ud}\|_{\G}^{2}\le \frac{C}{\Delta t} a^{n+1}(\lambda_{h},\lambda_{h})\,\,\,\,\,
  \forall \, \lambda_{h}\in\Lambda_{h}.
\end{equation}

The argument for $\lambda_h^p$ is similar. We start with a subdomain $\Omega_i$
adjacent to $\Gamma_{D}^p$ such that $|\dO_{i}\cap\G_{D}^p| > 0$
and let $(\psi^{p},\phi^{p})$ be the solution of the auxiliary flow problem
\begin{align}
  & K^{-1}\psi_{i}^{p}=\nabla \phi_{i}^{p}, \quad \nabla\cdot\psi_{i}^{p}=0\quad\text{in }\O_{i},
  \label{aux2-1}\\
 & \phi_{i}^{p}=0\quad\text{on }\dO_{i}\cap\G_{D}^p,\\
 & \psi_{i}^{p}\cdot n_{i}=\begin{cases}
0 & \mbox{on }\dO_{i}\cap\G_{N}^z,\\
\lambda_{h}^{p} & \mbox{on }\G_{i}.
\end{cases}\label{aux2-2}
\end{align}
Taking $q=\Pi_{i}\psi_{i}^{p}$ in \eqref{eq:dd1-mfe4-star} and using
\eqref{Pi-prop}, \eqref{Pi-bound}, and elliptic regularity similar to \eqref{ell-reg} gives
\begin{align*}
  \|\lambda_h^p\|_{\Gamma_i}^2 & = \<\lambda_h^p,\psi_{i}^{p}\cdot n_i\>_{\G_i}
  = \<\lambda_h^p,\Pi_i\psi_{i}^{p}\cdot n_i\>_{\G_i}
  = (\K\zs(\lambda_{h}),\Pi_i\psi_{i}^{p})_{\Omega_i} \\
  & \le C \|K^{-1/2}\zs(\lambda_{h})\|_{\Omega_i}\|\psi_i^p\|_{\epsilon,\O_i}
  \le C \|K^{-1/2}\zs(\lambda_{h})\|_{\Omega_i}\|\lambda_h^p\|_{\Gamma_i},
\end{align*}
which, together with \eqref{eq:interface-formula} implies
$$
\|\lambda_h^p\|_{\Gamma_i}^2 \le C a_{i}^{n+1}(\lambda_{h},\lambda_{h}).
$$
Iterating over all subdomains in a way similar to the argument for $\lambda_h^{\ud}$,
we obtain
\begin{equation}\label{bound-lambda-p}
  \|\lambda_{h}^p\|_{\G}^{2}\le C a^{n+1}(\lambda_{h},\lambda_{h})\,\,\,\,\,
  \forall \, \lambda_{h}\in\Lambda_{h}.
\end{equation}
A combination of \eqref{bound-lambda-u} and \eqref{bound-lambda-p} implies that
$a^{n+1}(\cdot,\cdot)$ is positive definite on $\Lambda_h$.
 \end{proof}

\begin{thm} \label{thm:intr-op-bound}
There exist positive constants $C_{0}$ and $C_{1}$ independent of
$h$ and $\Delta t$ such that
\begin{align}
  \forall \, \lambda_{h}\in\Lambda_{h},\qquad
  C_{0}( \Delta t \|\lambda_{h}^{\ud}\|_{\G}^{2} + \|\lambda_{h}^p\|_{\G}^{2})
  \le a^{n+1}(\lambda_{h},\lambda_{h})\le C_{1}h^{-1}
  (\Delta t \|\lambda_{h}^{\ud}\|_{\G}^{2} + \|\lambda_h^p\|_{\G}^2).\label{eq:eigenvalue-bound}
\end{align}
In addition, there exist positive constants $\tilde C_{0}$ and $\tilde C_{1}$ independent of
$h$, $\Delta t$, and $c_0$ such that
\begin{align}
  \forall\lambda_{h}\in\Lambda_{h},\qquad
  \tilde C_{0}( \Delta t \|\lambda_{h}^{\ud}\|_{\G}^{2} + \|\lambda_{h}^p\|_{\G}^{2})
  \le a^{n+1}(\lambda_{h},\lambda_{h})\le \tilde C_{1}h^{-1}\Delta t^{-1} 
  (\Delta t \|\lambda_{h}^{\ud}\|_{\G}^{2} + \|\lambda_h^p\|_{\G}^2).\label{eq:eigenvalue-bound-2}
\end{align}
\end{thm}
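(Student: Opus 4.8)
The plan is to establish the lower and upper spectral bounds separately, building on the coercivity argument already carried out in the preceding Lemma. The lower bound in \eqref{eq:eigenvalue-bound} is essentially free: combining the two estimates \eqref{bound-lambda-u} and \eqref{bound-lambda-p} from the proof of positive-definiteness already yields
$$
\Delta t \|\lambda_h^{\ud}\|_\G^2 + \|\lambda_h^p\|_\G^2 \le \frac{C}{\Delta t}\,\Delta t\, a^{n+1}(\lambda_h,\lambda_h) + C\, a^{n+1}(\lambda_h,\lambda_h),
$$
so that the lower bound holds with a constant $C_0$ independent of $h$ and $\Delta t$, proving the left inequality of \eqref{eq:eigenvalue-bound}. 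For the refined lower bound in \eqref{eq:eigenvalue-bound-2} with $\tilde C_0$ independent of $c_0$, I would simply observe that \eqref{bound-lambda-u} and \eqref{bound-lambda-p} were derived using only the coercivity \eqref{eq:coercivity-elast}--\eqref{eq:coercivity-flow} of $A$ and $K$ via the representation \eqref{eq:interface-formula}, and never required a positive $c_0$ (the $c_0$ term in \eqref{eq:interface-coer-sym} only adds a nonnegative contribution). Hence the same constant works uniformly in $c_0$, giving $\tilde C_0 = C_0$.

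For the upper bounds, the strategy is to bound each term in the representation \eqref{eq:interface-coer-sym} by the interface norms of $\lambda_h$. The natural route is to test the subdomain problems \eqref{eq:dd1-mfe1-star}--\eqref{eq:dd1-mfe5-star} with the solution itself and use the trace/discrete-inverse inequalities. Specifically, taking $\tau = \ss(\lambda_h)$ in \eqref{eq:dd1-mfe1-star} and $q = \zs(\lambda_h)$ in \eqref{eq:dd1-mfe4-star}, together with the divergence-free and symmetry constraints \eqref{eq:dd1-mfe2-star}--\eqref{eq:dd1-mfe3-star} and the energy identity \eqref{eq:dd1-mfe5-star}, reproduces \eqref{eq:interface-coer-sym}, so that $a^{n+1}(\lambda_h,\lambda_h)$ equals $\tfrac{1}{\Delta t}$ times a sum of subdomain energy norms. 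Each of these is controlled by the boundary data $\lambda_h$ via a discrete \emph{a priori} bound: the elasticity solve gives $\|\ss(\lambda_h)+\alpha\ps(\lambda_h)I\|_{\O_i} \le C\, \Delta t \|\lambda_h^{\ud}\|_{\G_i}$ through duality against $\tau\, n_i$ on $\Gamma_i$, and the flow solve gives $\|\zs(\lambda_h)\|_{\O_i} \le C \|\lambda_h^p\|_{\G_i}$. The factor $h^{-1}$ in the upper bound arises from the discrete trace inequality $\|\tau\, n_i\|_{\G_i} \le C h^{-1/2}\|\tau\|_{\O_i}$ applied on the finite element spaces, which is where the mesh dependence enters.

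The distinction between the two upper bounds is a matter of bookkeeping on the powers of $\Delta t$. In \eqref{eq:eigenvalue-bound} one keeps the $\Delta t$ scalings inside the energy norms, so the $\tfrac{1}{\Delta t}$ prefactor in \eqref{eq:interface-coer-sym} cancels against a $\Delta t^2$ coming from squaring the elasticity bound $\|\ss+\alpha\ps I\|_{\O_i}\le C\Delta t\|\lambda_h^{\ud}\|_{\G_i}$, leaving a clean $\Delta t\|\lambda_h^{\ud}\|_\G^2$ that matches the left-hand side scaling; the $c_0$-dependent term is then bounded using $c_0 \le C$, which is why $C_1$ may depend on $c_0$. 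For \eqref{eq:eigenvalue-bound-2} one instead bounds crudely, pulling out the worst-case $\Delta t^{-1}$ prefactor explicitly so that the constant $\tilde C_1$ no longer absorbs any $c_0$-dependence, at the cost of the extra $\Delta t^{-1}$ factor in the upper estimate. The main obstacle I anticipate is carefully tracking these $\Delta t$ powers through the flow term, since $\Delta t$ multiplies the divergence in \eqref{eq:dd1-mfe5-star} but not in \eqref{eq:dd1-mfe4-star}, creating an asymmetry between how the elasticity and Darcy contributions scale; getting the $c_0$-uniformity in \eqref{eq:eigenvalue-bound-2} right requires verifying that the $c_0$-term genuinely drops out of both the lower and upper estimates rather than hiding in a constant.
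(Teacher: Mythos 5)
Your treatment of the lower bounds and of the first upper bound follows the paper's own route. The left inequalities in both \eqref{eq:eigenvalue-bound} and \eqref{eq:eigenvalue-bound-2} are exactly the combination of \eqref{bound-lambda-u} and \eqref{bound-lambda-p}, and your observation that those estimates never invoke the $c_0\|\ps(\lambda_h)\|^2$ portion of the energy (so the lower constant is $c_0$-uniform) is correct. For the right inequality in \eqref{eq:eigenvalue-bound} the paper likewise starts from $a_i^{n+1}(\lambda_h,\lambda_h)=\langle\ss(\lambda_h)\,n_i,\lambda_h^{\ud}\rangle_{\G_i}-\langle\zs(\lambda_h)\cdot n_i,\lambda_h^p\rangle_{\G_i}$, applies the discrete trace inequality, and closes with the energy identity \eqref{eq:interface-coer-sym}, and your $\Delta t$ power counting there is right. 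Two cautions, however: the intermediate a priori bounds you assert, $\|\ss(\lambda_h)+\alpha\ps(\lambda_h)I\|_{\O_i}\le C\,\Delta t\,\|\lambda_h^{\ud}\|_{\G_i}$ and $\|\zs(\lambda_h)\|_{\O_i}\le C\|\lambda_h^p\|_{\G_i}$, are not correct as stated --- the elasticity and flow subproblems are coupled through $\ps(\lambda_h)$, and the trace inequality injects an $h^{-1/2}$ and cross terms in both components of $\lambda_h$ into these bounds; one must estimate the total subdomain energy at once, as in \eqref{eq:cond-bound-1}. Moreover, the $c_0$-dependence of $C_1$ arises from extracting $\|\ps(\lambda_h)\|_{\O_i}\le c_0^{-1/2}\Delta t^{1/2}a_i^{n+1}(\lambda_h,\lambda_h)^{1/2}$ when splitting $\ss(\lambda_h)=(\ss(\lambda_h)+\alpha\ps(\lambda_h)I)-\alpha\ps(\lambda_h)I$; the problematic regime is $c_0$ small, so ``bounding using $c_0\le C$'' points in the wrong direction.

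The genuine gap is in the $c_0$-independent upper bound of \eqref{eq:eigenvalue-bound-2}. The $c_0$-dependence just described is a multiplicative factor $c_0^{-1/2}$ in front of the pressure norm, so no rebalancing of powers of $\Delta t$ --- ``pulling out the worst-case $\Delta t^{-1}$ prefactor'' --- can remove it. The paper's mechanism is a different, essential ingredient: it invokes the discrete Darcy inf-sup condition \eqref{eq:inf-sup-darcy} together with equation \eqref{eq:dd1-mfe4-star} to obtain the $c_0$-free pressure bound $\|\ps(\lambda_h)\|_{\O_i}\le C\left(\|\zs(\lambda_h)\|_{\O_i}+h^{-1/2}\|\lambda_h^p\|_{\G_i}\right)$, see \eqref{eq:pres-vel-ineq}; it is the resulting cross term $h^{-1}\|\lambda_h^p\|_{\G_i}\|\lambda_h^{\ud}\|_{\G_i}$ which, after Young's inequality, produces the extra factor $\Delta t^{-1}$ in \eqref{eq:eigenvalue-bound-2}. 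Your proposal supplies no substitute for this step, and without it the $c_0$-uniformity that you yourself flag as the delicate point cannot be established.
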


\begin{proof}
  The left inequality in \eqref{eq:eigenvalue-bound} and \eqref{eq:eigenvalue-bound-2}
  follows from \eqref{bound-lambda-u} and \eqref{bound-lambda-p}. To prove
  the right inequality, we use the definition of the interface operator
  (\ref{eq:interface-operator}) to obtain
\begin{align}
  a_{i}^{n+1} & (\lambda_{h},\lambda_{h})
  =\gnp[\ss(\lambda_{h})\,n_{i}]{\lambda_{h}^{\ud}}_{\G_{i}}
  -\gnp[\zs(\lambda_{h})\cdot n_{i}]{\lambda_{h}^{p}}_{\G_{i}}\nonumber \\
  & \le\|\ss(\lambda_{h})\,n_{i}\|_{\G_{i}}\|\lambda_{h}^{\ud}\|_{\G_{i}}
  +\|\zs(\lambda_{h})\cdot n_{i}\|_{\G_{i}} \|\lambda_h^p\|_{\G_i}  \nonumber \\
  & \le Ch^{-1/2}\left(\|\ss(\lambda_{h})\|_{\O_{i}}\|\lambda_{h}^{\ud}\|_{\G_{i}}
  +\|\zs(\lambda_{h})\|_{\O_{i}}\|\lambda_{h}^p\|_{\G_{i}}\right)\nonumber \\
  & \le Ch^{-1/2}\left( \big(\|\ss(\lambda_{h})+\alpha\ps(\lambda_{h})I\|_{\O_{i}}
  +\|\alpha\ps(\lambda_{h})I\| \big)\|\lambda_{h}^{\ud}\|_{\G_{i}}
  +\|\zs(\lambda_{h})\|_{\O_{i}}\|\lambda_{h}^p\|_{\G_{i}}\right) \nonumber \\
  & \le C h^{-1/2} a_{i}^{n+1}(\lambda_{h},\lambda_{h})^{1/2}
  \left(\Delta t^{1/2}\|\lambda_{h}^{\ud}\|_{\G_{i}} + \|\lambda_{h}^p\|_{\G_{i}}\right),
  \label{eq:cond-bound-1}
\end{align}
where for the second inequality we used the discrete trace inequality
for finite element functions $\varphi$,
\begin{equation}\label{trace}
  \|\varphi\|_{\Gamma_i} \le C h^{-1/2} \|\varphi\|_{\O_i},
\end{equation}
and the last inequality follows from \eqref{eq:interface-coer-sym}. We note that
the constant in the last inequality depends on $c_0$. This implies
the right inequality in \eqref{eq:eigenvalue-bound}.

To obtain the right inequality in \eqref{eq:eigenvalue-bound-2} with a
constant independent of $c_0$, we use the inf-sup condition
(\ref{eq:inf-sup-darcy}) and (\ref{eq:dd1-mfe4-star}):
\begin{align}
  \|\ps(\lambda_{h})\|_{\Omega_{i}} &
  \le C\sup_{0\ne q\in Z_{h,i}}\frac{\langle\dvr q,\ps(\lambda_{h})\rangle_{\Omega_{i}}}
      {\|q\|_{\dvr,\Omega_i}}
      = C\sup_{0\ne q\in Z_{h,i}}\frac{(K^{-1}\zs(\lah),q)_{\Omega_{i}}
        + \langle\lah^p,q\cdot n_i\rangle_{\Gamma_{i}}}{\|q\|_{\dvr,\Omega_i}}
        \nonumber \\
        & \le C\left(\|\zs(\lambda_h)\|_{\Omega_{i}}
        +h^{-1/2}\|\lah^p\|_{\G_{i}}\right),\label{eq:pres-vel-ineq}
\end{align}
where the last inequality uses \eqref{trace}.
Combining (\ref{eq:pres-vel-ineq}) with the next to last inequality in
(\ref{eq:cond-bound-1})
and using (\ref{eq:interface-coer-sym}), we get: 
\begin{align*}
  a_{i}^{n+1} & (\lambda_{h},\lambda_{h}) \nonumber \\
  & \le C h^{-1/2} \left( \big(\Delta t^{1/2} a_{i}(\lambda_{h},\lambda_{h})^{1/2} 
  + a_{i}(\lambda_{h},\lambda_{h})^{1/2} + h^{-1/2}\|\lambda_h^p\|_{\G_i}\big)\|\lambda_h^{\ud}\|_{\G_i}
  + a_{i}(\lambda_{h},\lambda_{h})^{1/2}\|\lambda_h^p\|_{\G_i} \right)\nonumber \\
  & \le C\left(\epsilon a_{i}^{n+1}(\lambda_{h},\lambda_{h})
  + \frac{1}{\epsilon}h^{-1}\Delta t^{-1}(\Delta t \|\lambda_h^{\ud}\|_{\G_i}^2
  + \|\lambda_h^p\|_{\G_i}) \right),
\end{align*}
using Young's inequality in the last inequality. Taking $\epsilon$ sufficiently small
implies the right inequality in \eqref{eq:eigenvalue-bound-2}.
\end{proof}

Theorem \ref{thm:intr-op-bound} provides upper and lower bounds on the
field of values of the interface operator, which can be used to
estimate the convergence of the interface GMRES solver. In particular,
let $\r_k = (\r_k^{\ud},\r_k^p)$ be the $k$-th residual of the GMRES
iteration for solving the interface problem (\ref{eq:interface-prob}).
Define $|\r_k|_\star^2 = \Delta t |\r_k^{\ud}|^2 + |\r_k^p|^2$, where
$|\cdot|$ denotes the Euclidean vector norm. The following corollary
to Theorem \ref{thm:intr-op-bound} follows from the field-of-values
analysis in \cite{StarkeFOV:97}.

\begin{cor} \label{cor:gmres-bound}
  For the $k$-th GMRES residual for solving (\ref{eq:interface-prob}), it holds that
\begin{equation}
\label{eq:fovbound}
|\r_k|_\star \leq  \left(\sqrt{1- (C_0/C_1)^2 h^2}\right)^k\, |\r_0 |_\star 
\end{equation}
and
\begin{equation}
\label{eq:fovbound-tilde}
|\r_k|_\star \leq \left(\sqrt{1- (\tilde C_0/\tilde C_1)^2 h^2 \Delta t ^2}\right)^k\, |\r_0 |_\star.
\end{equation}
\end{cor}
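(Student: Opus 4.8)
The plan is to read Corollary \ref{cor:gmres-bound} as a direct application of the field-of-values GMRES estimate of \cite{StarkeFOV:97}, whose two inputs are a coercivity (field-of-values) lower bound and an \emph{operator-norm} upper bound for the interface operator, both measured in the weighted inner product that induces $|\cdot|_\star$. The bulk of the work is already contained in Theorem \ref{thm:intr-op-bound}; what remains is to recast its one-sided quadratic estimates into the form the Starke result requires.

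First I would fix the inner product in which GMRES is run. Identifying $\lambda_h\in\Lambda_h$ with its coefficient vector, I take the weighted Euclidean product $\langle\cdot,\cdot\rangle_\star$ with $|\lambda_h|_\star^2=\Delta t\,|\lambda_h^{\ud}|^2+|\lambda_h^p|^2$, and let $\mathcal{A}$ be the matrix of $a^{n+1}(\cdot,\cdot)$ relative to $\langle\cdot,\cdot\rangle_\star$, so that $a^{n+1}(\lambda_h,\mu)=\langle\mathcal{A}\lambda_h,\mu\rangle_\star$ and \eqref{eq:interface-prob} reads $\mathcal{A}\lambda_h^{n+1}=b$. Because the trace mesh is quasi-uniform, the interface mass matrix is spectrally equivalent to $h^{d-1}I$ on each of the $\dot u$- and $p$-blocks; hence the $L^2(\G)$-weighted quantity $\Delta t\,\|\lambda_h^{\ud}\|_\G^2+\|\lambda_h^p\|_\G^2$ of Theorem \ref{thm:intr-op-bound} and $|\lambda_h|_\star^2$ differ only by one common factor $\sim h^{d-1}$. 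Since this scaling is identical for both blocks, it cancels in the field-of-values ratio $\gamma/\Gamma$, so the constants of Theorem \ref{thm:intr-op-bound} may be used unchanged (up to $O(1)$ equivalence constants absorbed into $C_0,C_1,\tilde C_0,\tilde C_1$).

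Coercivity is then immediate from the left inequalities of \eqref{eq:eigenvalue-bound} and \eqref{eq:eigenvalue-bound-2}: $\mathrm{Re}\,\langle\mathcal{A}\lambda_h,\lambda_h\rangle_\star=a^{n+1}(\lambda_h,\lambda_h)\ge C_0\,|\lambda_h|_\star^2$ (resp.\ $\tilde C_0$), identifying $\gamma=C_0$ (resp.\ $\tilde C_0$). The main obstacle is the operator-norm bound $|\mathcal{A}\lambda_h|_\star\le\Gamma\,|\lambda_h|_\star$: the right inequalities of Theorem \ref{thm:intr-op-bound} bound only the quadratic form $a^{n+1}(\lambda_h,\lambda_h)$, i.e.\ the largest eigenvalue of the symmetric part $\tfrac12(\mathcal A+\mathcal A^T)$, and since $\mathcal A$ is nonsymmetric this does not control $\|\mathcal A\|_\star$, which also sees the skew part. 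I would therefore promote the quadratic bound to a genuine continuity estimate by repeating the computation \eqref{eq:cond-bound-1} with a general multiplier $\mu$ in the boundary pairing. Starting from $a_i^{n+1}(\lambda_h,\mu)=\gnp[\ss(\lah)\,n_i]{\mu^{\ud}}_{\G_i}-\gnp[\zs(\lah)\cdot n_i]{\mu^p}_{\G_i}$, the discrete trace inequality \eqref{trace} together with the subdomain bounds relating $\|\ss(\lah)\|_{\O_i}$ and $\|\zs(\lah)\|_{\O_i}$ to $a_i^{n+1}(\lambda_h,\lambda_h)^{1/2}$ — exactly as in the passage from the third to the last line of \eqref{eq:cond-bound-1} — yield $a_i^{n+1}(\lambda_h,\mu)\le C h^{-1/2}a_i^{n+1}(\lambda_h,\lambda_h)^{1/2}(\Delta t^{1/2}\|\mu^{\ud}\|_{\G_i}+\|\mu^p\|_{\G_i})$. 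Combining this with the already-established quadratic upper bound and summing over subdomains gives $a^{n+1}(\lambda_h,\mu)\le C_1 h^{-1}|\lambda_h|_\star\,|\mu|_\star$; inserting the inf-sup control \eqref{eq:pres-vel-ineq} of $\|\ps(\lah)\|_{\O_i}$ precisely as in the proof of \eqref{eq:eigenvalue-bound-2} gives instead $a^{n+1}(\lambda_h,\mu)\le\tilde C_1 h^{-1}\Delta t^{-1}|\lambda_h|_\star\,|\mu|_\star$ with a constant independent of $c_0$. Taking the supremum over $\mu$ identifies $\Gamma=C_1 h^{-1}$ (resp.\ $\tilde C_1 h^{-1}\Delta t^{-1}$).

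Finally I would substitute into the Starke estimate $|\r_k|_\star\le(1-\gamma^2/\Gamma^2)^{k/2}|\r_0|_\star$: with $\gamma=C_0$, $\Gamma=C_1 h^{-1}$ one has $\gamma^2/\Gamma^2=(C_0/C_1)^2 h^2$, yielding \eqref{eq:fovbound}, and with $\gamma=\tilde C_0$, $\Gamma=\tilde C_1 h^{-1}\Delta t^{-1}$ one has $\gamma^2/\Gamma^2=(\tilde C_0/\tilde C_1)^2 h^2\Delta t^2$, yielding \eqref{eq:fovbound-tilde}. The only genuinely new ingredient beyond Theorem \ref{thm:intr-op-bound} is the continuity step, which is routine given that $\mu$ enters $a^{n+1}$ linearly through the interface pairing; the rest is bookkeeping of the norm equivalence and of the $h$- and $\Delta t$-powers.
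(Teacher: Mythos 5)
Your proposal is correct and follows the route the paper intends: the paper offers no written proof of Corollary \ref{cor:gmres-bound} beyond the citation of the field-of-values analysis in \cite{StarkeFOV:97}, and your argument is precisely the instantiation of that analysis with $\gamma=C_0$ (resp.\ $\tilde C_0$) and $\Gamma=C_1h^{-1}$ (resp.\ $\tilde C_1 h^{-1}\Delta t^{-1}$). The one substantive point where you go beyond what is written is your observation that the right-hand inequalities of Theorem \ref{thm:intr-op-bound} bound only the quadratic form $a^{n+1}(\lambda_h,\lambda_h)$ --- i.e.\ the symmetric part of the interface operator --- whereas the Elman/Starke estimate $|\r_k|_\star\le(1-\gamma^2/\Gamma^2)^{k/2}|\r_0|_\star$ requires a genuine operator-norm (equivalently, bilinear-form continuity) bound, and for a nonsymmetric $\mathcal A$ the former does not imply the latter. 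Your fix is the right one and works: repeating \eqref{eq:cond-bound-1} with a general second argument $\mu$ gives $a_i^{n+1}(\lambda_h,\mu)\le Ch^{-1/2}a_i^{n+1}(\lambda_h,\lambda_h)^{1/2}(\Delta t^{1/2}\|\mu^{\ud}\|_{\G_i}+\|\mu^p\|_{\G_i})$, and inserting the already-proved quadratic upper bound and summing with Cauchy--Schwarz over subdomains yields the continuity constants $C_1h^{-1}$ and $\tilde C_1h^{-1}\Delta t^{-1}$ claimed. Your handling of the passage from the $L^2(\G)$-weighted norms of Theorem \ref{thm:intr-op-bound} to the weighted Euclidean coefficient norm $|\cdot|_\star$ via spectral equivalence of the interface mass matrix with $h^{d-1}I$ on a quasi-uniform trace mesh is also the standard bookkeeping the paper leaves implicit. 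In short: same approach as the paper, with a genuine (and correctly patched) gap in the continuity step made explicit.
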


\begin{remark}
  Bounds \eqref{eq:fovbound} and \eqref{eq:fovbound-tilde} imply
  convergence of the interface GMRES iteration that is independent of
  either $\Delta t$ or $c_0$, but not both. In Section~\ref{sec:numer}
  we present numerical results showing that the GMRES convergence is
  robust with respect to both $c_0$ and $\Delta t$.
\end{remark}

\section{Split methods}\label{sec:split}

In this section, we consider two popular splitting methods to decouple
the fully coupled poroelastic problem, namely the drained split (DS)
and fixed stress (FS) methods \cite{KimDS2011,KimFS2011}. We show,
using energy bounds, that these two methods are unconditionally stable
in our MFE formulation. We then define, at each time step, a domain
decomposition algorithm for the flow and mechanics equations
separately. Domain decomposition techniques for the flow
\cite{glowinski1988domain} and mechanics \cite{eldar_elastdd}
components have already been studied in previous works.

\subsection{Drained split}\label{subsec:Drained-Split}

The DS method consists of solving the mechanics problem first, with
the value of pressure from the previous time step.  Afterward,
the flow problem is solved using the new values of the stress
tensor. The DS method for the classical Biot formulation of
poroelasticity is known to require certain conditions on the
parameters for stability \cite{KimDS2011}. In the setting of our mixed
formulation, we show that this is not necessary and the method is
unconditionally stable, see also \cite{YiSplit}.  For simplicity, we
do the analysis with zero source terms.

The DS method results in the problem: for $n=-1,0,\ldots,N-1$,
find $(\sigma_{h}^{n+1},u_{h}^{n+1},\g_{h}^{n+1},z_{h}^{n+1}, p_{h}^{n+1})\in\X_{h}\times V_{h}\times\W_{h}\times Z_{h}\times W_{h}$ such that
\begin{align}
 & \inp[A\sigma_{h}^{n+1}]{\tau}+\inp[u_{h}^{n+1}]{\dvr{\tau}}+\inp[\gamma_{h}^{n+1}]{\tau}=-\inp[A\a p_{h}^{n}I]{\tau}, & \forall\tau\in\X_{h},\label{eq:ds1}\\
 & \inp[\dvr{\sigma_{h}^{n+1}}]{v}=0, & \forall v\in V_{h},\label{eq:ds2}\\
 & \inp[\sigma_{h}^{n+1}]{\xi}=0, & \forall\xi\in\W_{h},\label{eq:ds3}
\end{align}
and
\begin{align}
 & \inp[\K z_{h}^{n+1}]{q}-\inp[p_{h}^{n+1}]{\dvr{q}}=0, & \forall q\in Z_{h},\label{eq:ds4}\\
  & c_{0}\inp[\frac{p_{h}^{n+1}-p_{h}^{n}}{\Dt}]{w}
  +\a\inp[A\a\frac{p_{h}^{n+1}-p_{h}^{n}}{\Dt}I]{w I}+\inp[\dvr{z_{h}^{n+1}}]{w}=-\a\inp[A\frac{\sigma_{h}^{n+1}-\sigma_{h}^{n}}{\Dt}]{wI}, & \forall w\in W_{h},\label{eq:ds5}
\end{align}
where \eqref{eq:ds1}--\eqref{eq:ds4} hold for $n=-1,0,\ldots,N-1$ with $p_{h}^{-1} := p_{h}^{0}$,
and \eqref{eq:ds5} holds for $n=0,\ldots,N-1$. We note that solving
\eqref{eq:ds1}--\eqref{eq:ds4} for $n=-1$ provides initial data $\sigma_h^0$, $u_h^0$, $\g_h^0$,
and $z_h^0$. 

\subsubsection{Stability analysis for drained split}
The following theorem shows that the drained split scheme is unconditionally stable.

\begin{thm}\label{thm:ds}
For the solution
  $(\sigma_{h}^{n+1},u_{h}^{n+1},\g_{h}^{n+1},z_{h}^{n+1},p_{h}^{n+1})_{0\le n\le N-1}$
of the system  (\ref{eq:ds1})--(\ref{eq:ds5}), there exists a constant $C$ independent of
$h$, $\Delta t$, $c_0$, and $a_{\min}$ such that
\begin{align*}
&  \sum_{n=0}^{N-1}\frac{c_{0}}{\Delta t}\|p_{h}^{n+1}-p_{h}^{n}\|^{2}
  + \max_{0 \le n \le N-1}\left(\|z_{h}^{n+1}\|^2 + \|p_h^{n+1}\|^2 + \|A^{1/2}\sigma_h^{n+1}\|^2
  + \|u_{h}^{n+1}\|^2 +\|\gamma_{h}^{n+1}\|^2 \right) \\
& \qquad\qquad \le C \left(\|p_h^0\|^2 + \|z_h^0\|^2 \right).
\end{align*}
\end{thm}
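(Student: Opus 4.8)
The plan is to combine a mechanics-increment estimate, a telescoping energy identity for the flow step, and inf--sup arguments for the remaining norms, working throughout with the square root $A^{1/2}$ so that no constant depends on $a_{\min}$. First I would difference \eqref{eq:ds1}--\eqref{eq:ds3} between levels $n$ and $n+1$ and test the differenced equations with the increments $\sigma_{h}^{n+1}-\sigma_{h}^{n}$, $u_{h}^{n+1}-u_{h}^{n}$, and $\gamma_{h}^{n+1}-\gamma_{h}^{n}$. The divergence and rotation couplings drop out by the differenced \eqref{eq:ds2}--\eqref{eq:ds3}, leaving
\[
\|A^{1/2}(\sigma_{h}^{n+1}-\sigma_{h}^{n})\|\le \a\,\|A^{1/2}(p_{h}^{n}-p_{h}^{n-1})I\|,
\]
which measures the stress error caused by the explicit pressure lag in \eqref{eq:ds1}; with the convention $p_{h}^{-1}:=p_{h}^{0}$ the right-hand side is zero for $n=0$.

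Next I would test the flow balance \eqref{eq:ds5} with the pressure increment $p_{h}^{n+1}-p_{h}^{n}$ (not with $p_{h}^{n+1}$), which produces the two dissipation terms $\tfrac{c_{0}}{\Dt}\|p_{h}^{n+1}-p_{h}^{n}\|^{2}$ and $\tfrac{\a^{2}}{\Dt}\|A^{1/2}(p_{h}^{n+1}-p_{h}^{n})I\|^{2}$ on the left. Differencing \eqref{eq:ds4} and testing with $z_{h}^{n+1}$ turns the divergence term into $\inp[\K(z_{h}^{n+1}-z_{h}^{n})]{z_{h}^{n+1}}$, which polarizes into the telescoping difference $\tfrac12(\|\K^{1/2}z_{h}^{n+1}\|^{2}-\|\K^{1/2}z_{h}^{n}\|^{2})$ plus a nonnegative remainder. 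The coupling term $-\tfrac{\a}{\Dt}\inp[A(\sigma_{h}^{n+1}-\sigma_{h}^{n})]{(p_{h}^{n+1}-p_{h}^{n})I}$ is estimated by Cauchy--Schwarz, the increment bound above, and Young's inequality, giving $\tfrac{\a^{2}}{2\Dt}\|A^{1/2}(p_{h}^{n}-p_{h}^{n-1})I\|^{2}+\tfrac{\a^{2}}{2\Dt}\|A^{1/2}(p_{h}^{n+1}-p_{h}^{n})I\|^{2}$; the second half is absorbed by the left side, and the first half is exactly this quantity one index earlier, so the whole inequality telescopes. Summing over $n$ and using that the initial increment vanishes yields, via \eqref{eq:coercivity-flow},
\[
\sum_{n=0}^{N-1}\frac{c_{0}}{\Dt}\|p_{h}^{n+1}-p_{h}^{n}\|^{2}+\max_{1\le M\le N}\|z_{h}^{M}\|^{2}\le C\|z_{h}^{0}\|^{2}.
\]

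It then remains to bound $\|p_{h}^{n+1}\|$, $\|A^{1/2}\sigma_{h}^{n+1}\|$, $\|u_{h}^{n+1}\|$, and $\|\gamma_{h}^{n+1}\|$, and I would do this by inf--sup rather than by energy, so as to keep the constants uniform in $c_{0}$ and $a_{\min}$. The Darcy inf--sup \eqref{eq:inf-sup-darcy} applied to \eqref{eq:ds4} gives $\|p_{h}^{n+1}\|\le C\|z_{h}^{n+1}\|$, so the pressure is controlled by the already-bounded velocity. Testing the undifferenced \eqref{eq:ds1}--\eqref{eq:ds3} with $\sigma_{h}^{n+1},u_{h}^{n+1},\gamma_{h}^{n+1}$ gives $\|A^{1/2}\sigma_{h}^{n+1}\|\le \a\|A^{1/2}p_{h}^{n}I\|\le C\|p_{h}^{n}\|$, and the elasticity inf--sup \eqref{eq:inf-sup-elast} applied to \eqref{eq:ds1} gives $\|u_{h}^{n+1}\|+\|\gamma_{h}^{n+1}\|\le C\|A^{1/2}(\sigma_{h}^{n+1}+\a p_{h}^{n}I)\|\le C\|p_{h}^{n}\|$, where the upper bound in \eqref{eq:coercivity-elast} is used to pass from the $A^{1/2}$-norm to the $L^{2}$-norm. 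Chaining $\|p_{h}^{n}\|\le C\|z_{h}^{n}\|\le C\|z_{h}^{0}\|$ for $n\ge1$ and $\|p_{h}^{0}\|$ for $n=0$ collapses every max-norm term to $C(\|p_{h}^{0}\|^{2}+\|z_{h}^{0}\|^{2})$, which together with the flow estimate gives the claim.

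The main obstacle is the off-diagonal coupling in the flow step: because the splitting evaluates the stress at the lagged pressure, testing the mass balance with $p_{h}^{n+1}$ leaves a term that cannot be absorbed, and the $\tfrac1\Dt$ scalings obstruct a naive telescoping. The two decisive moves are testing with the \emph{increment} $p_{h}^{n+1}-p_{h}^{n}$, which exposes exactly the dissipation needed both to absorb half of the coupling term and to make the other half telescope against the previous step, and extracting the pressure $L^{2}$ bound separately from the Darcy inf--sup, which is what makes the estimate robust for $c_{0}=0$ and independent of $a_{\min}$.
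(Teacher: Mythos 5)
Your proposal is correct and follows essentially the same route as the paper's proof: differencing \eqref{eq:ds1}--\eqref{eq:ds4} and testing with increments to get $\|A^{1/2}(\sigma_h^{n+1}-\sigma_h^n)\|\le\a\|A^{1/2}(p_h^n-p_h^{n-1})I\|$, testing \eqref{eq:ds5} with $p_h^{n+1}-p_h^n$ and the differenced Darcy equation with $z_h^{n+1}$ so that Young's inequality plus the convention $p_h^{-1}:=p_h^0$ yields the telescoping sum, and then recovering $p_h$, $\sigma_h$, $u_h$, $\gamma_h$ via the Darcy and elasticity inf--sup conditions exactly as in \eqref{eq:p-bound}--\eqref{eq:u-gamma-bound}. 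No gaps.
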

\begin{proof}
We subtract two successive time steps for equations \eqref{eq:ds1}--\eqref{eq:ds4}, obtaining,
for $n=0,\ldots,N-1$,
\begin{align}
  & \inp[A(\sigma_{h}^{n+1}-\sigma_{h}^{n})]{\tau}+\inp[u_{h}^{n+1}-u_{h}^{n}]{\dvr{\tau}}
  +\inp[\gamma_{h}^{n+1}-\gamma_{h}^{n}]{\tau}=-\inp[A\a(p_{h}^{n}-p_{h}^{n-1})I]{\tau},
  & \forall \, \tau\in\X_{h},\label{eq:ds1-1}\\
  & \inp[\dvr({\sigma_{h}^{n+1}-\sigma_{h}^{n})}]{v}=0,
  & \forall \, v\in V_{h},\label{eq:ds2-1}\\
 & \inp[\sigma_{h}^{n+1}-\sigma_{h}^{n}]{\xi}=0, & \forall \, \xi \in\W_{h},\label{eq:ds3-1}\\
  & \inp[\K(z_{h}^{n+1}-z_{h}^{n})]{q}-\inp[p_{h}^{n+1}-p_{h}^{n}]{\dvr{q}}=0,
  & \forall \, q \in Z_{h}.
  \label{eq:ds4-1}
\end{align}
Taking $\tau=\sigma_{h}^{n+1}-\sigma_{h}^{n}$, $v=u_{h}^{n+1}-u_{h}^{n}$
and $\xi=\gamma_{h}^{n+1}-\gamma_{h}^{n}$ in \eqref{eq:ds1-1}--\eqref{eq:ds3-1} and summing gives
\begin{equation*}
\inp[A(\sigma_{h}^{n+1}-\sigma_{h}^{n})]{\sigma_{h}^{n+1}-\sigma_{h}^{n}} 
= -\inp[A\a(p_{h}^{n}-p_{h}^{n-1})I]{\sigma_{h}^{n+1}-\sigma_{h}^{n}},
\end{equation*}
implying
\begin{equation}\label{eq:ds-st-2}
\|A^{\frac{1}{2}}(\sigma_{h}^{n+1}-\sigma_{h}^{n})\| \le \alpha \|A^{\frac{1}{2}}(p_{h}^{n}-p_{h}^{n-1})I\|.
\end{equation}
Taking $q=z_{h}^{n+1}$ in  (\ref{eq:ds4-1}) and $w=p_{h}^{n+1}-p_{h}^{n}$
in  (\ref{eq:ds5}) and summing results in
\begin{align*}
& c_{0}\inp[\frac{p_{h}^{n+1}-p_{h}^{n}}{\Dt}]{p_{h}^{n+1}-p_{h}^{n}}
  + \a\inp[A\a\frac{p_{h}^{n+1}-p_{h}^{n}}{\Dt}I]{(p_{h}^{n+1}-p_{h}^{n})I}
  +\inp[\K(z_{h}^{n+1}-z_{h}^{n})]{z_{h}^{n+1}} \\
  & =\a\inp[A\frac{\sigma_{h}^{n+1}-\sigma_{h}^{n}}{\Dt}]{(p_{h}^{n+1}-p_{h}^{n})I}\le\frac{1}{2\Delta t}\|A^{\frac{1}{2}}(\sigma_{h}^{n+1}-\sigma_{h}^{n})\|^{2}
  +\frac{\alpha^{2}}{2\Delta t} \|A^{\frac{1}{2}}(p_{h}^{n+1}-p_{h}^{n})I\|^2,
\end{align*}
which, combined with \eqref{eq:ds-st-2}, implies
\begin{align*}
&  \frac{c_{0}}{\Delta t}\|p_{h}^{n+1}-p_{h}^{n}\|^{2}
  +\frac{\a^{2}}{2\Delta t} \|A^{\frac{1}{2}}(p_{h}^{n+1}-p_{h}^{n})I\|^2
  +\frac{1}{2}\left(\|K^{-\frac12}(z_{h}^{n+1}-z_{h}^{n})\|^{2}
  + \|K^{-\frac12}z_{h}^{n+1}\|^{2} - \|K^{-\frac12}z^{n}_h\|^{2}\right) \\
& \qquad  \le \frac{\alpha^{2}}{2\Delta t} \|A^{\frac{1}{2}}(p_{h}^{n}-p_{h}^{n-1})I\|^2.
\end{align*}
Summing over $n$ from 0 to $k-1$ for any $k = 1,\ldots,N$ and using that $p_h^{-1} = p_h^0$ 
gives
\begin{align*}
  \sum_{n=0}^{k-1}\frac{2c_{0}}{\Delta t}\|p_{h}^{n+1}-p_{h}^{n}\|^{2}
  +\frac{\a^{2}}{\Delta t}\|A^{\frac{1}{2}}(p_{h}^{k}-p_{h}^{k-1})I\|^{2}
  + \|K^{-\frac12}z_{h}^{k}\|^2
  + \sum_{n=0}^{k-1}\|K^{-\frac12}(z_{h}^{n+1}-z_{h}^{n})\|^{2}
  \le\|K^{-\frac12}z_{h}^{0}\|^2. \label{eq:ds-st-3}
\end{align*}
We note that the second and fourth terms
are suboptimal with respect to $\Delta t$. Neglecting these terms and using
\eqref{eq:coercivity-flow}, we obtain
\begin{equation}\label{eq:ds-st-3}
  \sum_{n=0}^{k-1}\frac{c_{0}}{\Delta t}\|p_{h}^{n+1}-p_{h}^{n}\|^{2}
  + \|z_{h}^{k}\|^2 \le C \|z_{h}^{0}\|^2, \quad  k = 1,\ldots,N.
\end{equation}
To obtain control on $p_h$ independent of $c_0$, we use the inf-sup condition
\eqref{eq:inf-sup-darcy} and \eqref{eq:ds4}:
\begin{equation}\label{eq:p-bound}
  \|p_h^{n+1}\| \le C\sup_{0\ne q\in Z_{h}}\frac{(\dvr q,p_h^{n+1})}{\|q\|_{\text{div}}}
  = C \sup_{0\ne q\in Z_{h}}\frac{(\K z_{h}^{n+1},q)}{\|q\|_{\text{div}}} \le C \|z_h^{n+1}\|,
  \quad  n = 0,\ldots,N-1.
\end{equation}
Taking $\tau = \sigma_h^{n+1}$, $v = u_h^{n+1}$, and $\xi = \gamma_h^{n+1}$ in
\eqref{eq:ds1}--\eqref{eq:ds3} gives
\begin{equation}\label{eq:sigma-bound}
\|A^{1/2}\sigma_h^{n+1}\| \le C \|p_h^n\|, \quad  n = 0,\ldots,N-1.
  \end{equation}
For the stability of $u_{h}$ and $\gamma_{h}$, the  inf-sup condition
(\ref{eq:inf-sup-elast}) combined with (\ref{eq:ds1}) gives:
\begin{align}
  \|u_{h}^{n+1}\|+\|\gamma_{h}^{n+1}\|
  & \le C \sup_{0\ne\tau\in\mathbb{X}_{h}}\frac{\inp[u_{h}^{n+1}]{\dvr{\tau}}
    +\inp[\gamma_{h}^{n+1}]{\tau}}{\|\tau\|_{\text{div}}}
  = -C \sup_{0\ne\tau\in\mathbb{X}_{h}}\frac{\inp[A\sigma_{h}^{n+1}]{\tau}
    +\inp[A\a p_{h}^{n}I]{\tau}}{\|\tau\|_{\text{div}}} \nonumber \\
  & \le C \left(\|A^{\frac{1}{2}}\sigma_{h}^{n+1}\| + \|p_{h}^{n}\|\right), \quad n = 0,\ldots,N-1.
  \label{eq:u-gamma-bound}
\end{align}
A combination of bounds \eqref{eq:ds-st-3}--\eqref{eq:u-gamma-bound}
completes the proof of the theorem.

\end{proof}

\subsection{Fixed stress}

The FS decoupling method solves the flow problem first, with
the value of $\sigma$ fixed from the previous time step.  After
that, the mechanics problem is solved using the new values of the
pressure as data \cite{KimFS2011}. We again assume in the analysis
zero source terms for simplicity. The method is: for $n=-1,0,\ldots,N-1$,
find $(\sigma_{h}^{n+1},u_{h}^{n+1},\g_{h}^{n+1},z_{h}^{n+1},p_{h}^{n+1})\in\X_{h}
\times V_{h}\times\W_{h}\times Z_{h}\times W_{h}$ such that
\begin{align}
 & \inp[\K z_{h}^{n+1}]{q}-\inp[p_{h}^{n+1}]{\dvr{q}}=0, & \forall q\in Z_{h},\label{eq:fs4}\\
  & c_{0}\inp[\frac{p_{h}^{n+1}-p_{h}^{n}}{\Dt}]{w}+\a\inp[A\a\frac{p_{h}^{n+1}-p_{h}^{n}}{\Dt}I]{w I}
  +\inp[\dvr{z_{h}^{n+1}}]{w} \nonumber \\
  & \qquad\qquad =-\a\inp[A\frac{\sigma_{h}^{n}-\sigma_{h}^{n-1}}{\Dt}]{wI}, & \forall w\in W_{h},\label{eq:fs5}
\end{align}
and
\begin{align}
 & \inp[A\sigma_{h}^{n+1}]{\tau}+\inp[u_{h}^{n+1}]{\dvr{\tau}}+\inp[\gamma_{h}^{n+1}]{\tau}=-\inp[A\a p_{h}^{n+1}I]{\tau}, & \forall\tau\in\X_{h},\label{eq:fs1}\\
 & \inp[\dvr{\sigma_{h}^{n+1}}]{v}=0, & \forall v\in V_{h},\label{eq:fs2}\\
 & \inp[\sigma_{h}^{n+1}]{\xi}=0, & \forall\xi\in\W_{h},\label{eq:fs3}
\end{align}
where the equations \eqref{eq:fs4} and \eqref{eq:fs1}--\eqref{eq:fs3} hold for
$n=-1,0,\ldots,N-1$ and \eqref{eq:fs5} holds for $n=0,\ldots,N-1$
with $\sigma_{h}^{-1}:=\sigma_{h}^{0}$. Solving \eqref{eq:fs4} and
\eqref{eq:fs1}--\eqref{eq:fs3} for $n=-1$ provides initial data
$\sigma_h^0$, $u_h^0$, $\g_h^0$, and $z_h^0$. 

\subsubsection{Stability analysis for fixed stress}

The following theorem shows that the fixed stress scheme is unconditionally stable.

\begin{thm}\label{thm:fs}
For the solution
  $(\sigma_{h}^{n+1},u_{h}^{n+1},\g_{h}^{n+1},z_{h}^{n+1},p_{h}^{n+1})_{0\le n\le N-1}$
of the system  (\ref{eq:fs4})--(\ref{eq:fs3}), there exists a constant $C$ independent of
$h$, $\Delta t$, $c_0$, and $a_{\min}$ such that
\begin{align*}
&  \sum_{n=0}^{N-1}\frac{c_{0}}{\Delta t}\|p_{h}^{n+1}-p_{h}^{n}\|^{2}
  + \max_{0 \le n \le N-1}\left(\|z_{h}^{n+1}\|^2 + \|p_h^{n+1}\|^2 + \|A^{1/2}\sigma_h^{n+1}\|^2
  + \|u_{h}^{n+1}\|^2 +\|\gamma_{h}^{n+1}\|^2 \right) 
\le C \|z_h^0\|^2.
\end{align*}
\end{thm}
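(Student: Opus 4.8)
The plan is to follow the energy argument of Theorem~\ref{thm:ds} almost verbatim, the only structural change being that in fixed stress the one-step lag sits on the stress entering the flow equation rather than on the pressure entering the mechanics equation. First I would subtract two consecutive time levels of the mechanics block \eqref{eq:fs1}--\eqref{eq:fs3} and test the resulting identities with $\tau=\sigma_h^{n+1}-\sigma_h^n$, $v=u_h^{n+1}-u_h^n$, and $\xi=\gamma_h^{n+1}-\gamma_h^n$. Because \eqref{eq:fs1} carries the current pressure $p_h^{n+1}$ on its right-hand side, the displacement and rotation terms cancel through the differenced \eqref{eq:fs2}--\eqref{eq:fs3}, and Cauchy--Schwarz on the remaining identity $(A(\sigma_h^{n+1}-\sigma_h^n),\sigma_h^{n+1}-\sigma_h^n)=-(A\alpha(p_h^{n+1}-p_h^n)I,\sigma_h^{n+1}-\sigma_h^n)$ yields the stress-increment bound $\|A^{1/2}(\sigma_h^{n+1}-\sigma_h^n)\|\le\alpha\|A^{1/2}(p_h^{n+1}-p_h^n)I\|$, the exact analogue of \eqref{eq:ds-st-2}. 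Keeping everything in the $A^{1/2}$-weighted norm here is what will keep the final constant independent of $a_{\min}$.

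Next I would perform the flow energy estimate. Subtracting consecutive levels of \eqref{eq:fs4} and testing with $q=z_h^{n+1}$ gives $(\dvr z_h^{n+1},p_h^{n+1}-p_h^n)=(\K(z_h^{n+1}-z_h^n),z_h^{n+1})$, while choosing $w=p_h^{n+1}-p_h^n$ in \eqref{eq:fs5} isolates the storativity and pressure-increment terms together with the velocity term, for which I would use the polarization $(\K(z_h^{n+1}-z_h^n),z_h^{n+1})=\tfrac{1}{2}(\|K^{-1/2}(z_h^{n+1}-z_h^n)\|^2+\|K^{-1/2}z_h^{n+1}\|^2-\|K^{-1/2}z_h^n\|^2)$. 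The point specific to fixed stress is that the right-hand side of \eqref{eq:fs5} is $-\frac{\alpha}{\Delta t}(A(\sigma_h^n-\sigma_h^{n-1}),(p_h^{n+1}-p_h^n)I)$, i.e. it contains the \emph{previous} stress increment; applying the stress-increment bound at index $n$ and a Young inequality converts it into $\frac{\alpha^2}{2\Delta t}\|A^{1/2}(p_h^n-p_h^{n-1})I\|^2$, producing exactly the one-step-delayed pressure-increment term that drives the telescoping in the drained split analysis.

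Using the convention $\sigma_h^{-1}:=\sigma_h^0$ the delayed term vanishes at $n=0$, so summing the per-step inequality over $n=0,\dots,k-1$ telescopes the $A^{1/2}$ pressure-increment contributions against the delayed term and the velocity term against itself, leaving
\begin{equation*}
\sum_{n=0}^{k-1}\frac{2c_0}{\Delta t}\|p_h^{n+1}-p_h^n\|^2+\frac{\alpha^2}{\Delta t}\|A^{1/2}(p_h^k-p_h^{k-1})I\|^2+\|K^{-1/2}z_h^k\|^2+\sum_{n=0}^{k-1}\|K^{-1/2}(z_h^{n+1}-z_h^n)\|^2\le\|K^{-1/2}z_h^0\|^2.
\end{equation*}
Discarding the second and fourth terms (which are suboptimal in $\Delta t$) and invoking \eqref{eq:coercivity-flow} gives $\sum_{n=0}^{k-1}\frac{c_0}{\Delta t}\|p_h^{n+1}-p_h^n\|^2+\|z_h^k\|^2\le C\|z_h^0\|^2$ for all $k=1,\dots,N$.

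Finally I would recover the remaining quantities by the same three steps as in Theorem~\ref{thm:ds}: the inf-sup condition \eqref{eq:inf-sup-darcy} with \eqref{eq:fs4} gives $\|p_h^{n+1}\|\le C\|z_h^{n+1}\|$; testing \eqref{eq:fs1}--\eqref{eq:fs3} with $\sigma_h^{n+1},u_h^{n+1},\gamma_h^{n+1}$ gives $\|A^{1/2}\sigma_h^{n+1}\|\le C\|p_h^{n+1}\|$; and the inf-sup condition \eqref{eq:inf-sup-elast} with \eqref{eq:fs1} gives $\|u_h^{n+1}\|+\|\gamma_h^{n+1}\|\le C(\|A^{1/2}\sigma_h^{n+1}\|+\|p_h^{n+1}\|)$. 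Since fixed stress couples the mechanics to the current pressure, each bound closes on $p_h^{n+1}$, and chaining them through $\|z_h^{n+1}\|\le C\|z_h^0\|$ controls every term by $\|z_h^0\|^2$ alone --- this is why the right-hand side here is cleaner than the $\|p_h^0\|^2+\|z_h^0\|^2$ of the drained split. I expect the only real difficulty to be bookkeeping: verifying that the lagged flow coupling together with $\sigma_h^{-1}=\sigma_h^0$ aligns the telescoping so that nothing on the right survives except $\|z_h^0\|^2$, which is precisely what gives unconditional stability in $\Delta t$, $c_0$, and $a_{\min}$ without the parameter restriction known for the non-mixed fixed stress scheme.
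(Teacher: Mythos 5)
Your proposal is correct and follows essentially the same route as the paper's proof: differencing consecutive time levels of the mechanics block to get $\|A^{1/2}(\sigma_h^{n+1}-\sigma_h^n)\|\le\alpha\|A^{1/2}(p_h^{n+1}-p_h^n)I\|$, testing the differenced Darcy equation with $z_h^{n+1}$ and \eqref{eq:fs5} with $p_h^{n+1}-p_h^n$, telescoping with the lagged stress increment (vanishing at $n=0$ by the convention $\sigma_h^{-1}=\sigma_h^0$, equivalently $p_h^{-1}=p_h^0$ as the paper phrases it), and then recovering $p_h$, $\sigma_h$, $u_h$, $\gamma_h$ via the two inf-sup conditions exactly as in \eqref{eq:p-bound-fs}--\eqref{eq:u-gamma-bound-fs}. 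Your observation about why the right-hand side reduces to $C\|z_h^0\|^2$ alone (all recovery bounds close on $p_h^{n+1}$ rather than $p_h^n$) is accurate and consistent with the paper.
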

\begin{proof}
The proof is similar to that of the drained split scheme.
Taking the difference of two successive time steps for equations
(\ref{eq:fs1})--(\ref{eq:fs3}) and  (\ref{eq:fs4}), we obtain, for $n=0,\ldots,N-1$,
\begin{align}
  & \inp[A(\sigma_{h}^{n+1}-\sigma_{h}^{n})]{\tau}+\inp[u_{h}^{n+1}-u_{h}^{n}]{\dvr{\tau}}
  +\inp[\gamma_{h}^{n+1}-\gamma_{h}^{n}]{\tau} \nonumber \\
  & \qquad\qquad +\inp[A\a(p_{h}^{n+1}-p_{h}^{n})I]{\tau}=0, & \forall\tau\in\X_{h},\label{eq:fs1-1}\\
 & \inp[\dvr({\sigma_{h}^{n+1}-\sigma_{h}^{n}})]{v}=0, & \forall v\in V_{h},\label{eq:fs2-1}\\
 & \inp[\sigma_{h}^{n+1}-\sigma_{h}^{n}]{\xi}=0, & \forall\xi\in\W_{h},\label{eq:fs3-1}\\
 & \inp[\K(z_{h}^{n+1}-z_{h}^{n})]{q}-\inp[p_{h}^{n+1}-p_{h}^{n}]{\dvr{q}}=0, & \forall q\in Z_{h},\label{eq:fs4-1}
\end{align}
Taking
$\tau=\sigma_{h}^{n+1}-\sigma_{h}^{n}$, $v=u_{h}^{n+1}-u_{h}^{n}$
and $\xi=\gamma_{h}^{n+1}-\gamma_{h}^{n}$ in (\ref{eq:fs1-1})--(\ref{eq:fs3-1}) and adding the
equations results in
\begin{equation}\label{eq:fs-st-2}
\|A^{\frac{1}{2}}(\sigma_{h}^{n+1}-\sigma_{h}^{n})\| \le \alpha \|A^{\frac{1}{2}}(p_{h}^{n+1}-p_{h}^{n})I\|.
\end{equation}
Taking test functions $q=z_{h}^{n+1}$ in  (\ref{eq:fs4-1}) and $w=p_{h}^{n+1}-p_{h}^{n}$
in  (\ref{eq:fs5}) and adding the equations gives
\begin{align*}
& c_{0}\inp[\frac{p_{h}^{n+1}-p_{h}^{n}}{\Dt}]{p_{h}^{n+1}-p_{h}^{n}}
  + \a\inp[A\a\frac{p_{h}^{n+1}-p_{h}^{n}}{\Dt}I]{(p_{h}^{n+1}-p_{h}^{n})I}
  +\inp[\K(z_{h}^{n+1}-z_{h}^{n})]{z_{h}^{n+1}} \\
  & =\a\inp[A\frac{\sigma_{h}^{n}-\sigma_{h}^{n-1}}{\Dt}]{(p_{h}^{n+1}-p_{h}^{n})I}
  \le\frac{1}{2\Delta t}\|A^{\frac{1}{2}}(\sigma_{h}^{n}-\sigma_{h}^{n-1})\|^{2}
  +\frac{\alpha^{2}}{2\Delta t} \|A^{\frac{1}{2}}(p_{h}^{n+1}-p_{h}^{n})I\|^2,
\end{align*}
which, combined with \eqref{eq:fs-st-2}, implies, for $n = 0,\ldots,N-1$,
\begin{align*}
&  \frac{c_{0}}{\Delta t}\|p_{h}^{n+1}-p_{h}^{n}\|^{2}
  +\frac{\a^{2}}{2\Delta t} \|A^{\frac{1}{2}}(p_{h}^{n+1}-p_{h}^{n})I\|^2
  +\frac{1}{2}\left(\|K^{-\frac12}(z_{h}^{n+1}-z_{h}^{n})\|^{2}
  + \|K^{-\frac12}z_{h}^{n+1}\|^{2} - \|K^{-\frac12}z^{n}_h\|^{2}\right) \\
& \qquad  \le \frac{\alpha^{2}}{2\Delta t} \|A^{\frac{1}{2}}(p_{h}^{n}-p_{h}^{n-1})I\|^2,
\end{align*}
where for $n=0$ we have set $p_h^{-1}:=p_h^0$.
Summing over $n$ from 0 to $k-1$ for any $k = 1,\ldots,N$ gives
\begin{equation}\label{eq:fs-st-3}
  \sum_{n=0}^{k-1}\frac{c_{0}}{\Delta t}\|p_{h}^{n+1}-p_{h}^{n}\|^{2}
  + \|z_{h}^{k}\|^2 \le C \|z_{h}^{0}\|^2, \quad  k = 1,\ldots,N.
\end{equation}
Next, similarly to the arguments in Theorem~\ref{thm:ds}, we obtain
\begin{equation}\label{eq:p-bound-fs}
  \|p_h^{n+1}\| \le C \|z_h^{n+1}\|, \quad  n = 0,\ldots,N-1,
\end{equation}
\begin{equation}\label{eq:sigma-bound-fs}
\|A^{1/2}\sigma_h^{n+1}\| \le C \|p_h^{n+1}\|, \quad  n = 0,\ldots,N-1.
  \end{equation}
and
\begin{equation}\label{eq:u-gamma-bound-fs}
  \|u_{h}^{n+1}\|+\|\gamma_{h}^{n+1}\|
  \le C \left(\|A^{\frac{1}{2}}\sigma_{h}^{n+1}\| + \|p_{h}^{n+1}\|\right), \quad n = 0,\ldots,N-1.
\end{equation}
The proof is completed by combining \eqref{eq:fs-st-3}--\eqref{eq:u-gamma-bound-fs}.
\end{proof}

\subsection{Domain decomposition for the split methods}

In this subsection, we present a non-overlapping domain decomposition
method for the drained split decoupled formulation discussed in
subsection \ref{subsec:Drained-Split}, with non-zero source terms.  The
domain decomposition algorithm for the fixed stress decoupled
formulation is similar; it can be obtained by modifying the order of
the coupling terms accordingly. We omit the details.

Following the notation used in Section~\ref{subsec:Monolithic-MFE-Domain}
for the monolithic domain decomposition method, the domain decomposition
method for the DS formulation with non-zero source terms reads as follows: for
$1\le i\le m$ and $n=0,\ldots,N-1$, find $(\sigma_{h,i}^{n+1},u_{h,i}^{n+1},\g_{h,i}^{n+1},\lambda_{h}^{u,n+1})\in\X_{h,i}\times V_{h,i}\times\W_{h,i}\times\Lambda_{h}^{u}$
and $(z_{h,i}^{n+1},p_{h,i}^{n+1},\lambda_{h}^{p,n+1})\in Z_{h,i}\times W_{h,i}\times\Lambda_{h}^p$
such that:

\begin{align*}
  & \inp[A\sigma_{h,i}^{n+1}]{\tau}_{\O_{i}}+\inp[u_{h,i}^{n+1}]{\dvr{\tau}}_{\O_{i}}
  +\inp[\gamma_{h,i}^{n+1}]{\tau}_{\O_{i}} \\
  & \qquad\quad =\inp[A\a p_{h,i}^{n}I]{\tau}_{\O_{i}}
  +\gnp[g_{u}^{n+1}]{\t\,n_i}_{\dO_i \cap\Gd^u}
  +\gnp[\lambda_{h}^{u,n+1}]{\t\,n_i}_{\Gamma_{i}}, &  & \forall\tau\in\X_{h,i},\\
 & \inp[\dvr{\sigma_{h,i}^{n+1}}]{v}_{\O_{i}}=-\inp[f^{n+1}]{v}_{\O_{i}}, &  & \forall v\in V_{h,i},\\
 & \inp[\sigma_{h,i}^{n+1}]{\xi}_{\O_{i}}=0, &  & \forall\xi\in\W_{h,i},\\
 & \sum_{i=1}^{m}\inp[\sigma_{h,i}^{n+1}\,n_{i}]{\mu^{u}}_{\G_{i}}=0, &  & \forall\mu^{u}\in\Lambda_{h}^{u},
\end{align*}
and 
\begin{align*}
 & \inp[\K z_{h,i}^{n+1}]{q}_{\O_{i}}-\inp[p_{h,i}^{n+1}]{\dvr{q}}_{\O_{i}}=-\gnp[g_{p}^{n+1}]{q\cdot n_i}_{\dO_i \cap\Gd^p}-\gnp[\lambda_{h}^{p,n+1}]{q\cdot n_i}_{\Gamma_{i}}, &  & \forall q\in Z_{h,i},\\
  & c_{0}\inp[\frac{p_{h,i}^{n+1}-p_{h,i}^{n}}{\Delta t}]{w}_{\O_{i}}
  +\a\inp[A\a\frac{p_{h,i}^{n+1}-p_{h,i}^{n}}{\Delta t}I]{w I}_{\O_{i}}+\inp[\dvr{z_{h,i}^{n+1}}]{w}_{\O_{i}}\\
 & \hspace{6em} = -\a\inp[\frac{A\left(\sigma_{h,i}^{n+1}-\sigma_{h,i}^{n}\right)}{\Delta t}]{wI}_{\O_{i}}+\inp[g^{n+1}]{w}_{\O_{i}}, &  & \forall w\in W_{h,i},\\
 & \sum_{i=1}^{m}\inp[z_{h,i}^{n+1}\cdot n_{i}]{\mu^{p}}_{\G_{i}}=0, &  & \forall\mu^{p}\in\Lambda_{h}^{p}.
\end{align*}
The above split domain decomposition formulation consists of separate
domain decomposition methods for mechanics and flow at each time
step. Such methods have been studied in detail for the flow
\cite{glowinski1988domain} and mechanics \cite{eldar_elastdd}
components. It is shown that in both cases the global
problem can be reduced to an interface problem with a symmetric and
positive definite operator with condition number $O(h^{-1})$. Therefore,
we employ the conjugate gradient (CG) method for the solution of the
interface problem in each case.

\section{Numerical results}\label{sec:numer}

In this section we report the results of several numerical tests
designed to verify and compare the convergence, stability, and
efficiency of the three domain decomposition methods developed in the
previous sections. The numerical schemes are implemented using deal.II
finite element package \cite{dealII90,BangerthHartmannKanschat2007}.

In all examples the computational domain is the unit square
$(0,1)^{2}$ and the mixed finite element spaces are $\X_{h}\times
V_{h}\times\W_{h} = \mathcal{BDM}_{1}^{2}\times Q_{0}^{2}\times
Q_{0}$ \cite{ArnAwaQiu} for elasticity and $Z_{h}\times
  W_{h}=\mathcal{BDM}_{1}\times Q_{0}$ \cite{brezzi1991mixed} for
  Darcy on quadrilateral meshes. Here $Q_k$ denotes polynomials
  of degree $k$ in each variable. For solving
the interface problem in the monolithic scheme we use non-restarted
unpreconditioned GMRES and in the sequential decoupled methods we use
unpreconditioned CG for the flow and mechanics parts separately. We
use a tolerance on the relative residual $\frac{r_{k}}{r_{o}}$ as
the stopping criteria for both iterative solvers. For Examples 1 and
2, the tolerance is taken to be $10^{-12}$. For Example 3, the
tolerance is taken to be $10^{-6}$ due to relatively smaller initial
residual $r_{0}$. For the monolithic method, Theorem \ref{thm:intr-op-bound}
implies that that the spectral ratio
$\frac{\lambda_{\max}}{\lambda_{\min}}  =
\mathcal{O}(h^{-1})$, where $\lambda_{\min}$ and $\lambda_{\max}$ are the smallest and
largest real eigenvalues of the interface operator, respectively.
Depending on the deviation of the operator
from a normal matrix \cite{kelley1995iterative,IpsenGMRES}, the
growth rate for the number of iterations required for GMRES to converge
could be bounded. In particular, if the interface operator is normal, then
the expected growth rate of the number of GMRES iterations is
$\mathcal{O}\left(\sqrt{\frac{\lambda_{max}}{\lambda_{min}}}\right)$
\cite{kelley1995iterative},
which in our case is $\mathcal{O}(h^{-0.5})$. On the other hand,
the interface operators in the decoupled mechanics and flow systems in
the DS and FS schemes are symmetric and positive definite 
\cite{glowinski1988domain,eldar_elastdd}. A well known result \cite{kelley1995iterative}
is that the number of CG iterations required for convergence
is $\mathcal{O}(\sqrt{\kappa})$, where $\kappa$ is the condition
number for the interface operator. Furthermore, it is shown in
\cite{DarcyDDCOndition,eldar_elastdd} that the condition numbers
$\kappa_{mech}$ and $\kappa_{flow}$ for the interface operators
corresponding to the mechanics and flow parts respectively are
$\mathcal{O}(h^{-1})$ as well and hence the expected growth rate for
the number of CG iterations is also $\mathcal{O}(h^{-0.5})$.

\subsection{Example 1: convergence and stability}

In this example we test the convergence and stability of the three
domain decomposition schemes. We consider
the analytical solution
\[
p=\exp(t)(\sin(\pi x)\cos(\pi y)+10),
\quad u=\exp(t)\begin{pmatrix}x^{3}y^{4}+x^{2}+\sin((1-x)(1-y))\cos(1-y)\\
(1-x)^{4}(1-y)^{3}+(1-y)^{2}+\cos(xy)\sin(x)
\end{pmatrix}.
\]
The physical and numerical parameters are given in
Table~\ref{tab:Physical-Parameters_ex1}.
Using this information, we derive the right hand side and boundary 
and initial conditions for the system  \eqref{biot-1}--\eqref{biot-bc-2}.
\begin{table}[h]
\begin{centering}
\caption{Example 1, physical and numerical parameters.\label{tab:Physical-Parameters_ex1}}
\begin{tabular}{c|c}
\hline 
Parameter & Value\tabularnewline
\hline
Permeability tensor $(K)$ & $I$\tabularnewline
Lame coefficient $(\mu)$ & $100.0$\tabularnewline
Lame coefficient $(\lambda)$ & $100.0$\tabularnewline
Mass storativity $(c_{0})$ & $1.0, 10^{-3}$\tabularnewline
Biot-Willis constant $(\alpha)$ & $1.0$\tabularnewline
Time step $(\Delta t)$ & $10^{-3},10^{-2},10^{-1}$\tabularnewline
Number of time steps & $100$\tabularnewline
\hline 
\end{tabular}
\par\end{centering}
\end{table}
The global mesh is divided into $\ensuremath{2\times2}$ square
subdomains. We run a sequence of refinements from $\ensuremath{h=1/4}$
to $h=1/64$. The initial grids in the bottom left and top right subdomains are
perturbed randomly, resulting in general quadrilateral elements.
The computed solution for the monolithic scheme with $h = 1/64$ and $\Delta t=10^{-3}$
on the final time step is given in Figure
\ref{fig:Example-1,-Computed_monolithic}. 

To study and compare the convergence and stability of the three
methods, we run tests with time steps $\Delta t=10^{-3},10^{-2}\text{
  and }10^{-1}$.  The results with $c_0 = 1$ are presented in Tables
\ref{tab:Example-1,-Convergence_delta_t=00003D10-3}--\ref{tab:Example-1,--Convergence_delta_t=00003D10-1}. We
report the average number of iterations over 100 time steps. The
numerical errors are relative to the corresponding norms of the exact
solution. We use standard Bochner space notation to denote the
space-time norms. Convergence results for the case with $c_{0} =
0.001$ and $\Delta t = 0.01$ are given in
Table~\ref{tab:Example-1,--c_0_small_Convergence_delta_t=00003D10-2}.

The main observation is that all three methods exhibit growth in the
number of interface iterations at the rate of
$\mathcal{O}(h^{-0.5})$. This is consistent with the theoretical
bounds on the spectrum of the interface operator, cf. the discussion
at the beginning of Section~\ref{sec:numer}. This behavior is robust
with respect to both $\Delta t$ and $c_0$. We further note that in both
split schemes, the Darcy interface solver requires fewer number of iterations
than the elasticity solver. We attribute this to the fact that the Darcy formulation
involves a contribution to the diagonal from the time derivative term, resulting
in a smaller condition number of the interface operator.

Another important
conclusion from the tables is that two split schemes are stable
uniformly in $\Delta t$ and $c_0$, in accordance with
Theorem~\ref{thm:ds} and Theorem~\ref{thm:fs}.

In terms of accuracy,
all three methods yield $\mathcal{O}(h)$ convergence for all variables
in their natural norms, which is optimal convergence for the
approximation of the Biot system with the chosen finite element
spaces, cf. \cite{Lee-Biot-five-field,msfmfe-Biot}. In some cases,
especially for larger $\Delta t$, we observe reduction in the
convergence rate for certain variables due to the effect of the time
discretization and/or splitting errors, most notably for the Darcy
velocity in the fixed stress scheme. The accuracy of the three methods
is comparable for smaller $\Delta t$.

In terms of efficiency, in most cases the total number of flow and elasticity CG iterations in the split schemes is comparable to the number of GMRES iterations in the monolithic scheme. However, the 
split schemes have a clear advantage, due to the more efficient CG interface solver compared to GMRES for the monolithic scheme, as well as the less
costly subdomain problems - single-physics solves versus the
coupled Biot solves in the monolithic scheme.

\begin{figure}[h]
\subfloat[Stress x]
{\includegraphics[width=0.33\columnwidth]{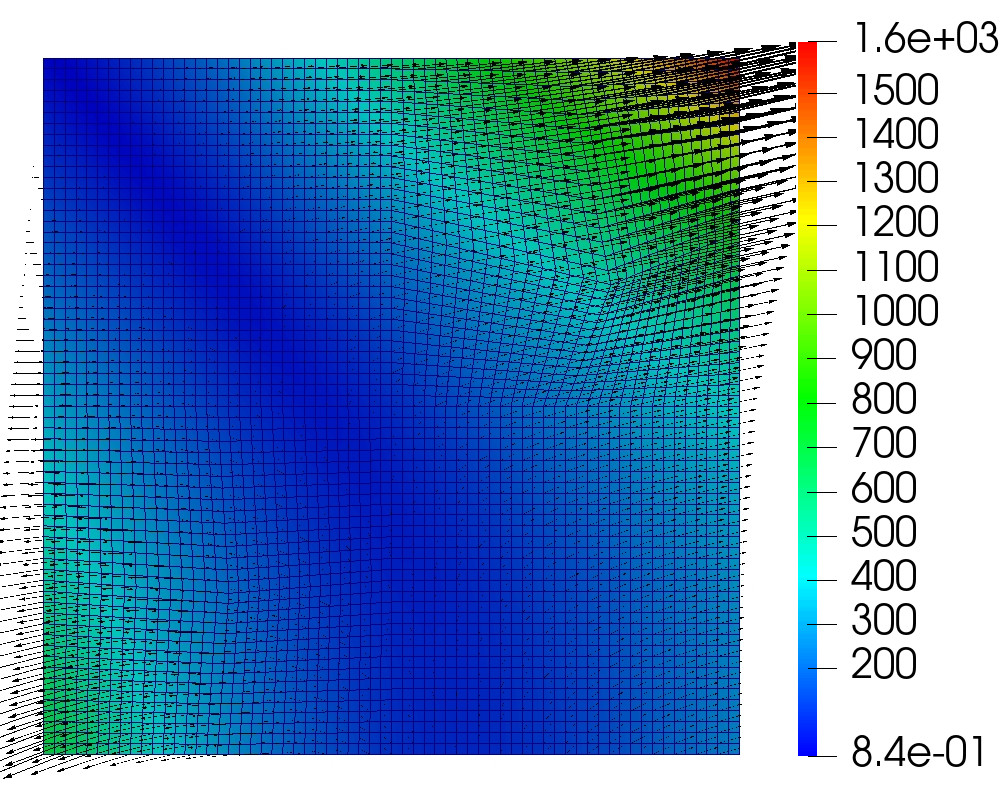}}
~\subfloat[Stress y]
{\includegraphics[width=0.33\columnwidth]{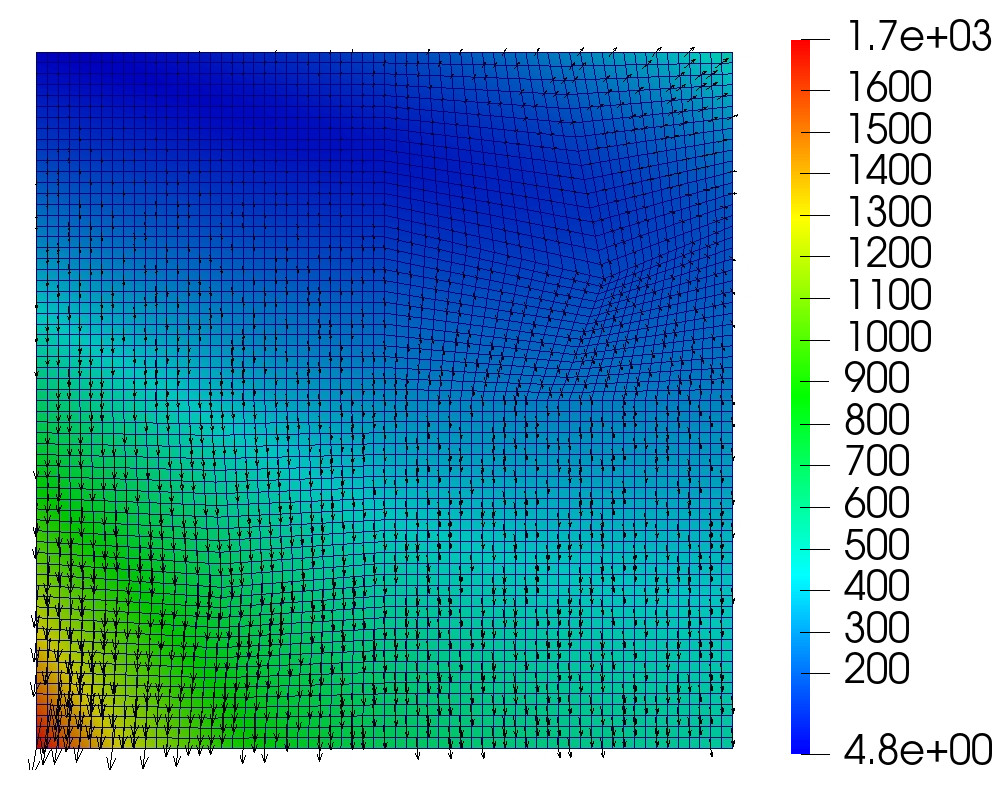}}
~\subfloat[Displacement]
{\includegraphics[width=0.33\columnwidth]{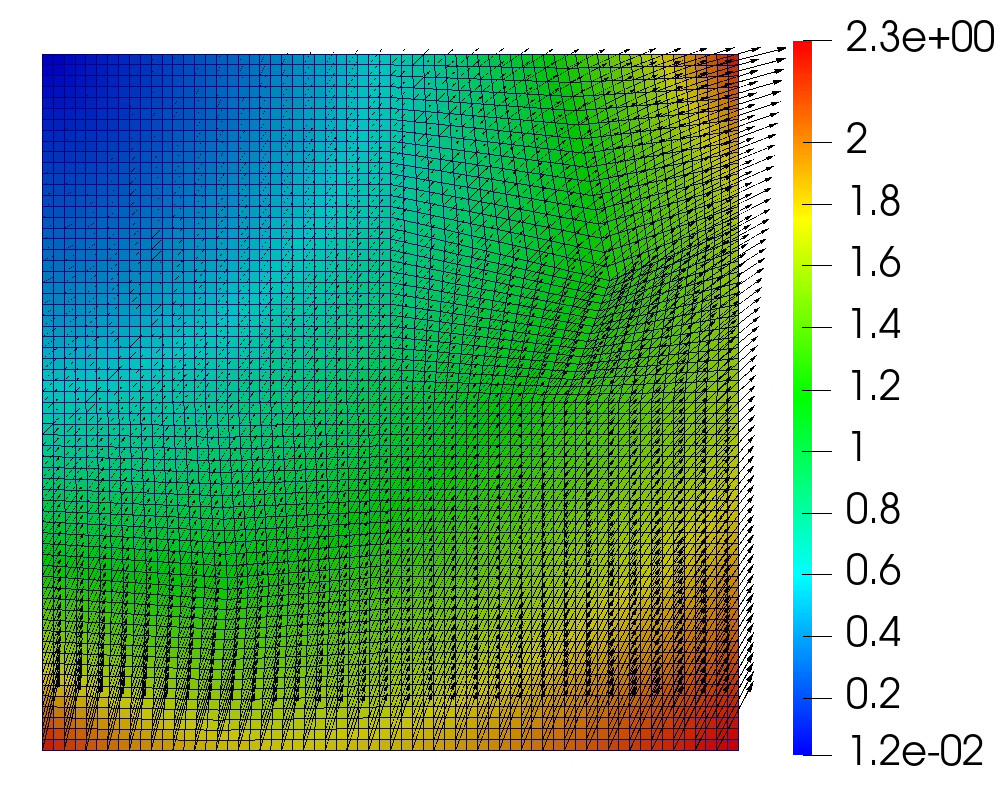}}~

\subfloat[Rotation]{\includegraphics[width=0.33\columnwidth]{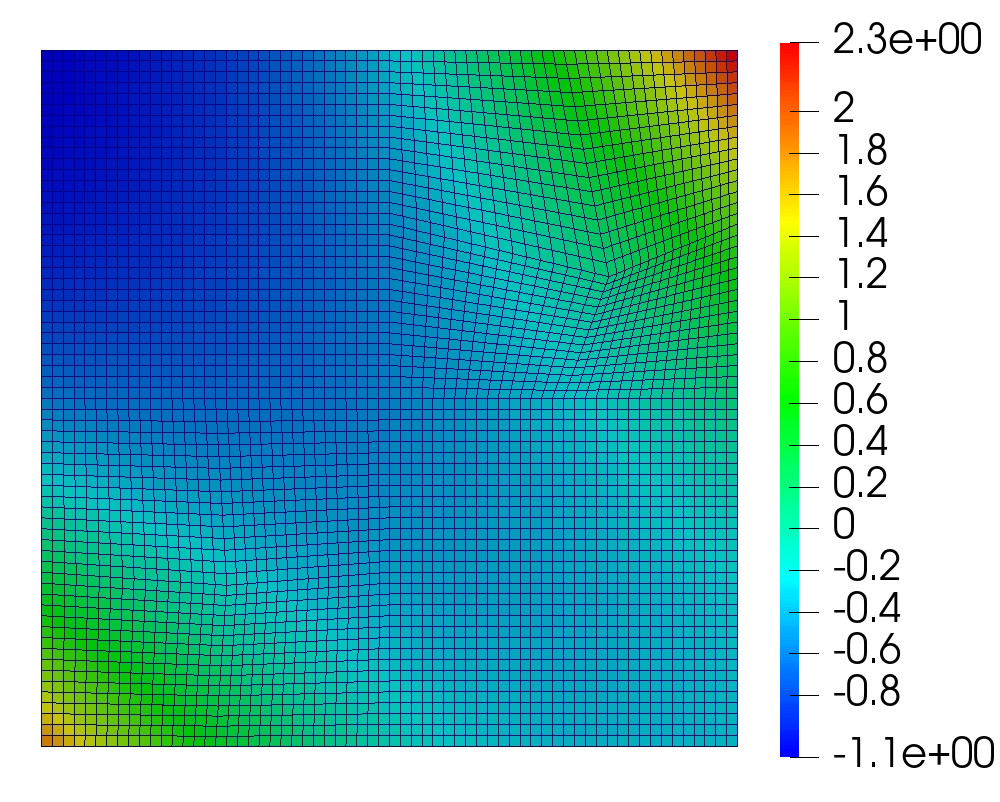}}
\subfloat[Velocity]
{\includegraphics[width=0.33\columnwidth]{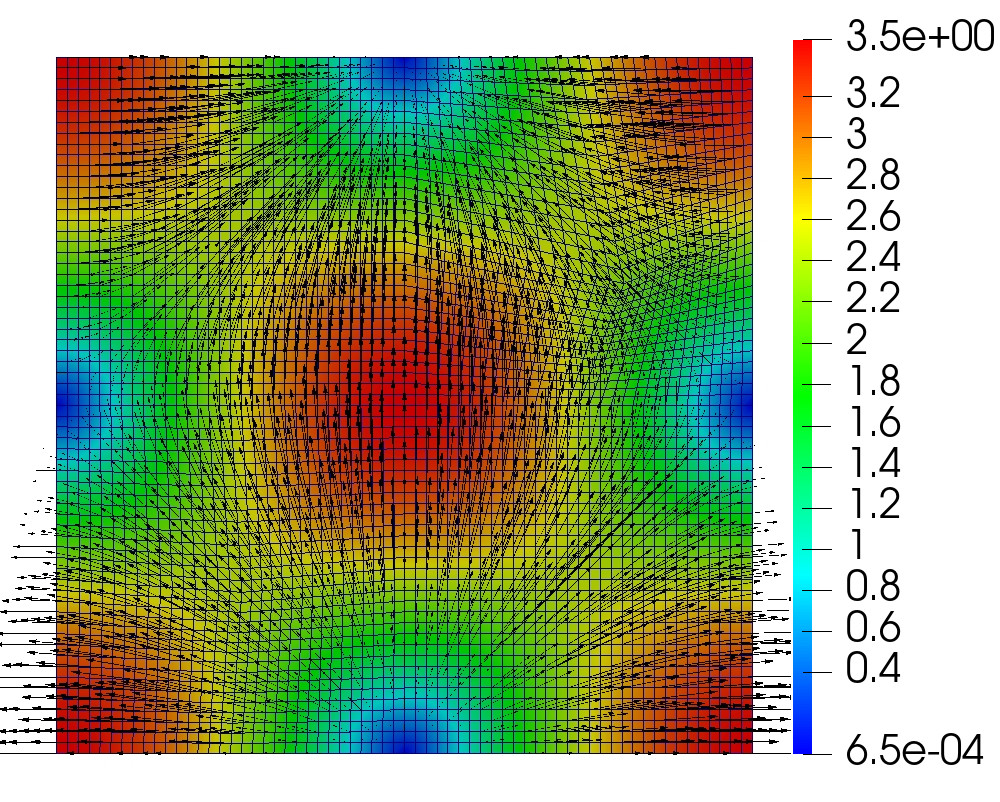}}
~\subfloat[Pressure]{\includegraphics[width=0.33\columnwidth]{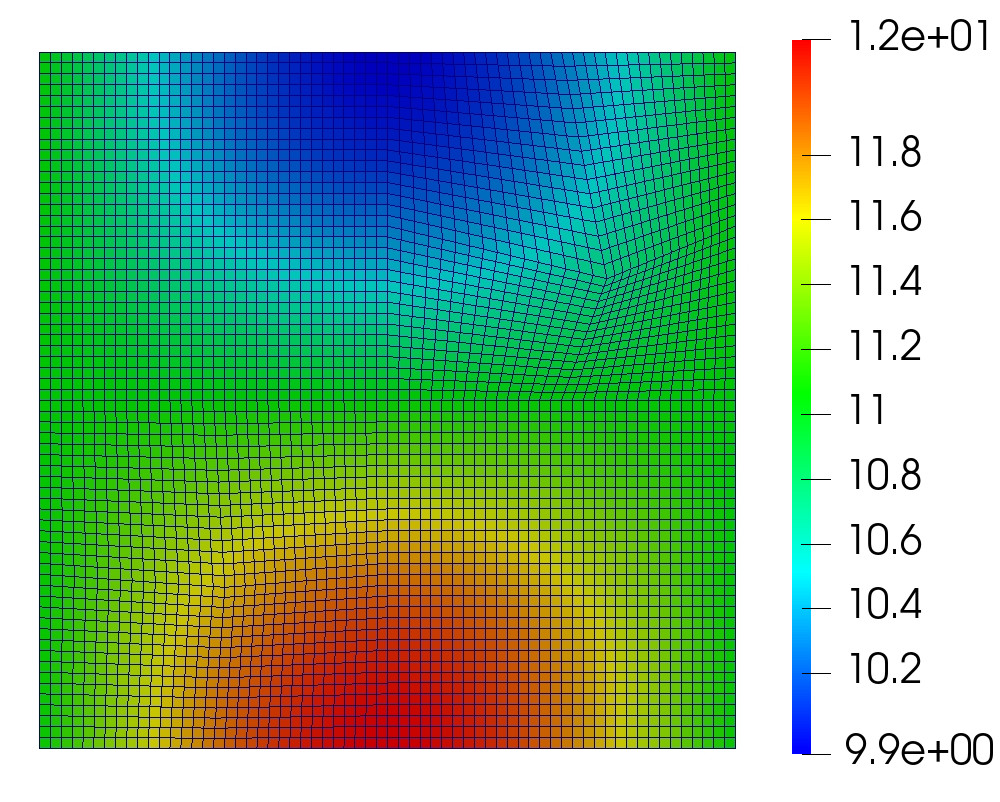}}

\caption{\label{fig:Example-1,-Computed_monolithic}Example
  1, computed solution at the final time step using the monolithic domain
  decomposition method with $h = 1/64$ and $\Delta t=10^{-3}$. }
\end{figure}

\renewcommand{\tabcolsep}{3.9pt}
\begin{table}[h]
  \caption{\label{tab:Example-1,-Convergence_delta_t=00003D10-3}
    Example 1, convergence for $\Delta t=10^{-3}$ and $c_0 = 1$.}
\centering{}%
\subfloat[Monolithic scheme]{
\begin{tabular}{|c|c|c|c|c|c|c|c|c|c|c|}
\hline 
$h$  & \multicolumn{2}{c|}{\#GMRES} & \multicolumn{2}{c|}{$\|z-z_{h}\|_{L^{\infty}(H_{\dvr})}$} & \multicolumn{2}{c|}{$\|p-p_{h}\|_{L^{\infty}(L^{2})}$} & \multicolumn{2}{c|}
    {$\|\sigma-\sigma_{h}\|_{L^{\infty}(H_{\dvr})}$} & \multicolumn{2}{c|}{$\|u-u_{h}\|_{L^{\infty}(L^{2})}$}\tabularnewline
\hline
$1/4$ & 24  & rate & 2.13e+00  & rate & 7.05e-02  & rate & 6.95e-01  & rate & 6.88e-01  & rate\tabularnewline
\cline{3-3} \cline{5-5} \cline{7-7} \cline{9-9} \cline{11-11} 
$1/8$ & 33  & -0.46  & 1.13e+00  & 0.92  & 3.56e-02  & 0.98  & 3.57e-01  & 0.96  & 3.48e-01  & 0.98\tabularnewline
$1/16$ & 44  & -0.42  & 4.84e-01  & 1.22  & 1.79e-02  & 1.00  & 1.79e-01  & 0.99  & 1.75e-01  & 1.00\tabularnewline
$1/32$ & 62  & -0.49  & 2.01e-01  & 1.27  & 8.94e-03  & 1.00  & 8.99e-02  & 1.00  & 8.74e-02  & 1.00\tabularnewline
$1/64$ & 87  & -0.49  & 9.15e-02  & 1.14  & 4.47e-03  & 1.00  & 4.50e-02  & 1.00  & 4.37e-02  & 1.00\tabularnewline
\hline 
\end{tabular}}

\subfloat[Drained split]{
\centering{}%
\begin{tabular}{|c|c|c|c|c|c|c|c|c|c|c|c|c|}
\hline 
$h$  & \multicolumn{2}{c|}{\#CGElast} & \multicolumn{2}{c|}{\#CGDarcy} & \multicolumn{2}{c|}{$\|z-z_{h}\|_{L^{\infty}(H_{\dvr})}$} & \multicolumn{2}{c|}{$\|p-p_{h}\|_{L^{\infty}(L^{2})}$} & \multicolumn{2}{c|}{$\|\sigma-\sigma_{h}\|_{L^{\infty}(H_{\dvr})}$} & \multicolumn{2}{c|}{$\|u-u_{h}\|_{L^{\infty}(L^{2})}$}\tabularnewline
\hline 
$1/4$ & 19  & rate & 10  & rate & 2.00e+00  & rate  & 7.07e-02  & rate & 7.01e-01  & rate & 6.88e-01  & rate\tabularnewline
\cline{3-3} \cline{5-5} \cline{7-7} \cline{9-9} \cline{11-11} \cline{13-13} 
$1/8$ & 23  & -0.28  & 10  & 0.00  & 1.11e+00  & 0.85  & 3.57e-02  & 0.99  & 3.59e-01  & 0.96  & 3.48e-01  & 0.98\tabularnewline
$1/16$ & 34  & -0.56  & 11  & -0.14  & 4.89e-01  & 1.18  & 1.79e-02  & 1.00  & 1.81e-01  & 0.99  & 1.75e-01  & 1.00\tabularnewline
$1/32$ & 47  & -0.47  & 15  & -0.45  & 2.06e-01  & 1.25  & 8.94e-03  & 1.00  & 9.06e-02  & 1.00  & 8.74e-02  & 1.00\tabularnewline
$1/64$ & 65  & -0.47  & 20  & -0.42  & 9.29e-02  & 1.15  & 4.47e-03  & 1.00  & 4.53e-02  & 1.00  & 4.37e-02  & 1.00\tabularnewline
\hline 
\end{tabular}}

\subfloat[Fixed stress]{
\centering{}%
\begin{tabular}{|c|c|c|c|c|c|c|c|c|c|c|c|c|}
\hline 
$h$  & \multicolumn{2}{c|}{\#CGElast} & \multicolumn{2}{c|}{\#CGDarcy} & \multicolumn{2}{c|}{$\|z-z_{h}\|_{L^{\infty}(H_{\dvr})}$} & \multicolumn{2}{c|}{$\|p-p_{h}\|_{L^{\infty}(L^{2})}$} & \multicolumn{2}{c|}{$\|\sigma-\sigma_{h}\|_{L^{\infty}(H_{\dvr})}$} & \multicolumn{2}{c|}{$\|u-u_{h}\|_{L^{\infty}(L^{2})}$}\tabularnewline
\hline 
$1/4$ & 19  & rate & 10  & rate & 1.93e+00  & rate & 7.06e-02  & rate & 7.01e-01  & rate & 6.88e-01  & rate\tabularnewline
\cline{3-3} \cline{5-5} \cline{7-7} \cline{9-9} \cline{11-11} \cline{13-13} 
$1/8$ & 23  & -0.28  & 10  & 0.00  & 1.05e+00  & 0.88  & 3.56e-02  & 0.99  & 3.59e-01  & 0.96  & 3.48e-01  & 0.98\tabularnewline
$1/16$ & 34  & -0.56  & 11  & -0.14  & 4.46e-01  & 1.23  & 1.79e-02  & 1.00  & 1.81e-01  & 0.99  & 1.75e-01  & 1.00\tabularnewline
$1/32$ & 47  & -0.47  & 15  & -0.45  & 2.63e-01  & 0.76  & 8.95e-03  & 1.00  & 9.06e-02  & 1.00  & 8.74e-02  & 1.00\tabularnewline
$1/64$ & 65  & -0.47  & 20  & -0.42  & 2.17e-01  & 0.28  & 4.49e-03  & 0.99  & 4.53e-02  & 1.00  & 4.37e-02  & 1.00\tabularnewline
\hline 
\end{tabular}}
\end{table}

\begin{table}[h]
\caption{\label{tab:Example-1,--Convergence_delta_t=00003D10-2}Example 1,
convergence for $\Delta t=10^{-2}$ and $c_0 = 1$.}
\centering{}%
\subfloat[Monolithic scheme]{
\begin{tabular}{|c|c|c|c|c|c|c|c|c|c|c|}
\hline 
$h$  & \multicolumn{2}{c|}{\#GMRES} & \multicolumn{2}{c|}{$\|z-z_{h}\|_{L^{\infty}(H_{\dvr})}$} & \multicolumn{2}{c|}{$\|p-p_{h}\|_{L^{\infty}(L^{2})}$} & \multicolumn{2}{c|}{$\|\sigma-\sigma_{h}\|_{L^{\infty}(H_{\dvr})}$} & \multicolumn{2}{c|}{$\|u-u_{h}\|_{L^{\infty}(L^{2})}$}\tabularnewline
\hline 
$1/4$ & 18  & rate & 1.58e+00  & rate & 6.98e-02  & rate & 6.97e-01  & rate & 6.88e-01  & rate\tabularnewline
\cline{3-3} \cline{5-5} \cline{7-7} \cline{9-9} \cline{11-11} 
$1/8$ & 23  & -0.35  & 7.47e-01  & 1.08  & 3.55e-02  & 0.97  & 3.58e-01  & 0.96  & 3.48e-01  & 0.98\tabularnewline
$1/16$ & 32  & -0.48  & 3.58e-01  & 1.06  & 1.79e-02  & 0.99  & 1.80e-01  & 0.99  & 1.75e-01  & 0.99\tabularnewline
$1/32$ & 44  & -0.46  & 1.77e-01  & 1.02  & 8.97e-03  & 0.99  & 9.02e-02  & 1.00  & 8.88e-02  & 0.98\tabularnewline
$1/64$ & 63  & -0.52  & 8.98e-02  & 0.98  & 4.54e-03  & 0.98  & 4.53e-02  & 1.00  & 4.66e-02  & 0.93\tabularnewline
\hline 
\end{tabular}}

\subfloat[Drained split]{
\begin{tabular}{|c|c|c|c|c|c|c|c|c|c|c|c|c|}
\hline 
$h$  & \multicolumn{2}{c|}{\#CGElast} & \multicolumn{2}{c|}{\#CGDarcy} & \multicolumn{2}{c|}{$\|z-z_{h}\|_{L^{\infty}(H_{\dvr})}$} & \multicolumn{2}{c|}{$\|p-p_{h}\|_{L^{\infty}(L^{2})}$} & \multicolumn{2}{c|}{$\|\sigma-\sigma_{h}\|_{L^{\infty}(H_{\dvr})}$} & \multicolumn{2}{c|}{$\|u-u_{h}\|_{L^{\infty}(L^{2})}$}\tabularnewline
\hline 
$1/4$ & 19  & rate & 10  & rate  & 1.57e+00  & rate & 6.98e-02  & rate & 7.01e-01  & rate & 6.88e-01  & rate\tabularnewline
\cline{3-3} \cline{5-5} \cline{7-7} \cline{9-9} \cline{11-11} \cline{13-13} 
$1/8$ & 23  & -0.28  & 12  & -0.26  & 7.46e-01  & 1.07  & 3.55e-02  & 0.97  & 3.59e-01  & 0.96  & 3.48e-01  & 0.98\tabularnewline
$1/16$ & 34  & -0.56  & 16  & -0.42  & 3.58e-01  & 1.06  & 1.79e-02  & 0.99  & 1.81e-01  & 0.99  & 1.75e-01  & 1.00\tabularnewline
$1/32$ & 47  & -0.47  & 23  & -0.52  & 1.77e-01  & 1.02  & 8.97e-03  & 0.99  & 9.06e-02  & 1.00  & 8.74e-02  & 1.00\tabularnewline
$1/64$ & 65  & -0.47  & 32  & -0.48  & 8.96e-02  & 0.98  & 4.53e-03  & 0.98  & 4.53e-02  & 1.00  & 4.37e-02  & 1.00\tabularnewline
\hline 
\end{tabular}}

\subfloat[Fixed stress]{
\begin{tabular}{|c|c|c|c|c|c|c|c|c|c|c|c|c|}
\hline 
$h$  & \multicolumn{2}{c|}{\#CGElast} & \multicolumn{2}{c|}{\#CGDarcy} & \multicolumn{2}{c|}{$\|z-z_{h}\|_{L^{\infty}(H_{\dvr})}$} & \multicolumn{2}{c|}{$\|p-p_{h}\|_{L^{\infty}(L^{2})}$} & \multicolumn{2}{c|}{$\|\sigma-\sigma_{h}\|_{L^{\infty}(H_{\dvr})}$} & \multicolumn{2}{c|}{$\|u-u_{h}\|_{L^{\infty}(L^{2})}$}\tabularnewline
\hline 
$1/4$ & 19  & rate & 10  & rate & 1.48e+00  & rate & 6.97e-02  & rate & 7.01e-01  & rate & 6.88e-01  & rate\tabularnewline
\cline{3-3} \cline{5-5} \cline{7-7} \cline{9-9} \cline{11-11} \cline{13-13} 
$1/8$ & 23  & -0.28  & 12  & -0.26  & 7.64e-01  & 0.96  & 3.56e-02  & 0.97  & 3.59e-01  & 0.96  & 3.48e-01  & 0.98\tabularnewline
$1/16$ & 34  & -0.56  & 16  & -0.42  & 4.88e-01  & 0.65  & 1.81e-02  & 0.98  & 1.81e-01  & 0.99  & 1.75e-01  & 1.00\tabularnewline
$1/32$ & 47  & -0.47  & 23  & -0.52  & 3.80e-01  & 0.36  & 9.37e-03  & 0.95  & 9.06e-02  & 1.00  & 8.74e-02  & 1.00\tabularnewline
$1/64$ & 65  & -0.47  & 32  & -0.48  & 3.44e-01  & 0.14  & 5.26e-03  & 0.83  & 4.53e-02  & 1.00  & 4.37e-02  & 1.00\tabularnewline
\hline 
\end{tabular}}
\end{table}

\begin{table}[h]
\caption{\label{tab:Example-1,--Convergence_delta_t=00003D10-1}Example 1,
convergence for $\Delta t=10^{-1}$ and $c_0 = 1$.}
\centering{}%
\subfloat[Monolithic scheme]{
\begin{tabular}{|c|c|c|c|c|c|c|c|c|c|c|}
\hline 
$h$  & \multicolumn{2}{c|}{\#GMRES} & \multicolumn{2}{c|}{$\|z-z_{h}\|_{L^{\infty}(H_{\dvr})}$} & \multicolumn{2}{c|}{$\|p-p_{h}\|_{L^{\infty}(L^{2})}$} & \multicolumn{2}{c|}{$\|\sigma-\sigma_{h}\|_{L^{\infty}(H_{\dvr})}$} & \multicolumn{2}{c|}{$\|u-u_{h}\|_{L^{\infty}(L^{2})}$}\tabularnewline
\hline 
$1/4$ & 40  & rate & 1.38e+00  & rate & 6.99e-02  & rate & 7.04e-01  & rate & 7.17e-01  & rate\tabularnewline
\cline{3-3} \cline{5-5} \cline{7-7} \cline{9-9} \cline{11-11} 
$1/8$ & 59  & -0.56  & 7.20e-01  & 0.94  & 3.63e-02  & 0.94  & 3.65e-01  & 0.95  & 4.26e-01  & 0.75\tabularnewline
$1/16$ & 88  & -0.58  & 3.97e-01  & 0.86  & 1.94e-02  & 0.90  & 1.92e-01  & 0.93  & 3.09e-01  & 0.46\tabularnewline
$1/32$ & 128  & -0.54  & 2.57e-01  & 0.63  & 1.17e-02  & 0.72  & 1.09e-01  & 0.81  & 2.72e-01  & 0.19\tabularnewline
$1/64$ & 180  & -0.49  & 2.08e-01  & 0.31  & 8.84e-03  & 0.41  & 7.40e-02  & 0.56  & 2.62e-01  & 0.06\tabularnewline
\hline 
\end{tabular}}

\subfloat[Drained split]{
\centering{}%
\begin{tabular}{|c|c|c|c|c|c|c|c|c|c|c|c|c|}
\hline 
$h$  & \multicolumn{2}{c|}{\#CGElast} & \multicolumn{2}{c|}{\#CGDarcy} & \multicolumn{2}{c|}{$\|z-z_{h}\|_{L^{\infty}(H_{\dvr})}$} & \multicolumn{2}{c|}{$\|p-p_{h}\|_{L^{\infty}(L^{2})}$} & \multicolumn{2}{c|}{$\|\sigma-\sigma_{h}\|_{L^{\infty}(H_{\dvr})}$} & \multicolumn{2}{c|}{$\|u-u_{h}\|_{L^{\infty}(L^{2})}$}\tabularnewline
\hline 
$1/4$ & 19  & rate  & 11  & rate & 1.38e+00  & rate & 6.99e-02  & rate & 7.01e-01  & rate & 6.88e-01  & rate\tabularnewline
\cline{3-3} \cline{5-5} \cline{7-7} \cline{9-9} \cline{11-11} \cline{13-13} 
$1/8$ & 23  & -0.28  & 14  & -0.35  & 7.17e-01  & 0.94  & 3.62e-02  & 0.95  & 3.59e-01  & 0.96  & 3.48e-01  & 0.98\tabularnewline
$1/16$ & 34  & -0.56  & 20  & -0.51  & 3.92e-01  & 0.87  & 1.92e-02  & 0.91  & 1.81e-01  & 0.99  & 1.75e-01  & 1.00\tabularnewline
$1/32$ & 47  & -0.47  & 28  & -0.49  & 2.50e-01  & 0.65  & 1.15e-02  & 0.75  & 9.07e-02  & 1.00  & 8.74e-02  & 1.00\tabularnewline
$1/64$ & 65  & -0.47  & 38  & -0.44  & 1.99e-01  & 0.33  & 8.48e-03  & 0.44  & 4.56e-02  & 0.99  & 4.37e-02  & 1.00\tabularnewline
\hline 
\end{tabular}}

\subfloat[Fixed stress]{
\centering{}%
\begin{tabular}{|c|c|c|c|c|c|c|c|c|c|c|c|c|}
\hline 
$h$  & \multicolumn{2}{c|}{\#CGElast} & \multicolumn{2}{c|}{\#CGDarcy} & \multicolumn{2}{c|}{$\|z-z_{h}\|_{L^{\infty}(H_{\dvr})}$} & \multicolumn{2}{c|}{$\|p-p_{h}\|_{L^{\infty}(L^{2})}$} & \multicolumn{2}{c|}{$\|\sigma-\sigma_{h}\|_{L^{\infty}(H_{\dvr})}$} & \multicolumn{2}{c|}{$\|u-u_{h}\|_{L^{\infty}(L^{2})}$}\tabularnewline
\hline 
$1/4$ & 19  & rate & 11  & rate & 1.42e+00  & rate & 7.00e-02  & rate & 7.00e-01  & rate & 6.88e-01  & rate\tabularnewline
\cline{3-3} \cline{5-5} \cline{7-7} \cline{9-9} \cline{11-11} \cline{13-13} 
$1/8$ & 23  & -0.28  & 14  & -0.35  & 8.38e-01  & 0.76  & 3.63e-02  & 0.95  & 3.59e-01  & 0.96  & 3.48e-01  & 0.98\tabularnewline
$1/16$ & 34  & -0.56  & 20  & -0.51  & 5.83e-01  & 0.52  & 1.93e-02  & 0.91  & 1.81e-01  & 0.99  & 1.75e-01  & 1.00\tabularnewline
$1/32$ & 47  & -0.47  & 28  & -0.49  & 4.87e-01  & 0.26  & 1.15e-02  & 0.74  & 9.06e-02  & 1.00  & 8.74e-02  & 1.00\tabularnewline
$1/64$ & 65  & -0.47  & 38  & -0.44  & 4.56e-01  & 0.09  & 8.53e-03  & 0.44  & 4.53e-02  & 1.00  & 4.37e-02  & 1.00\tabularnewline
\hline 
\end{tabular}}
\end{table}

\begin{table}[h]
  \caption{\label{tab:Example-1,--c_0_small_Convergence_delta_t=00003D10-2}
    Example 1, convergence for $\Delta t=10^{-2}$ and $c_{0}=10^{-3}$.}
	\centering{}%
	\subfloat[Monolithic scheme]{
	\begin{tabular}{|c|c|c|c|c|c|c|c|c|c|c|}
		\hline 
		$h$  & \multicolumn{2}{c|}{\#GMRES} & \multicolumn{2}{c|}{$\|z-z_{h}\|_{L^{\infty}(H_{\dvr})}$} & \multicolumn{2}{c|}{$\|p-p_{h}\|_{L^{\infty}(L^{2})}$} & \multicolumn{2}{c|}{$\|\sigma-\sigma_{h}\|_{L^{\infty}(H_{\dvr})}$} & \multicolumn{2}{c|}{$\|u-u_{h}\|_{L^{\infty}(L^{2})}$}\tabularnewline
		\hline 
		$h/4$ & 21  & rate & 1.86e+00  & rate & 7.12e-02  & rate & 6.97e-01  & rate & 6.88e-01  & rate\tabularnewline
		\cline{3-3} \cline{5-5} \cline{7-7} \cline{9-9} \cline{11-11} 
		$h/8$ & 28  & -0.42  & 7.87e-01  & 1.24  & 3.57e-02  & 1.00  & 3.58e-01  & 0.96  & 3.48e-01  & 0.98\tabularnewline
		$h/16$ & 38  & -0.44  & 3.63e-01  & 1.12  & 1.79e-02  & 1.00  & 1.80e-01  & 0.99  & 1.75e-01  & 0.99\tabularnewline
		$h/32$ & 53  & -0.48  & 1.77e-01  & 1.04  & 8.94e-03  & 1.00  & 9.02e-02  & 1.00  & 8.88e-02  & 0.98\tabularnewline
		$h/64$ & 73  & -0.46  & 8.78e-02  & 1.01  & 4.47e-03  & 1.00  & 4.53e-02  & 1.00  & 4.66e-02  & 0.93\tabularnewline
		\hline 
	\end{tabular}}
	
	\subfloat[Drained split]{
		\begin{tabular}{|c|c|c|c|c|c|c|c|c|c|c|c|c|}
			\hline 
			$h$  & \multicolumn{2}{c|}{\#CGElast} &
                        \multicolumn{2}{c|}{\#CGDarcy} & \multicolumn{2}{c|}{$\|z-z_{h}\|_{L^{\infty}(H_{\dvr})}$} & \multicolumn{2}{c|}{$\|p-p_{h}\|_{L^{\infty}(L^{2})}$} & \multicolumn{2}{c|}{$\|\sigma-\sigma_{h}\|_{L^{\infty}(H_{\dvr})}$} & \multicolumn{2}{c|}{$\|u-u_{h}\|_{L^{\infty}(L^{2})}$}\tabularnewline
			\hline 
			$1/4$ & 19 & rate & 11 & rate & 1.90e+00 & rate & 7.16e-02 & rate & 7.01e-01 & rate & 6.88e-01 & rate\tabularnewline
			\cline{3-3} \cline{5-5} \cline{7-7} \cline{9-9} \cline{11-11} \cline{13-13} 
			$1/8$ & 23 & -0.28 & 15 & -0.45 & 7.91e-01 & 1.26 & 3.58e-02 & 1.00 & 3.59e-01 & 0.96 & 3.48e-01 & 0.98\tabularnewline
			$1/16$ & 34 & -0.56 & 20 & -0.42 & 3.64e-01 & 1.12 & 1.79e-02 & 1.00 & 1.81e-01 & 0.99 & 1.75e-01 & 1.00\tabularnewline
			$1/32$ & 47 & -0.47 & 28 & -0.49 & 1.78e-01 & 1.03 & 8.98e-03 & 1.00 & 9.06e-02 & 1.00 & 8.74e-02 & 1.00\tabularnewline
			$1/64$ & 65 & -0.47 & 41 & -0.55 & 9.01e-02 & 0.98 & 4.55e-03 & 0.98 & 4.53e-02 & 1.00 & 4.37e-02 & 1.00\tabularnewline
			\hline 
	\end{tabular}}
	
	\subfloat[Fixed stress]{
		\begin{tabular}{|c|c|c|c|c|c|c|c|c|c|c|c|c|}
			\hline 
			$h$  & \multicolumn{2}{c|}{\#CGElast} &
                        \multicolumn{2}{c|}{\#CGDarcy} & \multicolumn{2}{c|}{$\|z-z_{h}\|_{L^{\infty}(H_{\dvr})}$} & \multicolumn{2}{c|}{$\|p-p_{h}\|_{L^{\infty}(L^{2})}$} & \multicolumn{2}{c|}{$\|\sigma-\sigma_{h}\|_{L^{\infty}(H_{\dvr})}$} & \multicolumn{2}{c|}{$\|u-u_{h}\|_{L^{\infty}(L^{2})}$}\tabularnewline
			\hline 
			$1/4$ & 19 & rate & 11 & rate & 1.88e+00 & rate & 7.16e-02 & rate & 7.01e-01 & rate & 6.88e-01 & rate\tabularnewline
			\cline{3-3} \cline{5-5} \cline{7-7} \cline{9-9} \cline{11-11} \cline{13-13}
			$1/8$ & 23 & -0.28 & 15 & -0.45 & 8.84e-01 & 1.09 & 3.72e-02 & 0.94 & 3.59e-01 & 0.96 & 3.48e-01 & 0.98\tabularnewline
			$1/16$ & 34 & -0.56 & 20 & -0.42 & 6.43e-01 & 0.46 & 2.08e-02 & 0.84 & 1.81e-01 & 0.99 & 1.75e-01 & 1.00\tabularnewline
			$1/32$ & 47 & -0.47 & 28 & -0.49 & 5.60e-01 & 0.20 & 1.36e-02 & 0.61 & 9.06e-02 & 1.00 & 8.74e-02 & 1.00\tabularnewline
			$1/64$ & 65 & -0.47 & 41 & -0.55 & 5.36e-01 & 0.06 & 1.10e-02 & 0.31 & 4.53e-02 & 1.00 & 4.37e-02 & 1.00\tabularnewline
			\hline 
	\end{tabular}}
\end{table}


\subsection{Example 2: dependence on number of subdomains}

The objective of this example is to study how the number of GMRES and
CG iterations required for the different schemes depend on the number (and diameter)
of subdomains used in the domain decomposition. For this example, we
use the same test case as in Example 1. We solve the system using 4
$(2\times2)$, 16 $(4\times4)$, and 64 $(8\times8)$ square subdomains
of identical size. The physical parameters are as in Example 1, with
$c_0 = 1$, $\Delta t=10^{-3}$, and $T=100\times\Delta t$. The average
number and growth rate of iterations in the three methods are reported
in Tables
\ref{table-2-1}--\ref{table-2-3},
where $A$ denotes the subdomain diameter. We note that the number of iterations
for the drained split and fixed stress schemes are identical, so we give
one table for both methods. For a fixed $A$, the growth
rate with respect to $h$ is averaged over all mesh refinements.  For a
fixed mesh size $h$, the growth rate with respect to $A$ is averaged
over the different domain decompositions. For all three methods, we
observe that for a fixed number of subdomains, the growth rate in the
number of iterations with respect to mesh refinement is approximately
$\mathscr{\mathcal{O}}(h^{-0.5})$, being slightly better for the Darcy
solver in the split schemes. As this is the same as the growth rate in
Example 1, the conclusion from Example 1 that the growth rate is
consistent with the theory extends to domain decompositions with
varying number of subdomains, see also the discussion at the beginning
of Section~\ref{sec:numer}. We further observe that for a fixed mesh
size, the growth rate in number of iterations with respect to
subdomain diameter $A$ is approximately
$\mathscr{\mathcal{O}}(A^{-0.5})$, again being somewhat better for the
Darcy solves.  This is consistent with theoretical results bounding
the spectral ratio of the unpreconditioned interface operator as
$\mathscr{\mathcal{O}}\left(\left(hA\right)^{-1}\right)$
\cite{Toselli-Widlund}. The dependence on $A$ can be eliminated with
the use of a coarse solve preconditioner
\cite{DarcyDDCOndition,Toselli-Widlund}.

\begin{table}[h]
  \centering{}
  \caption{Example 2, number of GMRES iterations in the monolithic scheme\label{table-2-1}}
\begin{tabular}{c|c|c|c|c}
$h$ & $2\times2$ & $4\times4$ & $8\times8$ & Rate\tabularnewline
\hline 
$1/8$ & 33 & 53 & 76 & $\mathcal{O}(A^{-0.60})$\tabularnewline
$1/16$ & 45 & 68 & 97 & $\mathcal{O}(A^{-0.55})$\tabularnewline
$1/32$ & 63 & 93 & 126 & $\mathcal{O}(A^{-0.50})$\tabularnewline
$1/64$ & 88 & 125 & 164 & $\mathcal{O}(A^{-0.45})$\tabularnewline
\hline 
Rate & $\mathscr{\mathcal{O}}(h^{-0.47})$ & $\mathscr{\mathcal{O}}(h^{-0.41})$ & $\mathscr{\mathcal{O}}(h^{-0.36})$ & \tabularnewline
\end{tabular}
\end{table}

\begin{table}[h]\label{table-2-2}
\begin{singlespace}
\begin{centering}
\caption{Example 2, number of CG elasticity iterations in the drained split and fixed stress schemes}
\begin{tabular}{c|c|c|c|c}
$h$ & $2\times2$ & $4\times4$ & $8\times8$ & Rate\tabularnewline
\hline 
$1/8$ & 23 & 40 & 60 & $\mathcal{O}(A^{-0.69})$\tabularnewline
$1/16$ & 34 & 51 & 73 & $\mathcal{O}(A^{-0.55})$\tabularnewline
$1/32$ & 47 & 68 & 95 & $\mathcal{O}(A^{-0.51})$\tabularnewline
$1/64$ & 65 & 95 & 124 & $\mathcal{O}(A^{-0.46})$\tabularnewline
\hline 
Rate & $\mathcal{O}(h^{-0.50})$ & $\mathcal{O}(h^{-0.42})$ & $\mathcal{O}(h^{-0.35})$ & \tabularnewline
\end{tabular}
\par\end{centering}
\end{singlespace}

\end{table}

\begin{table}[h]
\begin{centering}
  \caption{Example 2, number of CG Darcy iterations in the
    drained split and fixed stress schemes\label{table-2-3}}
\begin{tabular}{c|c|c|c|c}
$h$ & $2\times2$ & $4\times4$ & $8\times8$ & Rate\tabularnewline
\hline 
$1/8$ & 10 & 11 & 14 & $\mathcal{O}(A^{-0.24})$\tabularnewline
$1/16$ & 11 & 12 & 14 & $\mathcal{O}(A^{-0.17})$\tabularnewline
$1/32$ & 15 & 16 & 18 & $\mathcal{O}(A^{-0.13})$\tabularnewline
$1/64$ & 20 & 23 & 24 & $\mathcal{O}(A^{-0.13})$\tabularnewline
\hline 
Rate & $\mathcal{O}(h^{-0.34})$ & $\mathcal{O}(h^{-0.36})$ & $\mathcal{O}(h^{-0.25})$ & \tabularnewline
\end{tabular}
\par\end{centering}
%
\end{table}

\subsection{Example 3: heterogeneous benchmark}

This example illustrates the performance of the methods for highly
heterogeneous media.  We use porosity and permeability fields from the
Society of Petroleum Engineers 10th Comparative Solution Project
(SPE10)\footnote{https://www.spe.org/web/csp/datasets/set02.htm}.  The
computational domain is $\Omega=(0,1)^{2}$, which is partitioned into
a $128\times128$ square grid. We decompose the domain into $4\times4$
square subdomains.  From the porosity field data, the Young's modulus
is obtained using the relation
$E=10^{2}\left(1-\frac{\phi}{c}\right)^{2.1},$ where $c=0.5$, refers
to the porosity at which the Young's modulus vanishes, see
\cite{kovavcik1999correlation} for details. The porosity, Young's
modulus and permeability fields are given in Figure
\ref{fig:Input-fields-for-example3}. The parameters and boundary
conditions are given in Table \ref{tab:Physical-Parameters_ex3}.  The
source terms are taken to be zero. These conditions describe flow from
left to right, driven by a pressure gradient. Since in this example analytical
solution is not available, we need to prescribe suitable initial data.
The initial condition
for the pressure is taken to be $p_0 = 1 - x$, which is compatible with
the prescribed boundary conditions. We then follow the procedure described in
Remark~\ref{rem:init} to obtain discrete initial data. In particular,
we set $p_h^0$ to be the $L^2$-projection of $p_0$ onto $W_h$ and solve
a mixed elasticity domain decomposition problem at $t = 0$ to obtain $\sigma_h^0$. We
note that this solve also gives $u_h^0$, $\gamma_h^0$, and $\lambda_h^{u,0}$.
In the case of the monolithic scheme where the time-differentiated elasticity
equation \eqref{eq:dd1-mfe1-dsc} is solved, the computed initial data is used to recover
 $u_h^n$, $\gamma_h^n$, and $\lambda_h^{u,n}$ using \eqref{init-recover}.
The computed solution
using the monolithic domain decomposition scheme is given in Figure
\ref{fig:Monolithic-Scheme:ex3}.  The solutions from the two split
methods look similar.

\begin{figure}[h]
  \includegraphics[width=0.33\columnwidth]{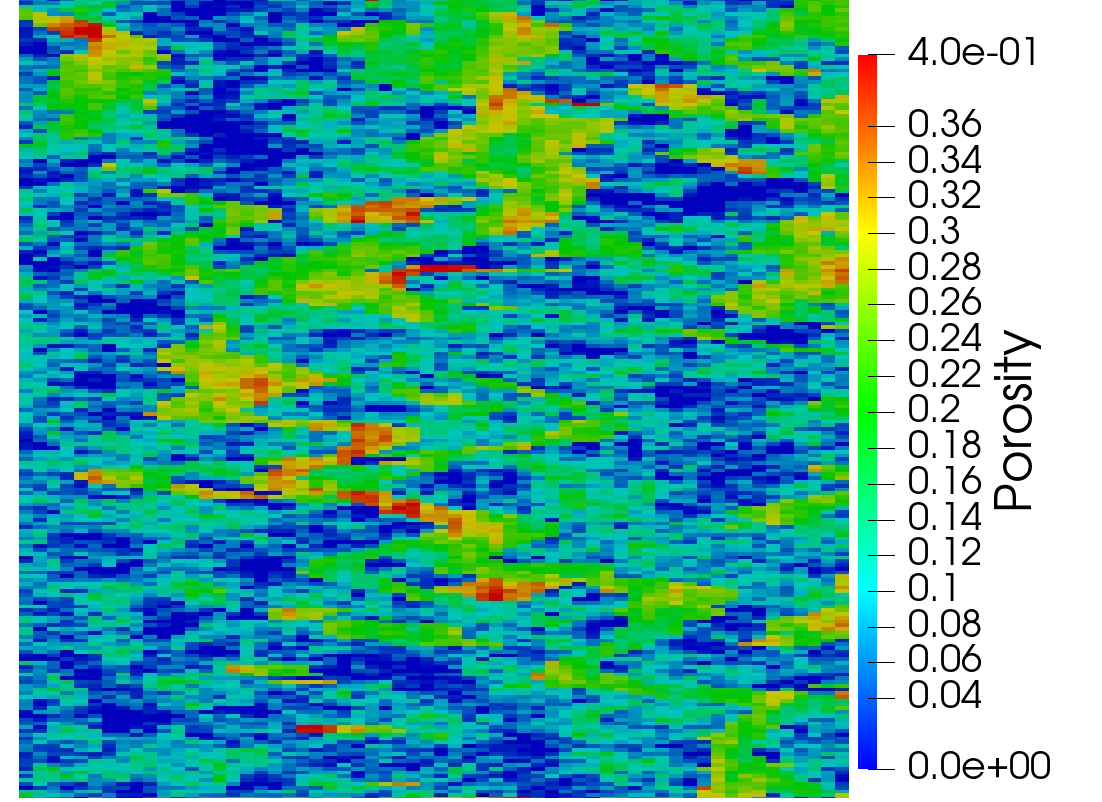}
  \includegraphics[width=0.33\columnwidth]{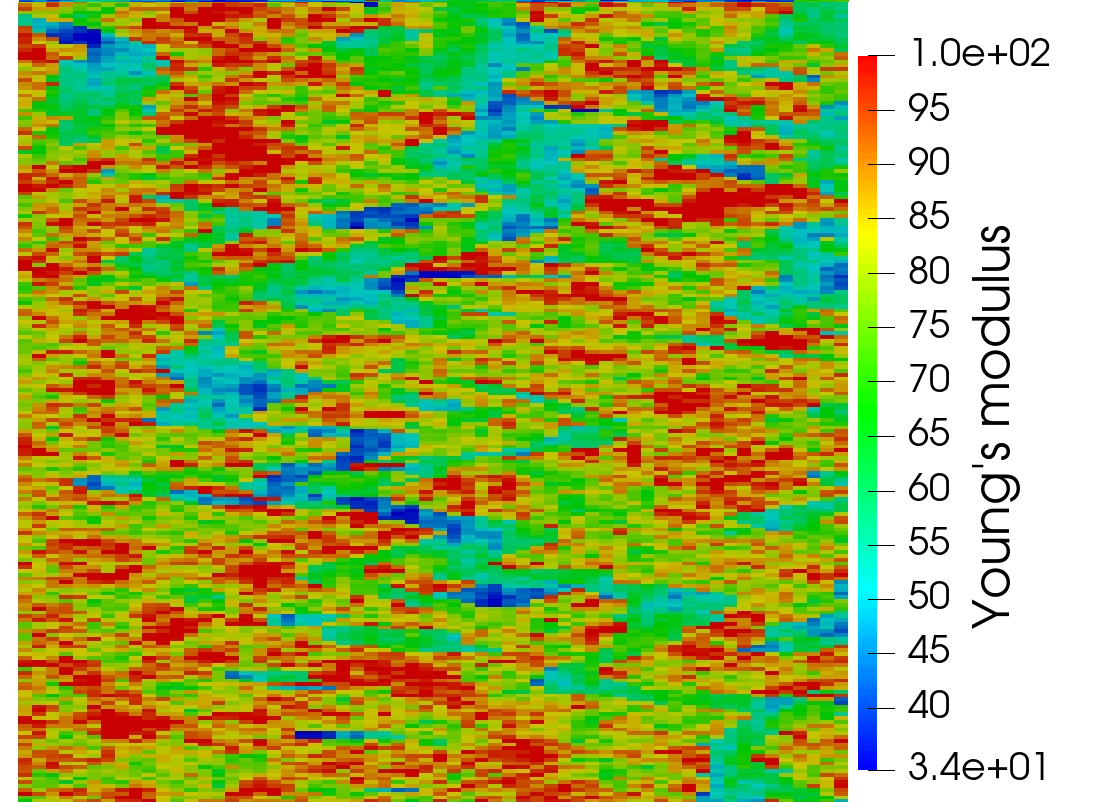}
  \includegraphics[width=0.33\columnwidth]{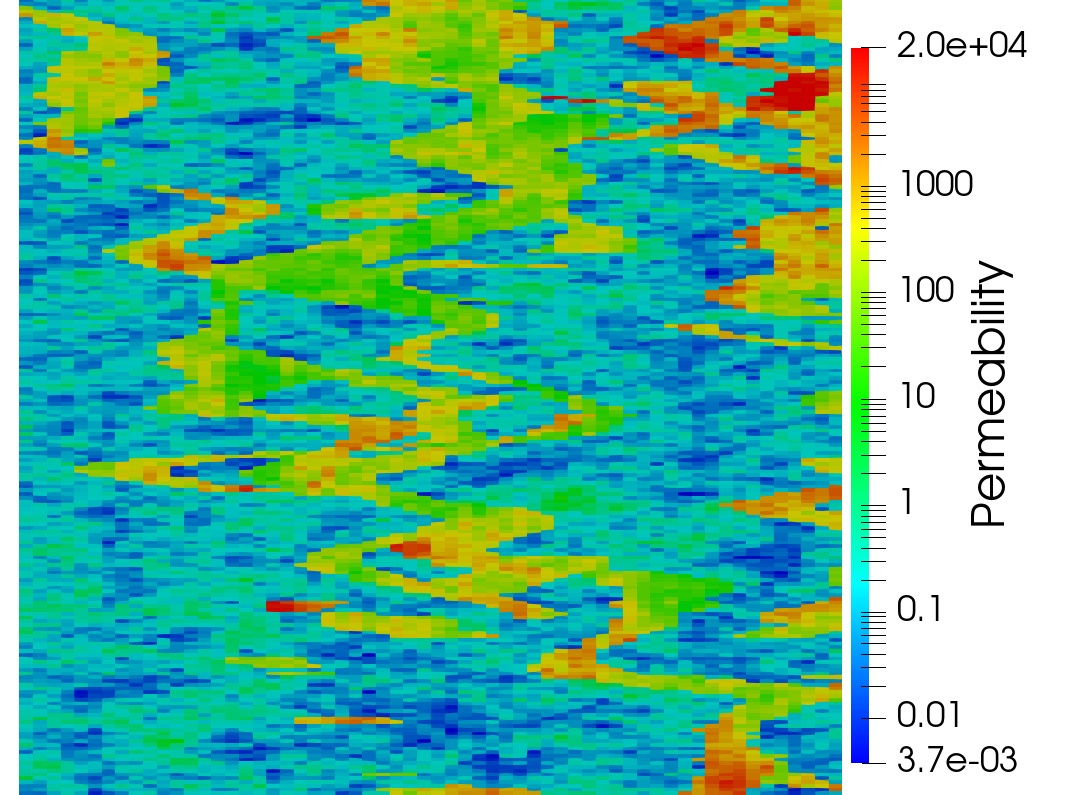}

\caption{Example 3, porosity, Young's modulus, permeability.
  \label{fig:Input-fields-for-example3}}
\end{figure}

\begin{table}[h]
  \centering{}
  \caption{Example 3, parameters (left) and boundary conditions (right)
    \label{tab:Physical-Parameters_ex3}.}
\begin{tabular}{c|c}
Parameter & Value\tabularnewline
\hline 
Mass storativity $(c_{0})$ & $1.0$\tabularnewline
Biot-Willis constant $(\alpha)$ & $1.0$\tabularnewline
Time step $(\Delta t)$ & $10^{-2}$\tabularnewline
Total time $(T)$ & $1.0$\tabularnewline
\end{tabular}
\hspace{5 mm}
\begin{tabular}{c|c|c|c|c}
Boundary & $\sigma$ & $u$ & $p$ & $z$\tabularnewline
\hline 
Left & $\sigma n=-\alpha p n$ & - & $1$ & -\tabularnewline
Bottom & $\sigma n=0$ & - & - & $z\cdot n=0$\tabularnewline
Right & $-$ & $0$ & $0$ & -\tabularnewline
Top & $\sigma n=0$ & - & - & $z\cdot n=0$\tabularnewline
\end{tabular}

\end{table}

\begin{figure}[h]
\begin{centering}
  \subfloat[Pressure]{\includegraphics[width=0.33\columnwidth]{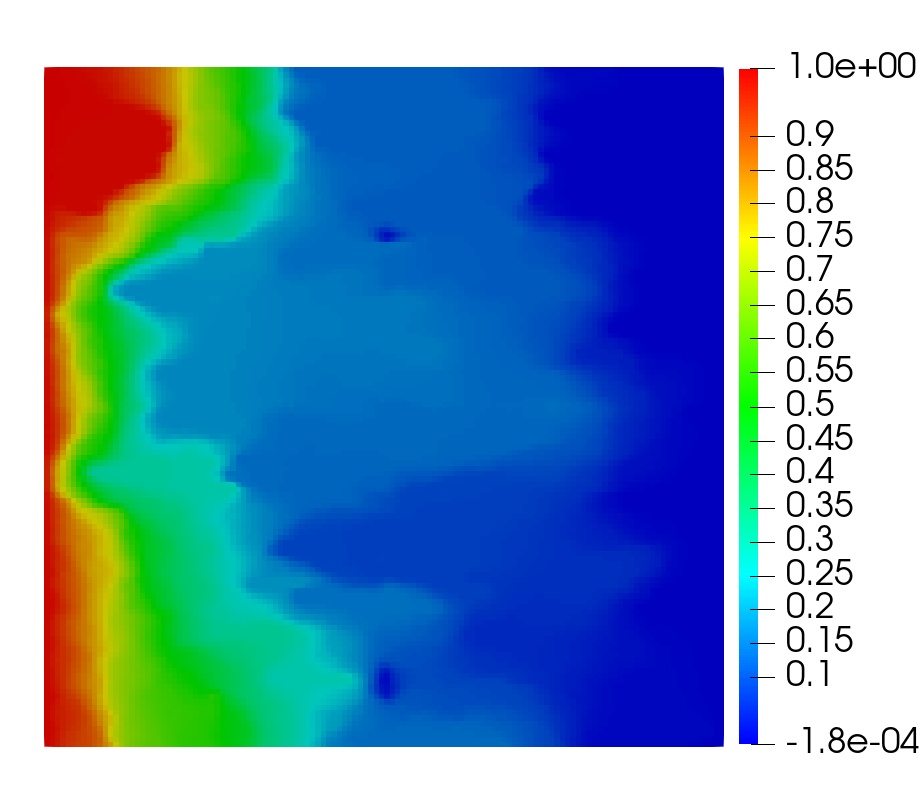}}
  \subfloat[Velocity]{\includegraphics[width=0.33\columnwidth]{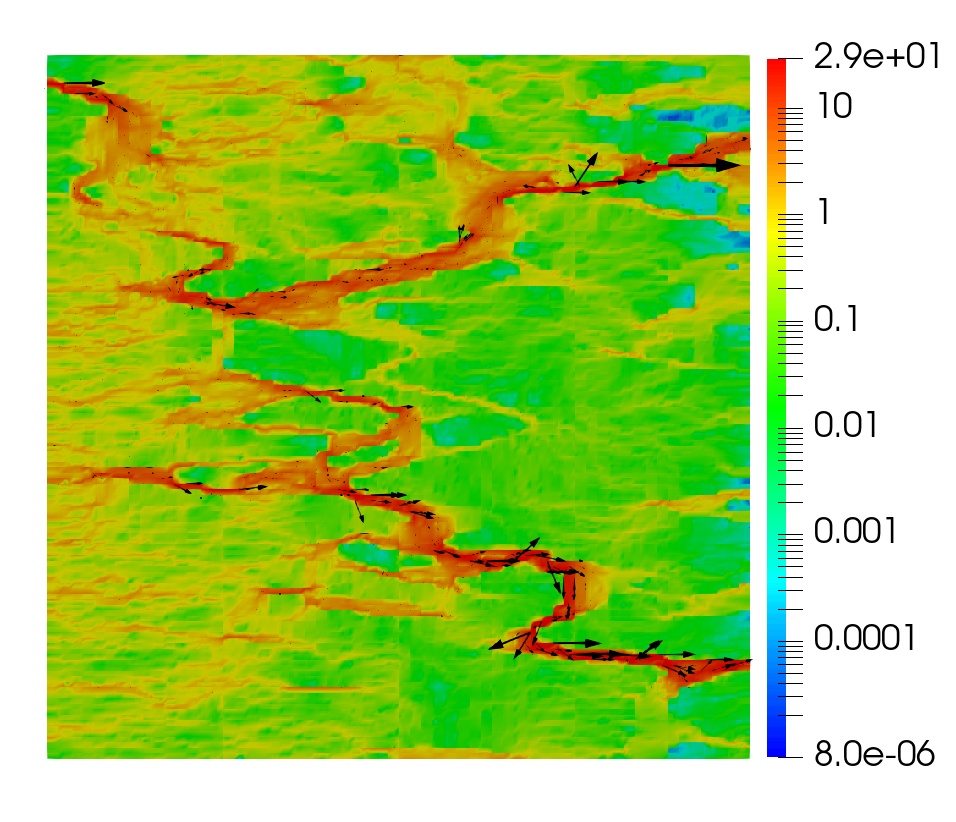}}
  \subfloat[Displacement]{\includegraphics[width=0.33\columnwidth]{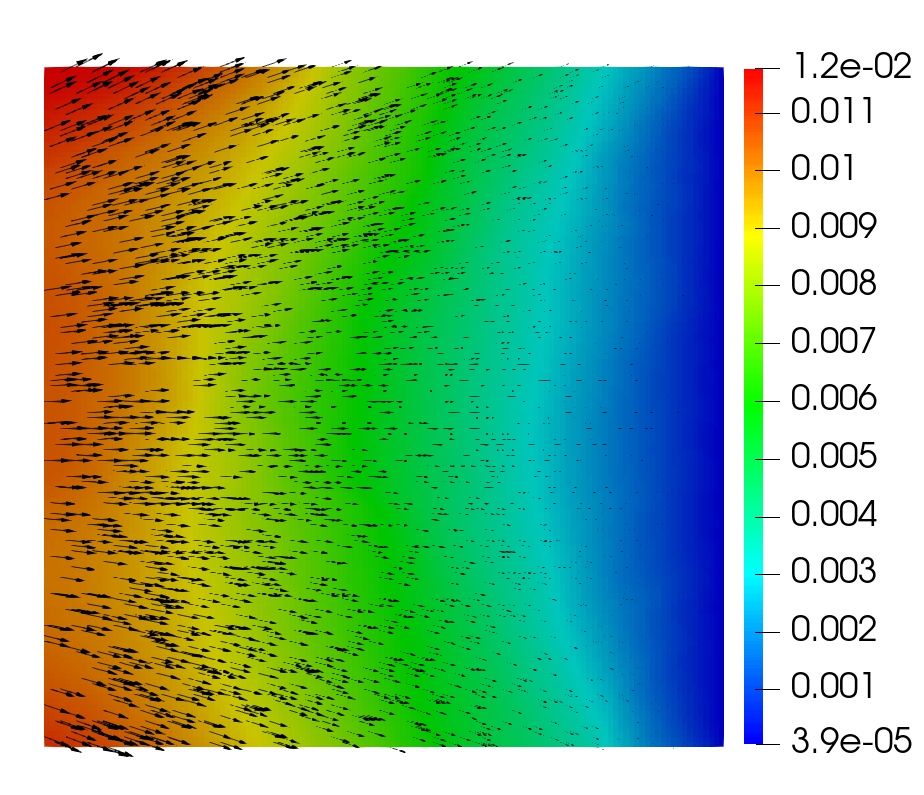}}\\
\par\end{centering}
$\quad\quad\quad\qquad\qquad$\subfloat[Stress x]{\begin{centering}
\includegraphics[width=0.33\columnwidth]{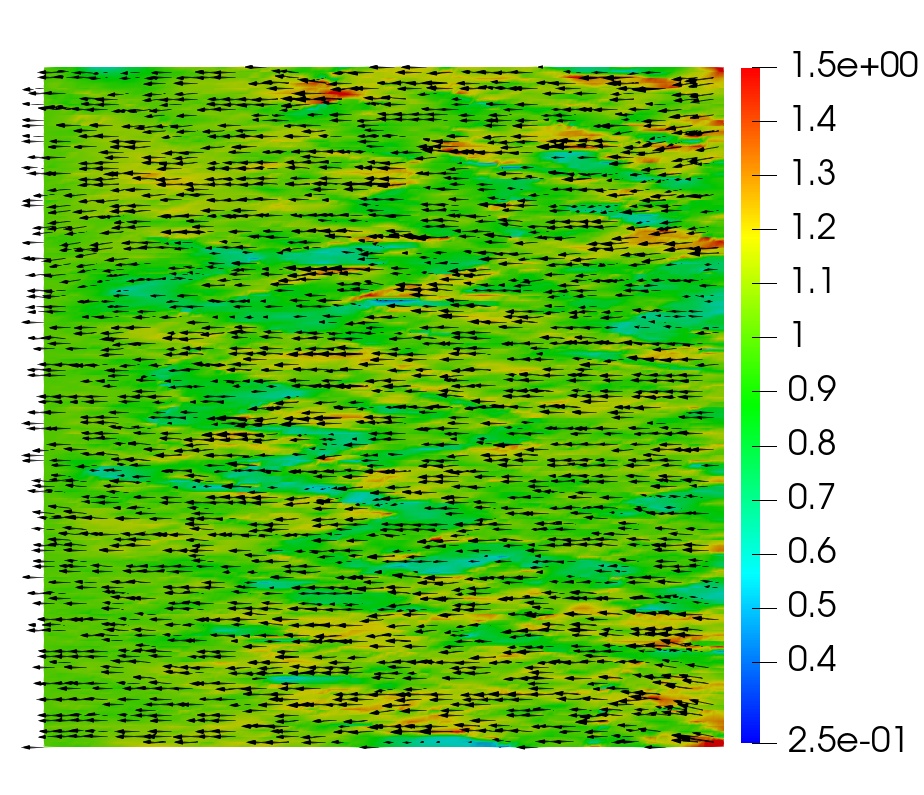}
\par\end{centering}
}\subfloat[Stress y]{
\begin{centering}
\includegraphics[width=0.33\columnwidth]{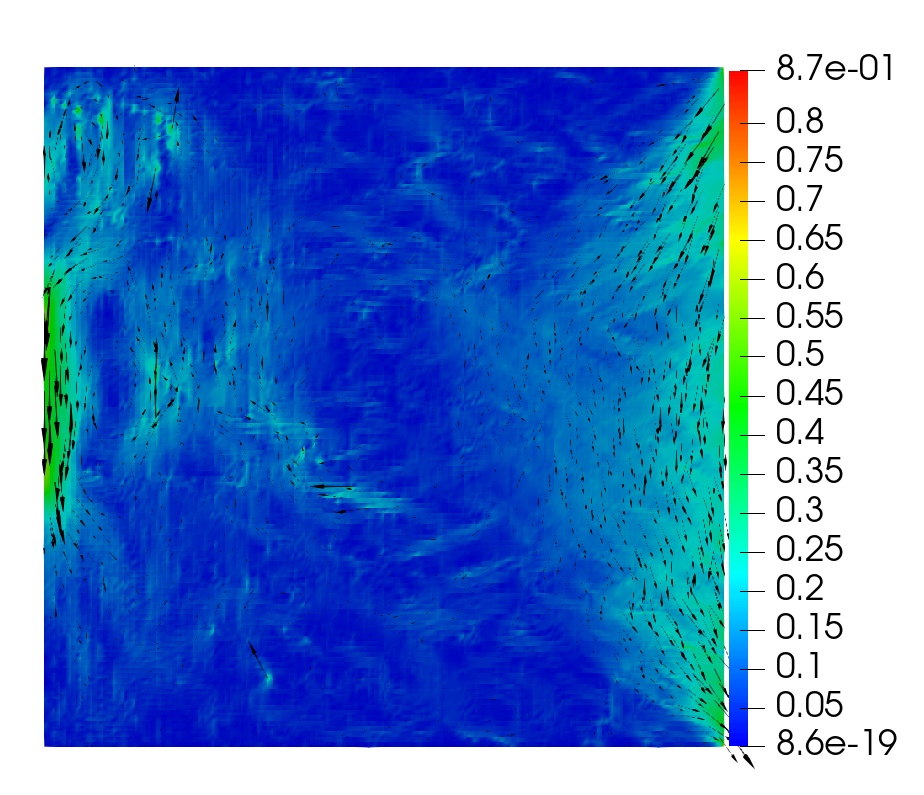}
\par\end{centering}
}

\caption{Example 3, computed solution at the final time using the monolithic domain decomposition scheme.
  \label{fig:Monolithic-Scheme:ex3}}
\end{figure}

In Table \ref{tab:Example-3,-Comparison}, we compare the average
number of interface iterations per time step in the three methods.
All three methods converge for this highly heterogeneous problem with
realistic physical parameters. While the three methods provide similar
solutions  and the number of interface iterations is comparable, the split methods are more efficient than the monolithic
method, due the less expensive CG iterations and single-physics subdomain solves. We
further note that in the split methods the Darcy interface solve is more
expensive than the elasticity one, which is likely due to the fact that the permeability
varies over seven orders of magnitude, affecting the condition number
of the interface operator.

\begin{table}[h]
  \caption{\label{tab:Example-3,-Comparison}Example 3, comparison of the number of
    interface iterations in the three methods.}

\centering{}%
\begin{tabular}{c|c|c|c|c|c}
 & \multicolumn{1}{c|}{Monolithic} & \multicolumn{2}{c|}{Drained Split} & \multicolumn{2}{c}{Fixed Stress}\tabularnewline
\hline 
$h$ & \#GMRES & \#CGElast & \#CGDarcy & \#CGElast & \#CGDarcy\tabularnewline
\hline 
$1/128$ & $565$ & $297$ & $464$ & $297$ & $464$\tabularnewline
\end{tabular}
\end{table}

\section{Conclusions}\label{sec:concl}

We presented three non-overlapping domain decomposition methods for
the Biot system of poroelasticity in a five-field fully mixed
formulation. The monolithic method involves solving an interface
problem for a composite displacement-pressure Lagrange multiplier,
which requires coupled Biot subdomain solves at each iteration. The
two split methods are based on the drained split and fixed stress
splittings. They involve two separate elasticity and Darcy interface
iterations requiring single-physics subdomain solves. We analyze the
spectrum of the monolithic interface operator and show unconditional
stability for the split methods. A series of numerical experiments
illustrate the efficiency, accuracy, and robustness of the three
methods. Our main conclusion is that while two approaches are comparable in terms of accuracy and number of interface iterations, the split methods are more computationally efficient than the monolithic method due to the less expensive CG iterations compared to GMRES and simpler single-physics subdomain solves compared to coupled Biot solves in the monolithic method.

\bibliographystyle{abbrv}
\addcontentsline{toc}{section}{\refname}
\bibliography{dd-biot}

\begin{thebibliography}{10}

\bibitem{Ahmed-FS-JCAM}
E.~Ahmed, J.~M. Nordbotten, and F.~A. Radu.
\newblock Adaptive asynchronous time-stepping, stopping criteria, and a
  posteriori error estimates for fixed-stress iterative schemes for coupled
  poromechanics problems.
\newblock {\em J. Comput. Appl. Math.}, 364:112312, 25, 2020.

\bibitem{Ahmed-apost-CMAME}
E.~Ahmed, F.~A. Radu, and J.~M. Nordbotten.
\newblock Adaptive poromechanics computations based on a posteriori error
  estimates for fully mixed formulations of {B}iot's consolidation model.
\newblock {\em Comput. Methods Appl. Mech. Engrg.}, 347:264--294, 2019.

\bibitem{Almani-multirate}
T.~Almani, K.~Kumar, A.~Dogru, G.~Singh, and M.~F. Wheeler.
\newblock Convergence analysis of multirate fixed-stress split iterative
  schemes for coupling flow with geomechanics.
\newblock {\em Comput. Methods Appl. Mech. Engrg.}, 311:180--207, 2016.

\bibitem{dealII90}
G.~Alzetta, D.~Arndt, W.~Bangerth, V.~Boddu, B.~Brands, D.~Davydov,
  R.~Gassmoeller, T.~Heister, L.~Heltai, K.~Kormann, M.~Kronbichler, M.~Maier,
  J.-P. Pelteret, B.~Turcksin, and D.~Wells.
\newblock The \texttt{deal.II} library, version 9.0.
\newblock {\em Journal of Numerical Mathematics}, 26(4):173--183, 2018.

\bibitem{Amara-Thomas}
M.~Amara and J.~M. Thomas.
\newblock Equilibrium finite elements for the linear elastic problem.
\newblock {\em Numer. Math.}, 33(4):367--383, 1979.

\bibitem{msmfe-quads}
I.~Ambartsumyan, E.~Khattatov, J.~M. Nordbotten, and I.~Yotov.
\newblock A multipoint stress mixed finite element method for elasticity on
  quadrilateral grids.
\newblock {\em Numer. Methods Partial Differential Equations},
  https://doi.org/10.1002/num.22624, 2020.

\bibitem{msfmfe-Biot}
I.~Ambartsumyan, E.~Khattatov, and I.~Yotov.
\newblock A coupled multipoint stress - multipoint flux mixed finite element
  method for the {Biot} system of poroelasticity.
\newblock {\em Comput. Methods Appl. Mech. Engrg.}, 372:113407, 2020.

\bibitem{arbogast2000mixed}
T.~Arbogast, L.~C. Cowsar, M.~F. Wheeler, and I.~Yotov.
\newblock Mixed finite element methods on nonmatching multiblock grids.
\newblock {\em SIAM J. Numer. Anal.}, 37(4):1295--1315, 2000.

\bibitem{ArnAwaQiu}
D.~N. Arnold, G.~Awanou, and W.~Qiu.
\newblock Mixed finite elements for elasticity on quadrilateral meshes.
\newblock {\em Adv. Comput. Math.}, 41(3):553--572, 2015.

\bibitem{arnold2007mixed}
D.~N. Arnold, R.~S. Falk, and R.~Winther.
\newblock Mixed finite element methods for linear elasticity with weakly
  imposed symmetry.
\newblock {\em Math. Comp.}, 76(260):1699--1723, 2007.

\bibitem{Awanou-rect-weak}
G.~Awanou.
\newblock Rectangular mixed elements for elasticity with weakly imposed
  symmetry condition.
\newblock {\em Adv. Comput. Math.}, 38(2):351--367, 2013.

\bibitem{BangerthHartmannKanschat2007}
W.~Bangerth, R.~Hartmann, and G.~Kanschat.
\newblock {deal.II} -- a general purpose object oriented finite element
  library.
\newblock {\em ACM Trans. Math. Softw.}, 33(4):24/1--24/27, 2007.

\bibitem{RaduFS1}
M.~Bause, F.~Radu, and U.~Kocher.
\newblock Space-time finite element approximation of the {Biot} poroelasticity
  system with iterative coupling.
\newblock {\em Comput. Methods Appl. Mech. Engrg.}, 320:745--768, 2017.

\bibitem{biot1941general}
M.~A. Biot.
\newblock General theory of three-dimensional consolidation.
\newblock {\em J. Appl. Phys.}, 12(2):155--164, 1941.

\bibitem{brezzi2008mixed}
D.~Boffi, F.~Brezzi, L.~F. Demkowicz, R.~G. Dur\'an, R.~S. Falk, and M.~Fortin.
\newblock {\em Mixed finite elements, compatibility conditions, and
  applications}, volume 1939 of {\em Lecture Notes in Mathematics}.
\newblock Springer-Verlag, Berlin; Fondazione C.I.M.E., Florence, 2008.

\bibitem{BBF-reduced}
D.~Boffi, F.~Brezzi, and M.~Fortin.
\newblock Reduced symmetry elements in linear elasticity.
\newblock {\em Commun. Pure Appl. Anal.}, 8(1):95--121, 2009.

\bibitem{RaduFS2}
M.~Borregales, K.~Kumar, F.~A. Radu, C.~Rodrigo, and F.~J. Gaspar.
\newblock A partially parallel-in-time fixed-stress splitting method for
  {Biot's} consolidation model.
\newblock {\em Comput. Math. Appl.}, 77(6):1466--1478, 2019.

\bibitem{RaduAnderson}
J.~W. Both, K.~Kumar, J.~M. Nordbotten, and F.~A. Radu.
\newblock Anderson accelerated fixed-stress splitting schemes for consolidation
  of unsaturated porous media.
\newblock {\em Comput. Math. Appl.}, 77(6):1479--1502, 2019.

\bibitem{brezzi1991mixed}
F.~Brezzi and M.~Fortin.
\newblock {\em Mixed and hybrid finite element methods}, volume~15 of {\em
  Springer Series in Computational Mathematics}.
\newblock Springer-Verlag, New York, 1991.

\bibitem{bukacSplit}
M.~Bukac, W.~Layton, M.~Moraiti, H.~Tran, and C.~Trenchea.
\newblock Analysis of partitioned methods for the {Biot} system.
\newblock {\em Numer. Methods Partial Differential Equations},
  31(6):1769--1813, 2015.

\bibitem{cockburn2010new}
B.~Cockburn, J.~Gopalakrishnan, and J.~Guzm\'an.
\newblock A new elasticity element made for enforcing weak stress symmetry.
\newblock {\em Math. Comp.}, 79(271):1331--1349, 2010.

\bibitem{cowsar1995balancing}
L.~C. Cowsar, J.~Mandel, and M.~F. Wheeler.
\newblock Balancing domain decomposition for mixed finite elements.
\newblock {\em Math. Comp.}, 64(211):989--1015, 1995.

\bibitem{Dauge}
M.~Dauge.
\newblock {\em Elliptic boundary value problems on corner domains}, volume 1341
  of {\em Lecture Notes in Mathematics}.
\newblock Springer-Verlag, Berlin, 1988.

\bibitem{FarFor}
M.~Farhloul and M.~Fortin.
\newblock Dual hybrid methods for the elasticity and the {S}tokes problems: a
  unified approach.
\newblock {\em Numer. Math.}, 76(4):419--440, 1997.

\bibitem{FlorezDD}
H.~Florez.
\newblock About revisiting domain decomposition methods for poroelasticity.
\newblock {\em Mathematics}, 6(10):187, 2018.

\bibitem{HoracioNurbs1}
H.~Florez and M.~Wheeler.
\newblock A mortar method based on {NURBS} for curved interfaces.
\newblock {\em Comput. Methods Appl. Mech. Engrg.}, 310:535--566, 2016.

\bibitem{Gaspar-FD-Biot}
F.~J. Gaspar, F.~J. Lisbona, and P.~N. Vabishchevich.
\newblock A finite difference analysis of {B}iot's consolidation model.
\newblock {\em Appl. Numer. Math.}, 44(4):487--506, 2003.

\bibitem{girault2011domain}
V.~Girault, G.~Pencheva, M.~F. Wheeler, and T.~Wildey.
\newblock Domain decomposition for poroelasticity and elasticity with {DG}
  jumps and mortars.
\newblock {\em Math. Mod. Meth. Appl. S.}, 21(01):169--213, 2011.

\bibitem{glowinski1988domain}
R.~Glowinski and M.~F. Wheeler.
\newblock Domain decomposition and mixed finite element methods for elliptic
  problems.
\newblock In {\em First international symposium on domain decomposition methods
  for partial differential equations}, pages 144--172, 1988.

\bibitem{gopalakrishnan2012second}
J.~Gopalakrishnan and J.~Guzm\'an.
\newblock A second elasticity element using the matrix bubble.
\newblock {\em IMA J. Numer. Anal.}, 32(1):352--372, 2012.

\bibitem{GosseletMonoDD}
P.~Gosselet, V.~Chiaruttini, C.~Rey, and F.~Feyel.
\newblock A monolithic strategy based on an hybrid domain decomposition method
  for multiphysic problems. {Application} to poroelasticity.
\newblock {\em Revue Europeenne des Elements Finis}, 13:523--534, 2012.

\bibitem{Zik-nonconf}
X.~Hu, C.~Rodrigo, F.~J. Gaspar, and L.~T. Zikatanov.
\newblock A nonconforming finite element method for the {B}iot's consolidation
  model in poroelasticity.
\newblock {\em J. Comput. Appl. Math.}, 310:143--154, 2017.

\bibitem{IpsenGMRES}
I.~C.~F. Ipsen.
\newblock Expressions and bounds for the {GMRES} residual.
\newblock {\em BIT Numerical Mathematics}, 40(3):524--535, 2000.

\bibitem{kelley1995iterative}
C.~T. Kelley.
\newblock {\em Iterative methods for linear and nonlinear equations}, volume~16
  of {\em Frontiers in Applied Mathematics}.
\newblock Society for Industrial and Applied Mathematics, Philadelphia, 1995.

\bibitem{eldar_elastdd}
E.~Khattatov and I.~Yotov.
\newblock Domain decomposition and multiscale mortar mixed finite element
  methods for linear elasticity with weak stress symmetry.
\newblock {\em ESAIM Math. Model. Numer. Anal.}, 53(6):2081--2108, 2019.

\bibitem{KimDS2011}
J.~Kim, H.~Tchelepi, and R.~Juanes.
\newblock Stability and convergence of sequential methods for coupled flow and
  geomechanics: Drained and undrained splits.
\newblock {\em Comput. Methods Appl. Mech. Engrg.}, 200:2094--2116, 2011.

\bibitem{KimFS2011}
J.~Kim, H.~Tchelepi, and R.~Juanes.
\newblock Stability and convergence of sequential methods for coupled flow and
  geomechanics: Fixed-stress and fixed-strain splits.
\newblock {\em Comput. Methods Appl. Mech. Engrg.}, 200:1591--1606, 2011.

\bibitem{kovavcik1999correlation}
J.~Kovacik.
\newblock Correlation between {Y}oung's modulus and porosity in porous
  materials.
\newblock {\em J. Mater. Sci. Lett.}, 18(13):1007--1010, 1999.

\bibitem{Lee-Biot-five-field}
J.~J. Lee.
\newblock Robust error analysis of coupled mixed methods for {B}iot's
  consolidation model.
\newblock {\em J. Sci. Comput.}, 69(2):610--632, 2016.

\bibitem{lee2016towards}
J.~J. Lee.
\newblock Towards a unified analysis of mixed methods for elasticity with
  weakly symmetric stress.
\newblock {\em Adv. Comput. Math.}, 42(2):361--376, 2016.

\bibitem{Lee-Biot-three-field}
J.~J. Lee.
\newblock Robust three-field finite element methods for {B}iot's consolidation
  model in poroelasticity.
\newblock {\em BIT}, 58(2):347--372, 2018.

\bibitem{Lee-Mardal-Winther}
J.~J. Lee, K.-A. Mardal, and R.~Winther.
\newblock Parameter-robust discretization and preconditioning of {B}iot's
  consolidation model.
\newblock {\em SIAM J. Sci. Comput.}, 39(1):A1--A24, 2017.

\bibitem{Mathew}
T.~P. Mathew.
\newblock {\em Domain decomposition and iterative refinement methods for mixed
  finite element discretizations of elliptic problems}.
\newblock PhD thesis, Courant Institute of Mathematical Sciences, New York
  University, 1989.
\newblock Tech. Rep. 463.

\bibitem{Mikelic2013}
A.~Mikeli{\'{c}} and M.~F. Wheeler.
\newblock Convergence of iterative coupling for coupled flow and geomechanics.
\newblock {\em Comput. Geosci.}, 17(3):455--461, 2013.

\bibitem{murad1992improved}
M.~A. Murad and A.~F.~D. Loula.
\newblock Improved accuracy in finite element analysis of {B}iot's
  consolidation problem.
\newblock {\em Comput. Methods Appl. Mech. Engrg.}, 95(3):359--382, 1992.

\bibitem{Jan-SINUM-Biot}
J.~M. Nordbotten.
\newblock Stable cell-centered finite volume discretization for {B}iot
  equations.
\newblock {\em SIAM J. Numer. Anal.}, 54(2):942--968, 2016.

\bibitem{ORB}
R.~Oyarz{\'u}a and R.~Ruiz-Baier.
\newblock Locking-free finite element methods for poroelasticity.
\newblock {\em SIAM J. Numer. Anal.}, 54(5):2951--2973, 2016.

\bibitem{phillips2007coupling1}
P.~J. Phillips and M.~F. Wheeler.
\newblock A coupling of mixed and continuous {G}alerkin finite element methods
  for poroelasticity. {I}. {T}he continuous in time case.
\newblock {\em Comput. Geosci.}, 11(2):131--144, 2007.

\bibitem{phillips-DG}
P.~J. Phillips and M.~F. Wheeler.
\newblock A coupling of mixed and discontinuous {G}alerkin finite-element
  methods for poroelasticity.
\newblock {\em Comput. Geosci.}, 12(4):417--435, 2008.

\bibitem{QV}
A.~Quarteroni and A.~Valli.
\newblock {\em Domain Decomposition Methods for Partial Differential
  equations}.
\newblock Clarendon Press, Oxford, 1999.

\bibitem{Zik-MINI}
C.~Rodrigo, F.~J. Gaspar, X.~Hu, and L.~T. Zikatanov.
\newblock Stability and monotonicity for some discretizations of the {B}iot's
  consolidation model.
\newblock {\em Comput. Methods Appl. Mech. Engrg.}, 298:183--204, 2016.

\bibitem{Zik-stab}
C.~Rodrigo, X.~Hu, P.~Ohm, J.~H. Adler, F.~J. Gaspar, and L.~T. Zikatanov.
\newblock New stabilized discretizations for poroelasticity and the {S}tokes'
  equations.
\newblock {\em Comput. Methods Appl. Mech. Engrg.}, 341:467--484, 2018.

\bibitem{SHOWALTER2000310}
R.~E. Showalter.
\newblock Diffusion in poro-elastic media.
\newblock {\em J. Math. Anal. Appl.}, 251(1):310 -- 340, 2000.

\bibitem{StarkeFOV:97}
G.~Starke.
\newblock Field-of-values analysis of preconditioned iterative methods for
  nonsymmetric elliptic problems.
\newblock {\em Numer. Math.}, 78(1):103--117, 1997.

\bibitem{stenberg1988family}
R.~Stenberg.
\newblock A family of mixed finite elements for the elasticity problem.
\newblock {\em Numer. Math.}, 53(5):513--538, 1988.

\bibitem{RaduFS3}
E.~Storvik, J.~W. Both, K.~Kumar, J.~M. Nordbotten, and F.~A. Radu.
\newblock On the optimization of the fixed-stress splitting for {Biot's}
  equations.
\newblock {\em Int. J. Numer. Methods. Eng.}, 120(2):179--194, 2019.

\bibitem{Toselli-Widlund}
A.~Toselli and O.~Widlund.
\newblock {\em Domain decomposition methods---algorithms and theory}, volume~34
  of {\em Springer Series in Computational Mathematics}.
\newblock Springer-Verlag, Berlin, 2005.

\bibitem{DarcyDDCOndition}
D.~Vassilev, C.~Wang, and I.~Yotov.
\newblock Domain decomposition for coupled {Stokes} and {Darcy} flows.
\newblock {\em Comput. Methods. Appl. Mech. Eng.}, 268:264--283, 2014.

\bibitem{Whe-Xue-Yot-Biot}
M.~F. Wheeler, G.~Xue, and I.~Yotov.
\newblock Coupling multipoint flux mixed finite element methods with continuous
  {G}alerkin methods for poroelasticity.
\newblock {\em Comput. Geosci.}, 18(1):57--75, 2014.

\bibitem{Yi-Biot-nonconf}
S.-Y. Yi.
\newblock A coupling of nonconforming and mixed finite element methods for
  {B}iot's consolidation model.
\newblock {\em Numer. Meth. Partial. Differ. Equ.}, 29(5):1749--1777, 2013.

\bibitem{Yi-Biot-mixed}
S.-Y. Yi.
\newblock Convergence analysis of a new mixed finite element method for
  {B}iot's consolidation model.
\newblock {\em Numer. Meth. Partial. Differ. Equ.}, 30(4):1189--1210, 2014.

\bibitem{Yi-Biot-locking}
S.-Y. Yi.
\newblock A study of two modes of locking in poroelasticity.
\newblock {\em SIAM J. Numer. Anal.}, 55(4):1915--1936, 2017.

\bibitem{YiSplit}
S.-Y. Yi and M.~Bean.
\newblock Iteratively coupled solution strategies for a four-field mixed finite
  element method for poroelasticity.
\newblock {\em Int. J. Numer. Anal. Meth. Geomech.}, 2016.

\end{thebibliography}

\end{document}